\pgfplotsset{compat=newest}
\numberwithin{equation}{section}
\newtheorem{theorem}{Theorem}[section]
\newtheorem*{theorem7}{Theorem~\ref{thm:cald}}%[section]
\newtheorem{lemma}{Lemma}[section]
\newtheorem{proposition}{Proposition}[section]
\newtheorem{corollary}{Corollary}[section]
\theoremstyle{definition}
\newtheorem{definition}{Definition}[section]
\newtheorem{remark}{Remark}[section]
\newtheorem{example}{Example}[section]
\newtheorem{hypothesis}{Hypothesis}[section]
\newcommand{\set}[2]{\left\{#1 \; :\; #2\right\}} % Set (two arguments)
\newcommand{\prodin}[2]{\left \langle #1,#2 \right\rangle} % Scalar product (two arguments)
\DeclareMathOperator*{\diam}{diam} %Diameter of a set
\DeclareMathOperator*{\dist}{dist} %Distance function
\DeclareMathOperator*{\conv}{conv}
\DeclareMathOperator*{\ran}{ran}
\DeclareMathOperator*{\diver}{div}
\newcommand{\R}{\mathbb R} % Real numbers
\newcommand{\N}{\mathbb N} % Natural numbers
\newcommand{\F}{\mathcal F}
\renewcommand{\L}{\mathcal L}
\newcommand{\A}{\mathcal A}
\renewcommand{\S}{\mathbb{S}}
\newcommand{\eps}{{\varepsilon}}
\newcommand{\Norm}[1]{\|\hspace{-1pt}|#1\|\hspace{-1pt}| }
\newcommand{\V}{\mathcal V}
\newcommand{\C}{\mathbb C}
\renewcommand{\P}{\mathcal P}
\begin{document}
%%%%%%%%%%%%%%%%%%%%%%%%%%%%%%%%%%%%

\title{Inverse problems on low-dimensional manifolds}

\author{Giovanni S. Alberti}
\address{MaLGa Center, Department of Mathematics, University of Genoa, Via Dodecaneso 35, 16146 Genova, Italy.}
\email{giovanni.alberti@unige.it}

\author{\'Angel Arroyo}
\address{MOMAT Research Group, Interdisciplinary Mathematics Institute, Department of Applied Mathematics and Mathematical Analysis, Universidad Complutense de Madrid, 28040 Madrid, Spain.}
\email{ar.arroyo@ucm.es}

\author{Matteo Santacesaria}
\address{MaLGa Center, Department of Mathematics, University of Genoa, Via Dodecaneso 35, 16146 Genova, Italy.}
\email{matteo.santacesaria@unige.it}

%\date{April 23, 2022} %DELETE THIS IN THE FINAL VERSION

\subjclass[2020]{35R30, 58C25}

\keywords{Inverse problems, Calder\'on problem, Gel'fand-Calder\'on problem, machine learning, manifolds,
global uniqueness, Lipschitz stability, reconstruction algorithm} 
\thanks{This work has been carried out at the Machine Learning Genoa (MaLGa) center, Universit\`a di Genova (IT). The authors are members of GNAMPA, INdAM. This material is based upon work supported by the Air Force Office of Scientific Research under award number FA8655-20-1-7027. \'AA is  partially supported by the grants MTM2017-85666-P and 2017 SGR 395.}

\begin{abstract}

We consider abstract inverse problems between infinite-dimen\-sio\-nal Banach spaces. These inverse problems are typically nonlinear and ill-posed, making the inversion with limited and noisy measurements a delicate process. In this work, we assume that the unknown belongs to a finite-dimensional manifold: this assumption arises in many real-world scenarios where natural objects have a low intrinsic dimension and  belong to a certain submanifold of a much larger ambient space. We prove uniqueness and  H\"older and Lipschitz stability results in this general setting, also in the case when only a finite discretization of the measurements is available. Then, a Landweber-type reconstruction algorithm from a finite number of measurements is proposed, for which we prove global convergence, thanks to a new criterion for finding a suitable initial guess. 

These general results are then applied to several examples, including two classical nonlinear ill-posed inverse boundary value problems. The first is Calder\'on's inverse conductivity problem, for which we prove a Lipschitz stability estimate from a finite number of measurements for piece-wise constant conductivities with discontinuities on an unknown triangle. A similar stability result is then obtained for Gel'fand-Calder\'on's problem for the Schr\"odinger equation, in the case of piece-wise constant potentials with discontinuities on a finite number of non-intersecting balls.

\end{abstract}

\maketitle

\setcounter{tocdepth}{1}
\tableofcontents

%%%%%%%%%%%%%%%%%%%%%%%%%%%%%%%%%%%%
\section{Introduction}
%%%%%%%%%%%%%%%%%%%%%%%%%%%%%%%%%%%%

We consider the problem of inverting the operator equation
\[
F(x) = y,
\]
where $F$ is a possibly nonlinear map between Banach spaces modeling a measurement (forward) operator, $x$ is an unknown quantity to be recovered and $y$ is the measured data.
This serves as a model of well-known inverse problems such as computed tomography, magnetic resonance, ultrasonography, and inverse problems for partial differential equations (PDE).

In an ideal setting, with infinite-precision measurements and no noise, many important inverse problems can be solved -- i.e.\ there is a unique solution -- and stability holds, possibly in a very weak form, for the reconstruction. However, in real applications, the measurements are affected by noise and only a finite-dimensional projection can be acquired. Another issue is the ill-posedness of the map $F$, that can amplify the noise in the measurements if not properly taken into account. This can be observed in the stability estimates, which may be of logarithmic type \cite{alessandrini1988} when no particular assumptions on the unknown are imposed: this is an intrinsic phenomenon of some ill-posed problems \cite{mandache2001} and it reflects in poor numerical reconstructions. On the other hand, Lipschitz and H\"older stability estimates have great impact on applications, since they allow for good numerical reconstructions \cite{dehoop2012}. It is therefore a fundamental question to find explicit conditions on the problem that guarantee good stability properties.

There is a wide literature on Lipschitz stability results for nonlinear ill-posed inverse problems (mostly inverse boundary value problems for PDE) under the assumption that the unknown belongs to a known finite-dimensional subspace or submanifold of a Banach space \cite{2005-alessandrini-vessella,2011-beretta-francini,beretta2013,beretta2015,gaburro2015,beretta2017,alessandrini2017,
alessandrini2017u,alessandrini2018,beretta2019,2020-ruland-sincich,beretta2020preprint}. All these results require infinitely many measurements, even though the number of degrees of freedom to recover are finite. 
Similar results have been obtained with a finite number of measurements, mostly regarding inverse boundary value problems \cite{friedman1989,alberti2017infinite,alberti2018,Harrach_2019,ruland2018,ALB-SAN,harrach2019uniqueness,alberti2020calderon,Liu_2020} and scattering problems in the case when the unknown has a periodic, polygonal or polyhedral structure  \cite{cheng2003,alessandrini2005b,bao2011,hu2016,blaasten2016,blaasten2017,liu2017}.

In this paper, we consider an abstract setting in which most of these works fall in: we assume the 
unknown to belong, or to be close, to a finite-dimensional submanifold $M$ of a Banach space. This setup arises in many real-world scenarios where the objects of interest have a low intrinsic dimensionality compared to the large dimension of the ambient space \cite{2009-peyre}. Priors expressed with manifolds are very popular in machine learning, for example in manifold learning and nonlinear dimensionality reduction \cite{2007-lee-verleysen,2008-lin-zha,2009-baraniuk-wakin,bourgain2015toward}, and especially when machine learning is applied to solving inverse problems \cite{2017-adler-oktem,2017-mccann-jin-unser,2018-lucas-etal,2019-arridge-etal}. In particular, the methods developed in \cite{hyun2020deep} are based on the so-called $\mathcal{M}$-RIP property, which is exactly the Lipschitz stability estimate we derive in this work. We note that low-dimensional manifolds are also used for image denoising \cite{osher2017}.

Under this assumption we are able to obtain a general H\"older and Lipschitz stability result with infinite-precision measurements and also a similar result from a finite number of measurements. 
The main finding is that the ill-posedness of an inverse problem can be mitigated  by imposing \textit{nonlinear} constraints. More precisely, we only require the unknown to belong to a $C^1$ manifold of a Banach space $X$, not necessarily embedded in $X$ but only satisfying some H\"older or Lipschitz embedding properties, as described in the next section. Most of the conditions on the map $F$ and on the $C^1$ manifold are sharp, as we show with many examples and counterexamples. The approach is based on the inverse function theorem, by using the differential of $F$, as in many of the papers mentioned above, which are often based on the use of the Fr\'echet derivative of $F$ composed with the parametrisation of the manifold. Thus, the results of this work may be seen as a generalisation of those ad-hoc methods to the abstract manifold setting.
The particular case in which $M$ is a finite-dimensional subspace was studied in  \cite{ALB-SAN}, by using some ideas appeared in Lipschitz stability results for nonlinear ill-posed problems \cite{bacchelli2006,stefanov-uhlmann-2009,BOU} combined with finite-dimensional approximations used in \cite{alberti2018,harrach2019uniqueness}.

We then apply these general stability results to two classical nonlinear ill-posed problems: Calder\'on's inverse conductivity problem and Gel'fand-Calder\'on problem for the Schr\"odinger equation. Both are inverse boundary value problems that suffer from exponential instability \cite{mandache2001,isaev2011,koch2020}, but there are very few Lipschitz stability results for these problems in case the unknown conductivity/potential belongs to a finite-dimensional manifold \cite{beretta2008,beretta2015,beretta2019,aspri2022lipschitz}. Thanks to our abstract stability estimates, we are now able to include the known results as part of a general framework, where precise conditions on the finite-dimensional manifold are crucial and stability can be obtained also with a discretization of the measurements. 

For the Calder\'on problem, we consider piece-wise constant conductivities with discontinuities on a triangle, with a finite number of measurements. The same argument extends to more general polygons and  is based on a similar stability results in the case where the full infinite-dimensional measurements are available \cite{beretta2019}. We then consider the Gel'fand-Calder\'on problem for piecewise constant potentials with discontinuities on a finite number of non-intersecting balls. To the best of our knowledge, this setting has never been considered, and thus the Lipschitz stability we obtain is new also for the case of full boundary measurements. Another novelty consists in the fact that
not only is the location and size of each ball unknown, but also the values of the potential in each ball are to be determined and they belong to a continuous interval. The key step to achieve this improvement was the use of the Runge approximation property for solutions of elliptic equations.

This work is structured as follows. In \Cref{sec:Lip} we present the framework and the main stability results. Many examples and counterexamples are given to motivate the assumptions. The Lipschitz stability results for Calder\'on's and Gel'fand-Calder\'on's problems from a finite number of measurements are presented in Section \ref{sec:appl}. In  \Cref{sec:rec} we design a globally convergent reconstruction algorithm based on the Lipschitz stability estimate and on \cite{dehoop2012}.
In \Cref{sec:exa} we present several examples of manifolds in Banach spaces that verify the assumptions of the stability estimates. Moreover, as an application, we obtain a Lipschitz stability estimate for the ill-posed inverse problem of differentiation. Sections~\ref{sec:proofs-1} and \ref{sec:pfII} are devoted to the proofs of the main abstract results.
Sections \ref{sec:cald} and \ref{sec:gelf} contain the proofs of the stability results for the two inverse problems presented in Section \ref{sec:appl}. Section~\ref{sec:conclusion} is devoted to concluding remarks and open questions.
In \Cref{sec:tangent} we recall some basic concepts related to the tangent space of a manifold and the differential of maps between manifolds. Finally, \Cref{sec:estimates} contains some technical estimates regarding the size of the symmetric difference between balls in $\R^d$.

%%%%%%%%%%%%%%%%%%%%%%%%%%%%%%%%%%%%
\section{Abstract stability results} \label{sec:Lip}
%%%%%%%%%%%%%%%%%%%%%%%%%%%%%%%%%%%%

Let $X$ and $Y$ be Banach spaces with norms $\|\cdot\|_X$ and $\|\cdot\|_Y$, and let $A\subseteq X$ be an open set. In this work, we focus on solving the inverse problem
\[
F(x)=y,
\]
where $F\colon A\to Y$ is the forward map, possibly nonlinear. As discussed in the introduction, we assume $x\in M$, for a certain known manifold $M\subseteq X$, as detailed below.

We always assume that $F$ is  \emph{Fr\'echet differentiable}, namely,  for every $x\in A$ 
there exists a bounded linear operator $F'(x)\in\L(X,Y)$ such that
\begin{equation*}
				\lim_{y\to x}\frac{\|F(y)-F(x)-F'(x)(y-x)\|_Y}{\|y-x\|_X}
				=
				0,
\end{equation*}
and that the map $F'\colon A\to\L(X,Y)$ is continuous: in short, we write $F\in C^1(A,Y)$.

It is worth clarifying why the results of this paper are most meaningful for low-dimensional manifolds $M$. Indeed, they will involve a stability constant $C>0$ and, for those in \Cref{sub:finite}, a parameter $N\in\N$ quantifying the number of measurements needed for stability. Both these constants depend on the manifold $M$, and  grow as $\dim M$ grows, for ill-posed problems;  the dependence on $\dim M$ is typically of exponential type for severely ill-posed problems \cite{rondi2006,beretta2013,beretta2017,alberti2018,ALB-SAN}. Therefore, since in practice small values of $C$ and $N$ are needed, these estimates are interesting for low-dimensional manifolds $M$.

%%%%%%%%%%%%%%%%%%%%%%%%%%%%%%%%%%%%
\subsection{Stability with infinite measurements}\label{subsec:infinity-meas}
%%%%%%%%%%%%%%%%%%%%%%%%%%%%%%%%%%%%

Our starting point is a Lipschitz stability result that can be found in \cite[Proposition 5]{bacchelli2006}, \cite[Theorem~2]{stefanov-uhlmann-2009} and \cite[Theorem 2.1]{BOU}, where sufficient conditions for Lipschitz stability are provided in the case in which $M=W\subseteq X$ is an $n$-dimensional subspace and $x\in K$, where $K\subseteq W\cap A$ is a known compact and convex subset.  
However, the convexity of $K$ is not a reasonable assumption towards a generalization of this result to a differentiable manifold $M\subseteq X$. In our first result we show that the convexity of $K$ can actually be dropped from the assumptions of \cite[Theorem 2.1]{BOU}. Indeed, the property of a line segment being included in $K$ is needed only in the case in which the endpoints are close enough. The following result is then obtained by constructing a suitable neighbourhood of $K$ which contains all line segments up to a certain length.

\begin{theorem}\label{THM infinite measurements without compactness}
Let $X$ and $Y$ be Banach spaces, $A\subseteq X$ be an open set, $W\subseteq X$ be an $n$-dimensional subspace and $K\subseteq W\cap A$ be a compact subset. Consider a Fr\'echet differentiable map $F\in C^1(A,Y)$ satisfying that
\begin{enumerate}
\item $F\big|_K$ is injective;
\item $F'(x)\in\mathcal{L}(X,Y)$ is injective on $W$ for every $x\in W\cap A$.
\end{enumerate}
Then there exists $C>0$ such that
\begin{equation}\label{Lip-stability-1}
				\|x-y\|_X \le C  \|F(x)-F(y)\|_Y,\qquad x,y\in K.
\end{equation}
\end{theorem}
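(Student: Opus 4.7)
The plan is to dichotomize on the distance $\|x-y\|_X$: for close pairs I will use a first-order Taylor expansion together with uniform injectivity of $F'$, while for far pairs I will use compactness of $K\times K$ together with injectivity of $F|_K$. The only obstacle beyond the classical argument of \cite{BOU} is that the Taylor expansion needs the whole segment $[x,y]$ to sit inside an open set where $F'$ is uniformly bounded below on $W$; convexity of $K$ is the standard way to guarantee this, and the task is to replace it by a suitably thickened compact neighbourhood of $K$.

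For the local step, I would introduce the closed $\rho$-neighbourhood $K_\rho:=\{w\in W:\dist_X(w,K)\le\rho\}$ of $K$ in $W$. Since $W$ is finite-dimensional and $K$ is compact, $K_\rho$ is compact; since $A$ is open and $K\subseteq A$, for $\rho$ small enough $K_\rho\subseteq W\cap A$. The key property is that whenever $x,y\in K$ satisfy $\|x-y\|_X\le\rho$, the entire segment $[x,y]$ lies in $K_\rho$, because every $z=y+t(x-y)$ with $t\in[0,1]$ satisfies $\|z-y\|_X\le\|x-y\|_X\le\rho$. Now assumption (2), together with finite-dimensionality of $W$, gives for each $w\in K_\rho$ a pointwise lower bound $\|F'(w)v\|_Y\ge c(w)\|v\|_X$ for $v\in W$; by continuity of $F'$ and compactness of $K_\rho$ this improves to a uniform bound $\|F'(w)v\|_Y\ge c_0\|v\|_X$ for some $c_0>0$. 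Uniform continuity of $F'$ on $K_\rho$ then gives $\delta\in(0,\rho]$ such that $\|F'(w_1)-F'(w_2)\|_{\L(X,Y)}<c_0/2$ whenever $\|w_1-w_2\|_X<\delta$. For $x,y\in K$ with $\|x-y\|_X<\delta$, the fundamental theorem of calculus, valid since $[x,y]\subseteq K_\rho\subseteq A$, yields
\[
F(x)-F(y)=\int_0^1F'(y+t(x-y))(x-y)\,dt,
\]
and hence $\|F(x)-F(y)-F'(y)(x-y)\|_Y\le(c_0/2)\|x-y\|_X$. Combined with $\|F'(y)(x-y)\|_Y\ge c_0\|x-y\|_X$, this gives $\|F(x)-F(y)\|_Y\ge(c_0/2)\|x-y\|_X$ in the local regime.

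For the global step, consider the compact set $D:=\{(x,y)\in K\times K:\|x-y\|_X\ge\delta\}$. The continuous function $(x,y)\mapsto\|F(x)-F(y)\|_Y$ is strictly positive on $D$ by injectivity of $F|_K$, hence attains a positive minimum $m>0$, so $\|F(x)-F(y)\|_Y\ge m\ge(m/\diam K)\|x-y\|_X$ on $D$. Setting $C:=\max(2/c_0,\diam(K)/m)$ then yields \eqref{Lip-stability-1} in both regimes.

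The only non-routine ingredient is the construction of $K_\rho$: one must choose $\rho$ small enough that $K_\rho\subseteq A$ while large enough to absorb segments joining nearby points of $K$, and then observe that the local Lipschitz argument has to be calibrated so that $\delta\le\rho$. The finite-dimensionality of $W$ is essential, both for compactness of $K_\rho$ and for the uniform lower bound on $F'|_W$.
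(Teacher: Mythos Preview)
Your proof is correct and follows essentially the same approach as the paper. The paper isolates the construction of the thickened neighbourhood as a separate lemma (defining $\widehat K=\overline{B_X(K,\delta_K)}\cap W$, which is your $K_\rho$) and the far-pair compactness argument as another lemma, but the analytic content---fundamental theorem of calculus along segments contained in $\widehat K$, uniform lower bound on $F'|_W$ via compactness, and uniform continuity of $F'$ to control the remainder---is identical to yours.
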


It is worth observing that the assumption on the continuity of $F'$ may not be dropped, as shown in the following example.
\begin{example}
Let us take $A=X=Y=\R$, $K=[0,1]$ and $F=f\colon\R\to\R$ the function defined as
\begin{equation*}
		f(x)
		:\,=
		\begin{cases}
			\displaystyle x^2\Big(\operatorname{sign}(x)+\sin\frac{1}{x}\Big)+x
			&
			\text{ if } x\neq 0,
			\\
			0
			&
			\text{ if } x=0.
		\end{cases}
\end{equation*}
It turns out that $f$ is continuous and differentiable, but $f'$ is not continuous at $x=0$. More precisely, we have
\begin{equation*}
		f'(x)
		=
		2x\Big(\operatorname{sign}(x)+\sin\frac{1}{x}\Big)-\cos\frac{1}{x}+1,\quad x\neq 0,\quad\qquad 		f'(0)
		=
		\lim_{x\to 0}\frac{f(x)}{x}
		=
		1.
\end{equation*}
Then $f'(x)>0$ for every $x\in \R$ and, in particular, $f$ is injective. However, \eqref{Lip-stability-1} does not hold. Indeed, for $x_k=\frac{1}{2k\pi}$ we have that
\begin{equation*}
		\lim_{k\to\infty}f'(x_k)
		=
		\lim_{k\to\infty}\frac{1}{k\pi}
		=
		0
	.
\end{equation*}
If $f$ were Lipschitz stable, there would exist a constant $C>0$ such that $f'(x)\ge C$ for every $x$, a contradiction.
\end{example}

Next, we introduce some concepts that will be needed for the generalization of the Lipschitz stability estimate to manifolds. We start by the definition of a differentiable manifold in a Banach space.

\begin{definition}\label{def:manifold}
Let $X$ be a Banach space. We say that $M\subseteq X$ is an \emph{$n$-dimensional differentiable manifold in $X$} if there exists an \emph{atlas} $\{(U_i,\varphi_i)\}_{i\in I}$, with $U_i\subseteq M$, $\bigcup_{i\in I} U_i = M$ and $\varphi_i\colon U_i\to\R^n$, such that
\begin{enumerate}
\item  for every $i\in I$, $U_i$ and $\varphi_i(U_i)$ are open sets, with respect to the topologies of $M$ inherited from $X$ and of $\R^n$, respectively;
\item for every $i\in I$, $\varphi_i\colon U_i\to\R^n$ is a homeomorphism onto its image $\varphi_i(U_i)$;
\item for every $i,j\in I$, the transitions maps
\begin{equation*}
	\varphi_j\circ\varphi_i^{-1}\colon\varphi_i(U_i\cap U_j)\to\varphi_j(U_i\cap U_j)
\end{equation*}
are continuously differentiable.
\end{enumerate}
\end{definition}

Therefore, given $x\in M$ and $i\in I$ such that $U_i\ni x$, the map $\varphi_i^{-1}$ can be seen as a parametrization of $M$ in a neighbourhood of $x$. We shall denote the tangent space of $M$ at $x\in M$ by $T_x M$. For the precise definition, see \Cref{sec:tangent}.

Note that $M$ is always a topological submanifold of $X$. However, in general $M$ is not a differentiable submanifold of $X$, since the two differential structures may not be compatible, meaning that the tangent space $T_x M$ may not be contained in $X$. In other words, the differentiability of $M$ has to be understood in an intrinsic sense, independently of the structure of the ambient space where $M$ lies. In fact, it is easy to construct differentiable manifolds which are not differentiable as objects contained in a given Banach space $X$ (e.g., $M=\{(x,|x|):x\in\R\}\subseteq\R^2=X$ or see the example in \Cref{example-1}).
Therefore, as it will be made clear later,  we will need to require certain regularity with respect to the norm in $X$.

\begin{definition}\label{Lipschitz in X}
Let $X$ be a Banach space and $M\subseteq X$ be an $n$-dimensional differentiable manifold. We say that $M$ is \emph{$\alpha$-H\"older in $X$}, with $\alpha\in (0,1]$, if there exists $\ell\in(0,1]$ such that, for every $i\in I$, the map $\varphi_i^{-1}\colon\varphi_i(U_i)\to M$ is $\alpha$-H\"older continuous with respect to $\|\cdot\|_X$, that is
\begin{equation}\label{holder parametrization}
	\frac{|\varphi_i(x)-\varphi_i(y)|^\alpha}{\|x-y\|_X}
	\ge
	\ell,\qquad x,y\in U_i.
\end{equation}
If $\alpha=1$, we say that $M$ is Lipschitz in $X$.
\end{definition}

For example, the manifold $\{(x,|x|):x\in\R\}$ introduced above is Lipschitz in $\R^2$.

Next, we recall the definition of differentiability of a function on a differentiable manifold.

\begin{definition}\label{F in M diffble}
Let $X$ and $Y$ be Banach spaces and $M\subseteq X$ be an $n$-dimensional differentiable manifold. We say that a mapping $F\colon M\to Y$ is \emph{differentiable} if $F\circ\varphi_i^{-1}\colon\varphi_i(U_i)\to Y$ is Fr\'echet differentiable for every $i\in I$. In addition, if the maps $(F\circ\varphi_i^{-1})'\colon\varphi_i(U_i)\to\L(\R^n,Y)$ are continuous for every $i\in I$, we write $F\in C^1(M,Y)$.
\end{definition}

Note that, even though $F$ is not necessarily Fr\'echet differentiable from $M$ to $Y$, it is always continuous, since $M$ is a topological submanifold of $X$.

We now state the main H\"older and Lipschitz stability result of this subsection.

\begin{theorem}\label{MAINTHM1}
Let $X$ and $Y$ be Banach spaces, $\alpha\in (0,1]$, $M\subseteq X$ be an $n$-dimensional differentiable manifold  $\alpha$-H\"older in $X$ and $K\subseteq M$ be a compact set. Consider a differentiable map $F\in C^1(M,Y)$ satisfying that
\begin{enumerate}
\item $F$ is injective;
\item the differential $dF_x\colon T_xM\to Y$ is injective for every $x\in M$.
\end{enumerate}
Then there exists a constant $C>0$ such that
\begin{equation}\label{Lip-stability}
				\|x-y\|_X
				\le C
			\|F(x)-F(y)\|_Y^\alpha,
				\qquad x,y\in K.
\end{equation}
\end{theorem}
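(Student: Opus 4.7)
The plan is to reduce to \Cref{THM infinite measurements without compactness} locally via the charts of $M$, and then patch the resulting local estimates into a global one using compactness of $K$.

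For each $p\in K$, I would fix a chart $(U_{i(p)},\varphi_{i(p)})$ with $p\in U_{i(p)}$ and consider the composition $G_p:=F\circ\varphi_{i(p)}^{-1}\colon\varphi_{i(p)}(U_{i(p)})\subseteq\R^n\to Y$, which is $C^1$ by definition. By the chain rule for manifold maps (\Cref{sec:tangent}), its Fr\'echet derivative satisfies $G_p'(z)=dF_x\circ(d\varphi_{i(p)}^{-1})_z$ at $z=\varphi_{i(p)}(x)$; since $(d\varphi_{i(p)}^{-1})_z\colon\R^n\to T_xM$ is an isomorphism and $dF_x$ is injective on $T_xM$ by hypothesis (ii), $G_p'(z)$ is injective on $\R^n$ for every $z\in\varphi_{i(p)}(U_{i(p)})$. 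Fix a closed ball $\bar B_p\subseteq\varphi_{i(p)}(U_{i(p)})$ centered at $\varphi_{i(p)}(p)$ with open interior $B_p$, and let $V_p:=\varphi_{i(p)}^{-1}(B_p)$, which is open in $M$. Applying \Cref{THM infinite measurements without compactness} to $G_p$ on $\bar B_p$ (with $X=W=\R^n$; injectivity of $G_p|_{\bar B_p}$ is inherited from the global injectivity of $F$) yields a constant $C_p>0$ with
\begin{equation*}
|\varphi_{i(p)}(x)-\varphi_{i(p)}(y)|\leq C_p\|F(x)-F(y)\|_Y,\qquad x,y\in V_p.
\end{equation*}
Combining with the H\"older property \eqref{holder parametrization} gives the local estimate $\|x-y\|_X\leq \ell^{-1}C_p^\alpha\|F(x)-F(y)\|_Y^\alpha$ for all $x,y\in V_p$.

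Next I would patch these estimates. The family $\{V_p\}_{p\in K}$ is an open cover of the compact set $K$ (the topology inherited from $M$ agrees with the one induced by $\|\cdot\|_X$, since $M$ is topologically embedded in $X$). Extract a finite subcover $V_{p_1},\ldots,V_{p_N}$ and apply the Lebesgue number lemma to obtain $\delta>0$ such that any pair $x,y\in K$ with $\|x-y\|_X<\delta$ is contained in some $V_{p_j}$; for these pairs the local estimate applies with the uniform constant $C_1:=\ell^{-1}(\max_j C_{p_j})^\alpha$. For pairs with $\|x-y\|_X\geq\delta$, the continuous function $(x,y)\mapsto\|F(x)-F(y)\|_Y$ is strictly positive on the compact set $\{(x,y)\in K\times K:\|x-y\|_X\geq\delta\}$ by injectivity of $F$, hence bounded below by some $m>0$. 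Then $\|x-y\|_X\leq\diam_X(K)\leq\diam_X(K)\,m^{-\alpha}\|F(x)-F(y)\|_Y^\alpha$. Setting $C:=\max\bigl(C_1,\diam_X(K)\,m^{-\alpha}\bigr)$ yields \eqref{Lip-stability}.

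The main conceptual obstacle is the careful transfer of hypothesis (ii) to the local chart representation $G_p$: since $M$ need not be a differentiable submanifold of $X$, one cannot simply restrict $dF_x$ to a linear subspace of $X$, but must work intrinsically via the tangent space $T_xM$ and the chart-induced isomorphism $(d\varphi_{i(p)}^{-1})_z\colon\R^n\to T_xM$. Once injectivity of $G_p'$ is secured in this way, \Cref{THM infinite measurements without compactness} provides the local Lipschitz estimate on $\varphi_{i(p)}$-coordinates, and the H\"older regularity of the parametrization converts it into an $\alpha$-H\"older estimate in the norm of $X$; the remaining Lebesgue-number patching and the separation argument for far-apart pairs are standard compactness bookkeeping.
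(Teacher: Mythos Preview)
Your proof is correct and follows essentially the same approach as the paper: reduce to the Euclidean case via charts, apply \Cref{THM infinite measurements without compactness} (the paper invokes its immediate \Cref{COR-1}) to $F\circ\varphi_i^{-1}$ on a compact subset of $\varphi_i(U_i)$, convert via the H\"older property \eqref{holder parametrization}, and handle far-apart pairs by the compactness/injectivity argument (the paper's \Cref{large distance lemma}). The only cosmetic difference is that you invoke the Lebesgue number lemma to guarantee that sufficiently close pairs lie in a single chart neighbourhood, whereas the paper proves a tailored covering statement (\Cref{one chart lemma}) producing compact pieces $K_1,\ldots,K_m$ with a uniform radius $\delta_{K,M}$; both devices serve the same purpose.
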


Observe that the second assumption in the previous theorem is a hypothesis regarding the differential $dF_x$ (see \Cref{sub:diff}). This means that the condition on $F$ is tied to the structure of the manifold where the function is defined.
The differentiability of $F$ may be uncoupled from the differential structure of $M$ when the manifold $M$ is embedded in $X$, as we now discuss.

\begin{remark}\label{rem:embedded}
We say that an $n$-dimensional differentiable manifold $M$ is \emph{embedded} in $X$ if the inclusion map $M\hookrightarrow X$ is an embedding between differentiable manifolds, that is, if $\varphi_i\colon U_i\to\varphi_i(U_i)$ is a diffeomorphism for every $i\in I$, namely, $\varphi_i^{-1}\colon\varphi_i(U_i)\to X$ is continuously differentiable and $\bigl(\varphi_i^{-1}\bigr)'(\varphi_i(x))\colon\R^n\to X$ is injective for every $x\in U_i$. In other words, $M$ is embedded in $X$ if it inherits the differential structure of $X$. 
It is worth observing that a manifold $M$ may be Lipschitz in $X$ but not be embedded in $X$, as e.g.\ $\{(x,|x|):x\in\R\}\subseteq\R^2$, or see \Cref{ex:Lip-emb} below.

Thanks to a simple calculation (see \Cref{sub:diff}), when $M$ is embedded in $X$ and $F$ is Fr\'echet differentiable, the differential of $F$ coincides with its Fr\'echet derivative restricted  to $T_xM$, which may be identified with a subspace of $X$. More precisely, we have
\begin{equation*}
dF_x = F'(x)|_{T_x M}.
\end{equation*}
Thus, when $M$ is embedded in $X$, assumption (2) in \Cref{MAINTHM1} may be replaced by the condition
\[
\textit{$F\in C^1(A,Y)$ and $F'(x)$ is injective on $T_x M\subseteq X$ for every $x\in M$,}
\]
where $A\supseteq M$ is an open set of $X$.
\end{remark}

The following result is a consequence of \Cref{MAINTHM1} and \Cref{rem:embedded}.
\begin{corollary}\label{cor:F(M)embedded}
Under the assumptions from \Cref{MAINTHM1}, if $\{(U_i,\varphi_i)\}$ is an atlas for $M$, then $F(M)\subseteq Y$ is an $n$-dimensional differentiable manifold with the atlas $\{(F(U_i),\varphi_i\circ F^{-1})\}$. Furthermore, $F(M)$ is embedded in $Y$ and $dF_x\colon T_xM\to T_{F(x)}F(M)$.
\end{corollary}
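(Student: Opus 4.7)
The plan is to verify the three axioms of \Cref{def:manifold} for the proposed atlas on $F(M)$, check the embedding criterion from \Cref{rem:embedded}, and finally identify $dF_x$ as a map into $T_{F(x)}F(M)$. The key preliminary step is to show that $F\colon M\to F(M)$ is a homeomorphism, so that charts can be transported cleanly from $M$ to $F(M)$.

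First I would show that $F^{-1}\colon F(M)\to M$ is continuous. Given $x_0\in M$, pick a chart $(U_i,\varphi_i)$ with $x_0\in U_i$ and a compact neighborhood $K\subseteq U_i$ of $x_0$ (say, the $\varphi_i^{-1}$-image of a small closed Euclidean ball around $\varphi_i(x_0)$). \Cref{MAINTHM1} applied to $K$ yields a constant $C>0$ such that $\|x-y\|_X\le C\|F(x)-F(y)\|_Y^\alpha$ for all $x,y\in K$, which gives local H\"older continuity of $F^{-1}$ at $F(x_0)$. Combined with the injectivity of $F$, this shows that $F\colon M\to F(M)$ is a homeomorphism. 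Consequently each $F(U_i)$ is open in $F(M)$ and they cover $F(M)$; each chart $\varphi_i\circ F^{-1}\colon F(U_i)\to\varphi_i(U_i)$ is a composition of homeomorphisms; and the transition maps coincide with those of $M$,
\[
(\varphi_j\circ F^{-1})\circ(\varphi_i\circ F^{-1})^{-1}=\varphi_j\circ\varphi_i^{-1},
\]
which are $C^1$ by hypothesis on the atlas of $M$.

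For the embedding, I would invoke \Cref{rem:embedded} and verify that each inverse chart $(\varphi_i\circ F^{-1})^{-1}=F\circ\varphi_i^{-1}\colon\varphi_i(U_i)\to Y$ is continuously Fr\'echet differentiable with injective derivative. The $C^1$ regularity is precisely the hypothesis $F\in C^1(M,Y)$ from \Cref{F in M diffble}. For injectivity of $(F\circ\varphi_i^{-1})'(\varphi_i(x))\colon\R^n\to Y$, the chain rule from \Cref{sec:tangent} gives
\[
(F\circ\varphi_i^{-1})'(\varphi_i(x))=dF_x\circ d(\varphi_i^{-1})_{\varphi_i(x)},
\]
where $d(\varphi_i^{-1})_{\varphi_i(x)}\colon\R^n\to T_xM$ is a linear isomorphism; hence injectivity of $dF_x$ on $T_xM$ (assumption~(2) of \Cref{MAINTHM1}) transfers to injectivity of $(F\circ\varphi_i^{-1})'(\varphi_i(x))$.

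The identification $dF_x\colon T_xM\to T_{F(x)}F(M)$ then comes for free: once $F(M)$ is embedded in $Y$, the tangent space $T_{F(x)}F(M)$ coincides (as a subspace of $Y$) with the image of $\R^n$ under $(F\circ\varphi_i^{-1})'(\varphi_i(x))$, and by the factorization above this image equals $dF_x(T_xM)$. The only genuine obstacle in this chain is producing a continuous inverse for $F$---an injective continuous map with infinite-dimensional codomain need not be open---and this is supplied locally by the H\"older stability estimate of \Cref{MAINTHM1}.
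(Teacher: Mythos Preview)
Your proof is correct and follows essentially the same route as the paper: use the H\"older stability of \Cref{MAINTHM1} to make $F\colon M\to F(M)$ a homeomorphism, transport the atlas, and then verify the embedding criterion of \Cref{rem:embedded} via $F\in C^1(M,Y)$ and the injectivity of $dF_x$. Your argument is in fact slightly more careful than the paper's, which asserts the homeomorphism globally from \eqref{Lip-stability} without spelling out the local-to-global step you perform with compact chart neighbourhoods.
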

\begin{proof}
The H\"older stability estimate \eqref{Lip-stability} yields that $F\colon M\to F(M)\subseteq Y$ is a homeomorphism. Moreover, since $\{(U_i,\varphi_i)\}$ is an atlas for $M$ satisfying the conditions from \Cref{def:manifold}, then $F\circ\varphi_i^{-1}$ is also an homeomorphism, and we can define an atlas for $F(M)$ by $\{(F(U_i),\varphi_i\circ F^{-1})\}$ so that $F(M)$ is an $n$-differentiable manifold in $Y$.
By \Cref{rem:embedded}, to see that $F(M)$ is embedded in $Y$ we just need to check that $\varphi_i\circ F^{-1}\colon F(U_i)\to \varphi_i(U_i)$ is a diffeomorphism, that is, $F\circ\varphi_i^{-1}\colon\varphi_i(U_i)\to Y$ is continuously differentiable and $(F\circ\varphi_i^{-1})'(\varphi_i(x))\colon\R^n\to Y$ is injective for every $x\in U_i$, which are granted by the hypothesis $F\in C^1(M,Y)$ (\Cref{F in M diffble}) and the fact that $dF_x$ is injective by assumption, respectively.
\end{proof}

In the following example we show that, if the manifold is not embedded, the Fr\'echet differentiability of $F$ alone is not a sufficient assumption, even if $F'(x)$ is injective on the whole $X$ for every $x\in M$.

\begin{example}\label{ex:Lip-emb}
Let $M=\{\chi_{[t,t+1]}\,:\,t\in\R\}\subseteq L^1=L^1(\R)$ and consider the function $\varphi\colon M\to\R$ defined as $\varphi(\chi_{[t,t+1]})=t$ for every $t\in\R$. We first check that the conditions from \Cref{def:manifold} are satisfied so that $M$ is a $1$-dimensional differentiable manifold together with the atlas $\{(M,\varphi)\}$. Since $\varphi^{-1}(t)=\chi_{[t,t+1]}$, a direct computation shows that
\begin{equation}\label{isometry}
		\|\varphi^{-1}(s)-\varphi^{-1}(t)\|_{L^1}
		=
		2\min(1,|s-t|).
\end{equation}
Thus, $\varphi$ is a homeomorphism. Finally, we just simply observe that, since the atlas associated to $M$ has only one chart, the unique transition map $\varphi\circ\varphi^{-1}$ is the identity operator on $\R$, so $M$ is a differentiable manifold. Furthermore, in view of \Cref{Lipschitz in X}, by \eqref{isometry} it turns out that $M$ is Lipschitz in $L^1$. However, $M$ is not embedded in $L^1$ because $\varphi^{-1}\colon\R\to L^1$ is not differentiable.

Now we consider a function $F\colon L^1\to L^1$ satisfying all the hypotheses in \Cref{MAINTHM1} except the injectivity of $(F\circ\varphi^{-1})'(\varphi(x))\in\L(\R,L^1)$, which is replaced by the injectivity of $F'(x)$ in $L^1$, but failing to be $\alpha$-H\"older stable for every $\alpha\in(0,1]$.

Let $g\in L^\infty(\R)$ be the $1$-periodic function defined by
\[
g(t)=
e^{-\frac{1}{t}}
,\qquad t\in (0,1].
\]
Then we consider $F(u):\,=gu$ for each $u\in L^1$. It is easy to check that $F$ is an injective linear bounded map from $L^1$ to $L^1$. Thus $F$ belongs to $C^1(\R,L^1)$ and, by linearity, the Fr\'echet derivative of $F$ coincides with $F$, i.e. $F'(u)\equiv F$ for every $u\in L^1$. However, $F$ is not $\alpha$-H\"older stable in $M$. Indeed, by the linearity of $F$ we have
\begin{equation*}
		F(\varphi^{-1}(t))-F(\varphi^{-1}(0))
		=
		F\big(\varphi^{-1}(t)-\varphi^{-1}(0)\big)
		=
		g\cdot\big(\chi_{[t,t+1]}-\chi_{[0,1]}\big)
\end{equation*}
for every $t\in(0,1)$, and taking the $L^1$-norm we obtain
\begin{equation*}
		\|F(\varphi^{-1}(t))-F(\varphi^{-1}(0))\|_{L^1}
		=
		\int_0^tg(s)\ ds+\int_1^{1+t}g(s)\ ds
		=
		2\int_0^t g(s)\,ds
		\le
		2te^{-\frac{1}{t}}.
\end{equation*}
The last term is infinitesimal of infinite order as $t\to 0^+$ and so, by \eqref{isometry},
\begin{equation*} 
		\frac{\|F(\varphi^{-1}(t))-F(\varphi^{-1}(0))\|_{L^1}^\alpha}{\|\varphi^{-1}(t)-\varphi^{-1}(0)\|_{L^1}}
		\le
		2^{\alpha-1}\frac{e^{-\frac{\alpha}{t}}}{t^{1-\alpha}}\xrightarrow[t\to 0^+]{}0.
\end{equation*}
This proves that $F$ is not $\alpha$-H\"older stable in $M$, namely, \eqref{Lip-stability} is not satisfied.
\end{example}

%%%%%%%%%%%%%%%%%%%%%%%%%%%%%%%%%%%%
\subsection{Stability with finite measurements}\label{sub:finite}
%%%%%%%%%%%%%%%%%%%%%%%%%%%%%%%%%%%%
We now consider the case where, instead of the infinite-dimensional measurements $F(x)$, we have at our disposal only a finite-dimensional approximation. As in \cite{ALB-SAN}, inspired by the theory of regularization by projection \cite{scherzer_book}, we write the new measurements as $Q_N F(x)$ for a suitable operator $Q_N$.

\begin{hypothesis}\label{Q_N}
For each $N\in\N$, let $Q_N\colon Y\to Y$ be a bounded linear map. Assume that there exist a subspace $\widetilde Y\subseteq Y$ and $D>0$ such that
\begin{enumerate}
\item\label{ass:1} $\|Q_N\big|_{\widetilde Y}\|_{\L(\widetilde Y,Y)}\le D$ for every $N\in\N$;
\item\label{ass:2} $Q_N\big|_{\widetilde Y}\to I_{\widetilde Y}$ as $N\to\infty$ with respect to the strong operator topology, i.e.
\begin{equation*}
		\lim_{N\to\infty}\|y-Q_Ny\|_Y
		=
		0
\end{equation*}
for every $y\in\widetilde Y$.
\end{enumerate}
\end{hypothesis}

Note that condition \eqref{ass:1} is implied by \eqref{ass:2} when $\widetilde Y$ is a closed subspace of $Y$, by the uniform boundedness principle.

Let us now list some examples of operators $Q_N$.\smallskip

Any family of bounded operators $Q_N$ such that $Q_N\to I_Y$ strongly may be considered (in particular, we do not require convergence with respect to the operator norm). This situation can arise in practice when $Y$ is an infinite-dimensional separable Hilbert space and $Q_N$ is the orthogonal projection onto the space spanned by the first $N$ elements of an orthonormal basis (see \cite[Example 1]{ALB-SAN} for more details).

Another example, that is adapted to inverse boundary value problems, is when the measurements themselves are operators. In this case, the maps $Q_N$ do not converge strongly to the identity on the whole $Y$. Let us present it in more details.

\begin{example}\label{ex:gal}
Let $Y = \L (Y^1,Y^2)$ be the space of bounded linear operators from $Y^1$ to $Y^2$, where $Y^1$ and $Y^2$ are Banach spaces. Let  $P^k_N \colon Y^k \to Y^k$ be bounded maps such that $P^{2}_N \to I_{Y^k}$ and $(P^{1}_N)^* \to I_{Y^k}$ strongly as $N\to +\infty$. 

Let $\widetilde Y=\{T\in Y:\text{$T$ is compact}\}$ and $Q_N \colon Y \to Y$  be the maps defined by
\[
Q_N (y) = P^2_N y P^1_N,\qquad y \in Y.
\]
Then, even if $Q_N \not\to I_Y$ strongly,  Hypothesis~\ref{Q_N} is satisfied. We refer to \cite[Example 2]{ALB-SAN} for full details.
\end{example}

We now present the following Lipschitz stability result with finite measurements in the case of linear subspaces, which can be found in \cite[Theorem 2]{ALB-SAN} under the additional assumption that $K$ is convex.

\begin{theorem}\label{FM-1}
Let $X$ and $Y$ be Banach spaces, $A\subseteq X$ be an open set, $W\subseteq X$ be an $n$-dimensional subspace, $K\subseteq W\cap A$ be a compact set and  $Q_N\colon Y\to Y$ be bounded linear maps satisfying \Cref{Q_N}. Consider a Fr\'echet differentiable map $F\in C^1(A,Y)$ such that
\begin{enumerate}
\item $F(x)-F(y) \in \widetilde Y$ for every $x,y \in K$;
\item $\ran(F'(x)\big|_W)\subseteq\widetilde Y$ for every $x\in W\cap A$, where $\ran$ denotes the range;
\item the Lipschitz stability estimate
\begin{equation*}
				\|x-y\|_X\le C\|F(x)-F(y)\|_Y
			,\qquad x,y\in K,
\end{equation*}
is satisfied for some $C>0$. 
\end{enumerate}
Then $Q_NF$ satisfies the Lipschitz stability estimate
\begin{equation}\label{LS-QNF}
				\|x-y\|_X\le 2c_KC \|Q_NF(x)-Q_NF(y)\|_Y
				,\qquad x,y\in K,
\end{equation}
for some $c_K>0$ depending only on $K$ and every  sufficiently large $N\in\N$. If $K$ is convex, we can choose $c_K=1$. In the general case, we have
\[
c_K=\frac{\diam K}{\delta_K},
\]
where $\delta_K$ is the constant given in Lemma~\ref{delta-K} below.
\end{theorem}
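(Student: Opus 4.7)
The plan is to follow the strategy of the convex case: decompose $F=Q_NF+(I-Q_N)F$, insert into the triangle inequality, invoke hypothesis (3), and absorb the $(I-Q_N)F$-term on the left by showing it is small. The obstruction when $K$ is no longer convex is that the usual trick of bounding $(I-Q_N)(F(x)-F(y))$ via the fundamental theorem of calculus requires the segment $[x,y]$ to lie in a region where $F'|_W$ is controlled. I will fix this by invoking Lemma~\ref{delta-K} to produce a constant $\delta_K\in(0,\diam K]$ together with a compact set $\tilde K$ satisfying $K\subseteq\tilde K\subseteq W\cap A$ such that $[x,y]\subseteq\tilde K$ whenever $x,y\in K$ and $\|x-y\|_X\leq\delta_K$; the factor $c_K=\diam K/\delta_K$ will then enter only in the treatment of pairs at mutual distance greater than $\delta_K$.

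Two uniform convergence statements will drive the estimate: (i) $\sup_{z\in K}\|(I-Q_N)F(z)\|_Y\to 0$, and (ii) $\sup\{\|(I-Q_N)F'(z)w\|_Y : z\in\tilde K,\ w\in W,\ \|w\|_X\leq 1\}\to 0$. Both follow once I observe that the relevant sets -- namely $F(K)$ for (i), and the image of the compact product $F'(\tilde K)\times\{w\in W:\|w\|_X\leq 1\}$ under the evaluation map $(T,w)\mapsto Tw$ for (ii) -- are compact subsets of $\widetilde Y$. This uses the continuity of $F$ and $F'$, hypotheses (1) and (2), and the compactness of the closed unit ball of the finite-dimensional space $W$. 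Combined with the uniform bound in \Cref{Q_N}, the pointwise convergence $Q_N\to I_{\widetilde Y}$ upgrades to uniform convergence on these compact sets via a standard finite-net argument.

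With $N$ large enough that the supremum in (ii) is at most $1/(2C)$ and that in (i) is at most $\delta_K/(4C)$, I split into cases. When $\|x-y\|_X\leq\delta_K$ the segment $[x,y]$ lies in $\tilde K$, and applying the fundamental theorem of calculus to $t\mapsto F(y+t(x-y))$ yields $\|(I-Q_N)(F(x)-F(y))\|_Y\leq\|x-y\|_X/(2C)$; inserting this into hypothesis (3) via the triangle inequality gives $\|x-y\|_X\leq 2C\|Q_NF(x)-Q_NF(y)\|_Y$. When $\|x-y\|_X>\delta_K$, hypothesis (3) forces $\|F(x)-F(y)\|_Y\geq\delta_K/C$, and subtracting twice the supremum in (i) leaves $\|Q_N(F(x)-F(y))\|_Y\geq\delta_K/(2C)$; since $\|x-y\|_X\leq\diam K=c_K\delta_K$, this produces $\|x-y\|_X\leq 2c_KC\|Q_NF(x)-Q_NF(y)\|_Y$. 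Because $c_K\geq 1$, the two cases merge into the stated estimate, and when $K$ is convex one may take $\delta_K=\diam K$, i.e.\ $c_K=1$.

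The hard part is the compactness behind (ii): $(I-Q_N)F'(z)$ must go to zero \emph{uniformly} in $z$ ranging over a neighborhood of $K$ in $W\cap A$ and in $w$ over the unit ball of $W$, which depends crucially on both hypothesis (2) and the finite-dimensionality of $W$. The other delicate ingredient -- the geometric construction of $\delta_K$ and $\tilde K$, identifying which segments remain inside a fixed compact subset of $W\cap A$ -- is isolated in Lemma~\ref{delta-K} and is precisely what renders the convexity of $K$ dispensable.
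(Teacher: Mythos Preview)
Your proposal is correct and follows essentially the same route as the paper's proof: invoke Lemma~\ref{delta-K} for $\delta_K$ and the compact extension $\widehat K$, handle the large-distance case $\|x-y\|_X\ge\delta_K$ via the triangle inequality and the uniform smallness of $\|(I-Q_N)F(\xi)\|_Y$ on $K$ (this is the content of Lemma~\ref{large distance lemma QN}), and handle the short-distance case by the mean value inequality along $[x,y]\subseteq\widehat K$ together with the uniform smallness of $\|(I-Q_N)F'(\xi)\|_{\L(W,Y)}$ on $\widehat K$. Your compactness justification for the uniform convergence in (ii) is in fact more explicit than the paper, which refers back to \cite{ALB-SAN} for the claim $s_N\to 0$.
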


\begin{remark}\label{rem:N}
It is worth to note that the values of $N\in\N$ for which the Lipschitz stability estimate in the previous theorem holds depend on the a priori data explicitly. More precisely, as it will be clear from the proof, the inequality \eqref{LS-QNF} is true for those $N\in\N$ satisfying
\begin{align*}
&	\sup_{\xi,\eta\in K}\|(I_Y-Q_N)(F(\xi)-F(\eta))\|_Y\le\frac{\delta_K}{2C},\\
&	\sup_{\xi\in\widehat K}\|(I_Y-Q_N)F'(\xi)\|_{\L(W,Y)}
	\le
	\frac{1}{2C},
\end{align*}
where $\widehat K=\{\xi\in W: \dist_X(\xi,K)\le\delta_K\}=\overline{B_X(K,\delta_K)}\cap W$ is the compact neighbourhood of $K$ constructed  in \Cref{delta-K} below. If $K$ is convex, the first of these two inequalities may be dropped.
\end{remark}

We now consider the case of a manifold.

\begin{theorem}\label{FM-2}
Let $X$ and $Y$ be Banach spaces, $M\subseteq X$ be an $n$-dimensional differentiable manifold, $K\subseteq M$ be a compact set, $Q_N\colon Y\to Y$ be bounded linear maps satisfying \Cref{Q_N} and $C>0$.
Assume in addition that $M$ is $\alpha$-H\"older in $X$ for some $\alpha\in (0,1]$ with constant $\ell\in(0,1]$ as in \eqref{holder parametrization}. 
Consider a differentiable map $F\in C^1(M,Y)$ such that
\begin{enumerate}
\item $F(x)-F(y) \in \widetilde Y$ for every $x,y \in K$;
\item $\ran(dF_x)\subseteq\widetilde Y$ for every $x\in M$;
\item and either
\begin{itemize}
\item[(3a)] $\varphi_i$ is $\ell^{-1}$-Lipschitz for every  $i\in I$ and
\begin{equation*}
				\|x-y\|_X\le C\|F(x)-F(y)\|_Y
				,\qquad x,y\in K,
\end{equation*} 
\item[(3b)] \label{3b} or for every $i\in I$
\begin{equation*}
	|\varphi_i(x)-\varphi_i(y)|\le \ell^{-1}C \|F(x)-F(y)\|_Y,\qquad x,y\in U_i\cap K.
\end{equation*}
\end{itemize}
\end{enumerate}
Then $Q_NF$ satisfies the H\"older stability estimate
\begin{equation*}
				\|x-y\|_X\le c_{K,M} (2C)^\alpha  \|Q_NF(x)-Q_NF(y)\|^\alpha_Y
				,\qquad x,y\in K,
\end{equation*}
for some $c_{K,M}>0$ depending only on $K$ and $M$ and every sufficiently large $N\in\N$.
\end{theorem}

Note that it is possible to obtain a constructive estimate for the parameter $N$. This involves conditions similar to the one given in \Cref{rem:N}, but related to the composition of $F$ with the charts. For the sake of exposition we decided not to include this expression here;  it can be easily obtained from the proof.

\begin{remark}
As in \Cref{MAINTHM1} and \Cref{rem:embedded}, in the case when $M$ is embedded in $X$ and $F$ is Fr\'echet differentiable, assumption (2) in \Cref{FM-2} may be replaced by the condition
\[
\textit{$F\in C^1(A,Y)$ and $\ran(F'(x)\big|_{T_xM})\subseteq\widetilde Y$  for every $x\in M$,}
\]
where $A\supseteq M$ is an open set of $X$.
\end{remark}

\begin{remark}\label{rem:frechet}
In the case in which $\widetilde Y\subseteq Y$ is a closed subspace, the condition $\ran\{dF_x\}\subseteq\widetilde Y$ is a consequence of the condition
\begin{equation}\label{eq:difference in Y}
   F(x)-F(y) \in \widetilde Y,\qquad  x,y \in M.
\end{equation}
Indeed, fix $x\in M$ and $i\in I$ such that $x\in U_i$. Then
\begin{equation*}
	dF_xh
	=
	(F\circ\varphi_i^{-1})'(\varphi_i(x))h
	=
	\lim_{t\to 0}\frac{F\circ\varphi_i^{-1}(\varphi_i(x)+th)-F\circ\varphi_i^{-1}(\varphi_i(x))}{t}
\end{equation*}
for every $h\in\R^n$. Observe that the right-hand side is the limit of elements in $\widetilde Y$. Since $\widetilde Y$ is closed, then the limit is also in $\widetilde Y$, so $\ran\{dF_x\}\subseteq\widetilde Y$.
\end{remark}

As a combination of \Cref{MAINTHM1} and \Cref{FM-2} we also derive the following result.

\begin{theorem}\label{QNF-stability}
Let $X$ and $Y$ be Banach spaces, $M\subseteq X$ be an $n$-dimensional differentiable manifold  Lipschitz in $X$, $K\subseteq M$ be a compact set and $Q_N\colon Y\to Y$ be bounded linear maps satisfying \Cref{Q_N}. Consider a differentiable map $F\in C^1(M,Y)$ satisfying that
\begin{enumerate}
\item $F$ is injective;
\item the differential $dF_x\colon T_xM\to Y$ is injective for every $x\in M$;
\item $F(x)-F(y) \in \widetilde Y$ for every  $x,y \in K$;
\item $\ran(dF_x)\subseteq\widetilde Y$ for every $x\in M$.
\end{enumerate}
Then $Q_NF$ satisfies the Lipschitz stability estimate
\begin{equation}\label{QNF-stability-estimate}
			\|x-y\|_X \le C	\|Q_NF(x)-Q_NF(y)\|_Y
								,\qquad x,y\in K,
\end{equation}
for some $C>0$ and every sufficiently large $N\in\N$.
\end{theorem}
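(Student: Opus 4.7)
The plan is to combine Theorem~\ref{MAINTHM1} with a relative-compactness argument that upgrades the pointwise convergence from Hypothesis~\ref{Q_N} to uniform convergence on a specific compact set in $\widetilde Y$. Since $M$ is Lipschitz in $X$, Theorem~\ref{MAINTHM1} applies with $\alpha=1$ and yields a constant $C_1>0$ with
\begin{equation*}
\|x-y\|_X\le C_1\|F(x)-F(y)\|_Y,\qquad x,y\in K.
\end{equation*}
Therefore it is enough to prove that, for all sufficiently large $N$ and every $x,y\in K$ with $x\ne y$,
\begin{equation*}
\|(I-Q_N)(F(x)-F(y))\|_Y \le \tfrac{1}{2}\|F(x)-F(y)\|_Y,
\end{equation*}
since this gives $\|F(x)-F(y)\|_Y\le 2\|Q_N(F(x)-F(y))\|_Y$ and \eqref{QNF-stability-estimate} follows with $C=2C_1$.

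I would obtain this inequality by analysing the normalized set
\begin{equation*}
S := \left\{\frac{F(x)-F(y)}{\|F(x)-F(y)\|_Y} : x,y\in K,\ x\ne y\right\},
\end{equation*}
which lies in $\widetilde Y$ thanks to assumption~(3) and the fact that $\widetilde Y$ is a subspace. A standard $\eps$-net argument, combining the uniform bound $\|Q_N|_{\widetilde Y}\|_{\L(\widetilde Y,Y)}\le D$ with the strong convergence $\|y-Q_N y\|_Y\to 0$ on $\widetilde Y$, shows that $Q_N\to I$ uniformly on every subset of $\widetilde Y$ that is compact in the $Y$-norm. Consequently, it suffices to prove that the closure $\overline S$ (in $Y$) is compact and still contained in $\widetilde Y$.

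This relative-compactness property is the main obstacle. I would take any $z_k=(F(x_k)-F(y_k))/\|F(x_k)-F(y_k)\|_Y\in S$ and, by compactness of $K$, extract subsequences with $x_k\to x^*$ and $y_k\to y^*$ in $K$. The \emph{separated} case $x^*\ne y^*$ is easy: injectivity and continuity of $F$ give $z_k\to (F(x^*)-F(y^*))/\|F(x^*)-F(y^*)\|_Y\in\widetilde Y$. The delicate \emph{colliding} case $x^*=y^*$ requires a first-order expansion in a chart $(U,\varphi)$ around $x^*$: writing $u_k=\varphi(x_k)$, $v_k=\varphi(y_k)$, $u^*=\varphi(x^*)$, $G=F\circ\varphi^{-1}\in C^1(\varphi(U),Y)$, and $w_k=(u_k-v_k)/|u_k-v_k|$, continuity of $\varphi$ yields $u_k,v_k\to u^*$ and Taylor's theorem gives
\begin{equation*}
F(x_k)-F(y_k) = G'(u^*)(u_k-v_k) + o(|u_k-v_k|).
\end{equation*}
A further subsequence makes $w_k\to w^*$ with $|w^*|=1$; injectivity of $dF_{x^*}$ (which corresponds to injectivity of $G'(u^*)$ via $T_{x^*}M\simeq\R^n$) forces $G'(u^*)w^*\ne 0$, whence
\begin{equation*}
z_k \to \frac{G'(u^*)w^*}{\|G'(u^*)w^*\|_Y}\in\ran(dF_{x^*})\subseteq\widetilde Y,
\end{equation*}
the last inclusion coming from assumption~(4). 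Both cases produce subsequential limits in $\widetilde Y$, which establishes the claim and completes the argument.
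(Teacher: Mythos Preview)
Your argument is correct and follows a genuinely different route from the paper. The paper proves Theorem~\ref{QNF-stability} by combining Theorem~\ref{MAINTHM1} with (the proof of) Theorem~\ref{FM-2}: after establishing Lipschitz stability of $F$, it uses the chart decomposition of Lemma~\ref{one chart lemma} to reduce to the flat map $\widetilde F=F\circ\varphi_i^{-1}\colon\R^n\to Y$ on compact pieces $\widetilde K_j$, and then applies Corollary~\ref{FM-COR} (the $\R^n$ specialization of Theorem~\ref{FM-1}), where the mean value theorem bounds $\|(I-Q_N)(\widetilde F(\widetilde x)-\widetilde F(\widetilde y))\|_Y$ by $\sup_{\xi\in\widehat K}\|(I-Q_N)\widetilde F'(\xi)\|_{\L(\R^n,Y)}\,|\widetilde x-\widetilde y|$; the large-distance case is handled separately by Lemma~\ref{large distance lemma QN}. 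You bypass this reduction-to-$\R^n$ and the piecewise construction entirely: you show directly that the set $S$ of normalized secants is relatively compact with closure contained in $\widetilde Y$ (charts enter only to analyse the colliding subsequences via the integral form of the mean value theorem for $G=F\circ\varphi^{-1}$), and then use the $\eps$-net argument to obtain uniform convergence of $Q_N$ on $\overline S$. Your approach is more conceptual and global, and avoids the separate large/small-distance split; the paper's approach is more modular (it reuses the already-proved finite-measurement machinery for subspaces) and, through Remark~\ref{rem:N}, gives more explicit, constructive control on how large $N$ must be.
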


\begin{remark}\label{ref:deriv_compact}
Assume that $Y = \L (Y^1,Y^2)$, with $Y^1$ and $Y^2$ Banach spaces, $Q_N$ is given as in Example~\ref{ex:gal} and
\begin{equation*}
dF_x h \colon Y^1\to Y^2\;\; \text{is compact for every $x \in M, h \in T_x M$.}
\end{equation*}
In view of \Cref{rem:frechet}, this assumption may be replaced by
\begin{equation*}
F(x_1)-F(x_2)\;\; \text{is compact for every $x_1,x_2 \in M$.}
\end{equation*}
Then Hypothesis~\ref{Q_N} is satisfied with $\widetilde Y=\{T\in Y:\text{$T$ is compact}\}$ and by Theorem~\ref{QNF-stability} we obtain a Lipschitz stability estimate from a finite number of measurements.
\end{remark}

It is worth observing that \Cref{MAINTHM1,FM-2,QNF-stability} can be readily extended to the case with a finite number of pairwise disjoint manifolds, possibly with different dimensions. For example, the extension of \Cref{QNF-stability} reads as follows.

\begin{theorem}\label{QNF-stability-multiple}
Let $X$ and $Y$ be Banach spaces, $Q_N\colon Y\to Y$ be bounded linear maps satisfying \Cref{Q_N} and $P\in\N$. For $p=1,\dots,P$, let  $M_p\subseteq X$ be  $n_p$-dimensional differentiable manifolds  Lipschitz in $X$ and $K_p\subseteq M_p$ be pairwise disjoint compact sets. Set $M=\cup_p M_p$ and $K=\cup_p K_p$. Consider a  map $F\colon M\to Y$  satisfying for every $p=1,\dots,P$ that
\begin{enumerate}
\item $F\in C^1(M_p,Y)$;
\item $F$ is injective;
\item the differential $dF_x\colon T_xM_p\to Y$ is injective for every $x\in M_p$;
\item $F(x)-F(y) \in \widetilde Y$ for every  $x,y \in K$;
\item $\ran(dF_x)\subseteq\widetilde Y$ for every $x\in M_p$.
\end{enumerate}
Then $Q_NF$ satisfies the Lipschitz stability estimate
\begin{equation}\label{QNF-stability-estimate2}
			\|x-y\|_X \le C	\|Q_NF(x)-Q_NF(y)\|_Y
								,\qquad x,y\in K,
\end{equation}
for some $C>0$ and every sufficiently large $N\in\N$.
\end{theorem}

\subsection{Stability when $x,y \notin K$}
It is easy to see that all H\"older and Lipschitz stability estimates derived in this section may be easily extended to deal with the case $x,y\notin K$, which models mismodeling errors, namely, the case when $d(x,K)$ and $d(y,K)$ are small but possibly nonzero.

Suppose that $F\colon A\subseteq X\to Y$ is Lipschitz and satisfies \eqref{eq:difference in Y} and the H\"older stability estimate
\begin{equation*}
			\|x-y\|_X \le c	\|Q_NF(x)-Q_NF(y)\|_Y^\alpha
								,\qquad x,y\in K,
\end{equation*}
where $K$ is a compact set (possibly a subset of a manifold $M$).
Take now $x,y\in A$ such that
\begin{equation}\label{eq:distance-delta}
d(x,K)\le\delta,\qquad d(y,K)\le \delta,
\end{equation}
for some $\delta\in [0,1]$. We claim that
\begin{equation}\label{eq:mismod}
    \|x-y\|_X\le c \|Q_NF(x)-Q_NF(y)\|_Y^\alpha + 2(1+c(DL)^\alpha)\delta^\alpha,
\end{equation}
where $L>0$ denotes the Lipschitz constant of $F$.

Indeed, let $x_K,y_K\in K$ be such that $d(x,K)=\|x-x_K\|_X$ and $d(y,K)=\|y-y_K\|_X$, whose existence follows by the compactness of $K$. We readily derive
\begin{equation*}
    \begin{split}
     \|x-y\|_X&\le \|x-x_K\|_X+\|x_K-y_K\|_X+\|y_K-y\|_X\\
     &\le c	\|Q_NF(x_K)-Q_NF(y_K)\|_Y^\alpha + d(x,K)+ d(y,K).
    \end{split}
\end{equation*}
By using the fact that $t\mapsto t^\alpha$ is subadditive, together with condition \eqref{ass:1} from Hypothesis~\ref{Q_N}, we obtain
\[
    \|Q_NF(x_K)-Q_NF(y_K)\|_Y^\alpha \le (DL)^\alpha (\|x-x_K\|_X^\alpha 
    +\|y-y_K\|_X^\alpha) +\|Q_NF(x)-Q_NF(y)\|_Y^\alpha.
\]
Altogether, we have
\begin{multline*}
   \|x-y\|_X\le c \|Q_NF(x)-Q_NF(y)\|_Y^\alpha+ d(x,K)+ d(y,K) \\
   + c(DL)^\alpha(d(x,K)^\alpha+ d(y,K)^\alpha)
\end{multline*}
for every $x,y\in A$. Thus, by using \eqref{eq:distance-delta}, \eqref{eq:mismod} immediately follows.

\section{Stability for two nonlinear inverse problems}\label{sec:appl}

In this section we apply the abstract stability results to two severely ill-posed nonlinear inverse boundary value problems: the Calder\'on problem and the Gel'fand-Calder\'on problem. We present here the main results and leave all the proofs to  Sections~\ref{sec:cald} and \ref{sec:gelf}.

\subsection{The Calder\'on problem with a triangular inclusion} \label{sub:cald}
In this section we focus on the Calder\'on problem \cite{calderon1980,2002-borcea,2009-uhlmann}, which is the mathematical model for electrical impedance tomography  (EIT) \cite{eit-1999}.

Consider a bounded Lipschitz domain $\Omega \subseteq \R^d$, with $d\ge 2$, equipped with an electrical conductivity $\sigma \in L_+^{\infty}(\Omega)$, where
\[
L_+^{\infty}(\Omega):=\{f\in L^\infty(\Omega):f \ge \lambda \text{ a.e.\ in $\Omega$, for some $\lambda>0$}\}.
\]
The corresponding Dirichlet-to-Neumann (DN) or voltage-to-current map is the operator $\Lambda_{\sigma}\colon H^{1/2}(\partial \Omega) \to H^{-1/2}(\partial \Omega)$, defined by
\begin{equation}
\Lambda_{\sigma}(f) = \sigma \left. \frac{\partial u_\sigma^f}{\partial \nu}\right|_{\partial \Omega},
\end{equation}
where $\nu$ is the unit outward normal to $\partial \Omega$ and $u^f_\sigma\in H^1(\Omega)$ is the unique weak solution of the Dirichlet problem for the conductivity equation
\begin{equation}\label{schr}
\left\{
\begin{array}{ll}
-\nabla \cdot (\sigma \nabla u^f_\sigma) = 0 \quad &\text{in } \Omega,\\
u^f_\sigma =f \quad &\text{on } \partial \Omega.
\end{array}
\right.
\end{equation}
The following inverse boundary value problem arises from this framework, see \cite{calderon1980,2002-borcea,2009-uhlmann} and references therein.

\vspace*{.2cm}

{\bf Inverse conductivity problem.} Given $\Lambda_{\sigma}$, find $\sigma$ in $\Omega$.

\vspace*{.2cm}

 It is well known that the knowledge of $\Lambda_\sigma$ determines $\sigma$ uniquely if $d=2$ \cite{Nachman1996,astala2006} or if $\sigma$ is smooth enough \cite{Sylvester1987,haberman2015,caro2016}. The inverse problem is severely ill-posed, and only logarithmic stability holds true \cite{alessandrini1988,mandache2001,clop2010,caro2013}.

In recent years several Lipschitz stability estimates have been obtained for this inverse problem under certain a priori assumptions on $\sigma$, such as for $\sigma$ piecewise constant \cite{2005-alessandrini-vessella}, piecewise linear \cite{alessandrini2017}, for $\sigma$ belonging to a finite-dimensional subspace of piecewise analytic functions \cite{Harrach_2019,eberle2021lipschitz} (see \cite{gaburro2015,alessandrini2017u} for the anisotropic case), or under nonlinear assumptions, namely for $\sigma$ piece-wise constant  on polygons or polygonal partitions with a known upper bound on the number of (unknown) vertices/edges \cite{beretta2017differentiability,beretta2019,beretta2020preprint,aspri2022lipschitz}. In all these cases, full boundary measurements are required, i.e.\ all possible combinations of current/voltage data.

When only a finite number of measurements is available, several results have been recently obtained \cite{alberti2018,Harrach_2019,ruland2018,ALB-SAN,alberti2020calderon} where the conductivity is always assumed to lie in a certain  finite-dimensional \textit{linear} subspace $W$ of $L^\infty(\Omega)$.
Related works consider the problem of locating small inhomogeneities inside the medium, which again can be seen as a way of reducing the dimensionality of the unknown space \cite{bruhl-hanke-vogelius-2003,ammari-kang-2004,ammari-kang-2007}.\medskip

We now show how the results of Section~\ref{sub:finite} can be used to derive Lipschitz stability estimates for EIT under \textit{nonlinear} assumptions with finite measurements.

For this, it is useful to denote  the family of triangles in $\R^2$ by $\triangle^2$, namely,
\begin{equation*}
	\triangle^2
	:\,=
	\{\conv\{v_0,v_1,v_2\}\,:\,v_0,v_1,v_2\in\R^2,\ \det(v_1-v_0,v_2-v_0)\neq 0\},
\end{equation*}
where $\conv S$ stands for the convex hull of a set $S$, i.e.\ the smallest set containing all convex combinations of elements in $S$. The condition $\det(v_1-v_0,v_2-v_0)\neq 0$ in the definition of $\triangle^2$ ensures that the triangles are not degenerate.\smallskip

Let us now consider the following two-dimensional setup.
\begin{itemize}
\item $\Omega \subset \R^2$ is a bounded Lipschitz domain.
\item $X = L^1(\Omega)$.
\item $Y = \L(H^{\frac 1 2}(\partial \Omega), H^{-\frac 1 2}(\partial \Omega))$.
\item $M = \{ \sigma_T = 1 + (k-1)\chi_{T} : T \in \triangle^2, T \subseteq \Omega, \dist(T,\partial \Omega) > d_0/2\}$ for some fixed $d_0 > 0$ and $k >0$, $k \neq 1$.
\item $K = \{\sigma_T \in M : \dist(T,\partial \Omega) \ge d_0, |T| \ge d_1 \}$, for some $d_1 >0$.
\item $P^1_N\colon H^{\frac 1 2}(\partial \Omega) \to H^{\frac 1 2}(\partial \Omega)$ and $P^2_N\colon H^{-\frac 1 2}(\partial \Omega) \to H^{-\frac 1 2}(\partial \Omega)$ are bounded linear maps for $N \in \N$, such that $(P^j_N)^* = P^j_N$, $j=1,2,$ and $P^1_N \to I_{H^{\frac 1 2}(\partial \Omega)}$ and $P^2_N \to I_{H^{-\frac 1 2}(\partial \Omega)}$ strongly as $N \to +\infty$. In particular, since $H^s(\partial \Omega)$ is a Hilbert space for $s \in \R$, the $P^j_N$ can be chosen as orthogonal projections onto the space spanned by the first N elements of any orthonormal bases.
\item $Q_N y = P^2_N y P^1_N$ for $y \in Y$, as in Example \ref{ex:gal}.
\end{itemize}

This is a special case of the one considered in \cite{beretta2019}, where the following Lipschitz stability was proved:
\begin{equation}\label{eq:stab28}
\|\sigma_1-\sigma_2\|_{L^1(\Omega)} \le C \|\Lambda_{\sigma_1}-\Lambda_{\sigma_2}\|_{H^{\frac 1 2}(\partial \Omega) \to H^{-\frac 1 2}(\partial \Omega)}, \qquad \sigma_1, \sigma_2 \in K,
\end{equation}
where $C>0$ is a constant depending on $K$ and $M$. The techniques developed in this paper allows us to easily obtain the same stability in the case of a finite number of measurements.

\begin{theorem}\label{thm:cald}
Under the above assumptions, there exists $C>0$ depending only on $\Omega$, $d_0$, $d_1$ and $k$ such that 
\begin{equation}\label{est:Cald0}
\|\sigma_1-\sigma_2\|_{L^1(\Omega)} \le C \|Q_N(\Lambda_{\sigma_1})-Q_N(\Lambda_{\sigma_2})\|_{H^{\frac 1 2}(\partial \Omega) \to H^{-\frac 1 2}(\partial \Omega)},\qquad \sigma_1, \sigma_2 \in K,
\end{equation}
for every sufficiently large $N \in \N$.
\end{theorem}

The proof is presented in Section~\ref{sec:cald}. After having showed that $M$ is indeed a manifold with the required properties, the result follows from \eqref{eq:stab28} and Theorem~\ref{FM-2}. As far as we know, the only available stability result in this setting with finite measurements is \cite{Liu_2020} (see also \cite{friedman1989,isakov-powell-1990,1994-barcelo-fabes-seo} for uniqueness results), which, in contrast to ours, cannot be extended beyond polygonal-type assumptions.

\begin{remark}
We chose to consider a manifold of piecewise constant conductivities on triangles in order to keep the exposition as simple as possible. Since the Lipschitz stability result with the full DN map \cite{beretta2019} holds for general polygons, it is natural to ask if a discretized estimate \eqref{est:Cald0} could be obtained in that setting. We believe that this is possible thanks to Theorem~\ref{QNF-stability-multiple}. Indeed, the set of piece-wise constant conductivities with discontinuities on a single polygon, as considered in \cite{beretta2019}, can be seen as a (finite) disjoint union of manifolds of different dimensions.
\end{remark}

\subsection{The Gel'fand-Calder\'on problem with spherical inclusions} 

This is a close relative of the Calder\'on problem.
We consider the Dirichlet problem for the Schr\"odinger equation
\begin{equation*}
	\left\{\begin{array}{rl}
	-\Delta u+(\beta +q)u=0 & \text{ in } \Omega,
	\\
	u=\phi & \text{ on } \partial\Omega,
	\end{array}\right.
\end{equation*}
where $\phi\in H^{1/2}(\partial\Omega)$, $q\in L^\infty(\Omega)$ and $\beta\in L^\infty(\Omega)$ is a known background potential, with $\Omega \subseteq \R^3$ a bounded Lipschitz domain. We assume that $0$ is not a Dirichlet eigenvalue of $-\Delta +\beta +q$, so that the boundary value problem is well posed.

Under this assumption, we can define the Dirichlet-to-Neumann map
\[
\Lambda_q\colon H^{1/2}(\partial \Omega)\to H^{-1/2}(\partial \Omega),\qquad \Lambda_q(\phi) = \left.\frac{\partial u^\phi_q}{\partial \nu}\right|_{\partial \Omega},
\]
where $u^\phi_q$ is the unique solution of the Dirichlet problem above. Let us now state the related inverse boundary value problem.
 
\vspace*{.2cm}
\textbf{Gel'fand-Calder\'on's problem.} Given $\Lambda_q$, find $q$ in $\Omega$.
\vspace*{.2cm}

Also for this problem, many uniqueness and logarithmic stability results have been obtained \cite{Novikov1988,alessandrini1988,bukhgeim2008,novikov2010}, as well as many Lipschitz stability results for potentials belonging to a finite-dimensional space or with polyhedral type discontinuities (see \cite{beretta2013,beretta2015,alessandrini2018,2020-ruland-sincich} and references therein). All these results require the knowledge of the full DN map, i.e.\ an infinite number of measurements.\bigskip

As for the Calder\'on problem, we show how the results of Section~\ref{sec:Lip} can be used to derive a Lipschitz stability estimate with a finite number of measurements. 

We consider potentials $q$ that are piecewise constant with discontinuities on a finite number of (disjoint) balls, with unknown centers and radii. We also assume that the coefficient on each ball is unknown and varies continuously in a bounded interval. To the best of our knowledge, the latter assumption has never been considered in the literature and provides an additional challenge to the problem. Thus, in this setting, the stability with an infinite number of measurements is new as well.\smallskip

Let $a_{\rm max}>0$ and $0<\varrho_{\rm min}< \varrho_{\rm max}<+\infty$ and consider $L^\infty(\Omega)$ potentials of the form $\beta+q$, where
\begin{equation*}
	q = \sum_{k=1}^p\lambda_k\chi_{B(a_k,r_k)},
\end{equation*}
where $1\le p \le N_B$, for some known $N_B\in \N$, and $|\lambda_k|,r_k \in [\varrho_{\rm min},\varrho_{\rm max}]$ and $a_k \in \R^3$ with $|a_k|\le a_{\rm max}$ for $k=1,\dots,p$. We assume that $\overline{B(0,a_{\rm max}+\varrho_{\rm max})}\subseteq\Omega$ and
\begin{equation*}%\label{disjoint}
	r_k+r_\ell
    \le \eta
	|a_k-a_\ell|
	\ \text{ for every } \ k\neq\ell,
\end{equation*}
for some $\eta\in (0,1)$, so that the balls are contained in $\Omega$ (and at positive distance from $\partial\Omega$) and are pairwise disjoint. Moreover, we assume that there is a constant $D>0$ such that
\begin{equation*}
\|(-\Delta + \beta+q)^{-1}\|_{\L(H^{-1}(\Omega),H^1_0(\Omega))}\le D,
\end{equation*}
which, in particular, tells us that $\Lambda_q$ is well defined. We denote  the set of potentials $q$ satisfying the above conditions by $K$.\smallskip

We now consider the projections $Q_N y = P^2_N y P^1_N$ for $y \in \L(H^{\frac 12}(\partial \Omega),H^{-\frac 12}(\partial \Omega))$ as in section~\ref{sub:cald}.

We can now state the main stability result of the section.
\begin{theorem}\label{thm:gelf}
Under these assumptions, there exists $C>0$ depending only on $K$ such that 
\begin{equation*}
\|q_1-q_2\|_{L^1(\Omega)} \le C \|Q_N(\Lambda_{q_1})-Q_N(\Lambda_{q_2})\|_{H^{\frac 1 2}(\partial \Omega) \to H^{-\frac 1 2}(\partial \Omega)},\qquad q_1, q_2 \in K,
\end{equation*}
for every sufficiently large $N \in \N$.
\end{theorem}

The proof is given in Section~\ref{sec:gelf} and it is based on Theorem~\ref{QNF-stability-multiple}. Several preliminary lemmas are needed before we can check the assumptions of Theorem~\ref{QNF-stability-multiple}. The crucial step is the injectivity of the Frech\'et derivative of the DN map. This is obtained by combining a family of complex geometrical optics (CGO) solutions with a Runge approximation argument, together with some properties of Bessel functions.

\section{Reconstruction algorithm} \label{sec:rec}

The $\alpha$-H\"older stability estimate from \Cref{FM-2} can be used to design a reconstruction algorithm when $\alpha\in(\frac{1}{2},1]$. In this section, we slightly strengthen the assumptions of \Cref{FM-2} (regarding the regularity of $F$, $M$ and the map $Q$), and let
\begin{itemize}
\item $X$ and $Y$ be Banach spaces;
\item $Q:Y\to Y$ be a continuous finite-rank operator;
\item $A\subseteq X$ be an open set;
\item $M\subseteq A$ be an $n$-dimensional differentiable manifold $\alpha$-H\"older in $X$, for some $\alpha\in (\frac 12,1]$ with constant $\ell\in(0,1]$ as in \eqref{holder parametrization};
\item $K\subseteq M$ be a compact set;
\item $F\in C^1(M,Y)$ be such that
\begin{itemize}
\item $F|_K$ is Lipschitz continuous, namely,
\begin{equation}\label{F-lip}
	\|F(x)-F(y)\|_Y
	\le
	L\|x-y\|_X,
	\qquad x,y\in K,
\end{equation}
for some $L>0$;
\item $(F\circ\varphi_i^{-1})':\varphi_i(U_i)\to \mathcal{L}(\R^n,Y)$ is Lipschitz continuous for every $i\in I$, where $\{(U_i,\varphi_i)\}_{i\in I}$ is an atlas for $M$;
\item there exists $C>0$ such that
\begin{equation}\label{RA-QF-stability}
	\|x-y\|_X \le C \|QF(x)-QF(y)\|_Y^\alpha
	,
	\qquad x,y\in M.
\end{equation}
\end{itemize}
\end{itemize}

\begin{remark}
Since the range of $Q$ is finite dimensional, it is isomorphic to a finite-dimensional Euclidean space. In particular, in what follows, and without loss of generality, we assume that $Y$ is a Hilbert space.
\end{remark}

We denote  the unknown signal by $x^\dagger\in K$ and  the corresponding measurements by $QF(x^\dagger)$. We derive a global reconstruction algorithm which allows for the recovery of $x^\dagger$ from the knowledge of $QF(x^\dagger)$.

\subsection{The initial guess $x_0$}
To reconstruct $x^\dagger$ from its corresponding measurement $QF(x^\dagger)$, we perform an iterative method based on the Landweber iteration (see \cite[Theorem 3.2]{dehoop2012} and \cite[Proposition~10]{ALB-SAN}). Before doing this, we need to find a starting point $x_0\in K$ sufficiently close to the unknown $x^\dagger$ so that the following two conditions are satisfied.
\begin{enumerate}
\item\label{(1)} Both $x_0$ and $x^\dagger$ are contained in the same compact set $K_0\subseteq U_i$ for some $i\in I$. More precisely, by \Cref{one chart lemma} below, if $x_0\in K$ is a point satisfying
\begin{equation*}
	\|x_0-x^\dagger\|_X<\delta_{K,M}
\end{equation*}
($\delta_{K,M}$ is a positive constant depending only on $K$ and $M$), then there exists a compact set $K_j\subseteq K$ such that $x_0\in K_j$ and
\begin{equation*}
	x^\dagger
	\in
	K_0
	:\,=
	\overline{B_X(K_j,\delta_{K,M})}\cap K
	\subseteq
	U_i
\end{equation*}
for some $i\in I$.
\item\label{(2)} The Landweber iteration related to the minimization of
\begin{equation*}
	\min_{h\in\varphi_i(K_0)}\|Q(F\circ\varphi_i^{-1})(h)-QF(x^\dagger)\|_Y^2,
\end{equation*}
starting at $h_0=\varphi_i(x_0)\in\R^n$ converges. This is ensured when
\begin{equation*}
	\|x_0-x^\dagger\|_X
	<
	\omega^{-1}(\rho),
\end{equation*}
for a certain fixed $\rho>0$ (see \Cref{Landweber} below), where $\omega$ is the modulus of continuity given in \Cref{rem:modulus} below.
\end{enumerate}

In the following lemma we establish a condition to discriminate whether a point $x_0\in K$ satisfies these requirements by comparing the value $QF(x_0)$ with the measurement $QF(x^\dagger)$.

\begin{lemma}
If $x_0\in K$ satisfies
\begin{equation}\label{x0-condition}
	\|QF(x_0)-QF(x^\dagger)\|_Y
	<
	\bigg(\frac{\min\{\omega^{-1}(\rho),\delta_{K,M}\}}{C}\bigg)^{1/\alpha},
\end{equation}
then
\begin{equation}\label{x0-xt}
	\|x_0-x^\dagger\|_X
	<
	\min\{\omega^{-1}(\rho),\delta_{K,M}\}.
\end{equation}
\end{lemma}

\begin{proof}
The bound \eqref{x0-xt} is a direct consequence of \eqref{x0-condition} and the H\"older stability estimate for $QF$, \eqref{RA-QF-stability}.
\end{proof}

It only remains to show a procedure to choose a good candidate for the initial guess $x_0\in K$ satisfying \eqref{x0-condition}.

\begin{lemma}
Let $\{x_j\}_{j\in J}\subseteq K$ be a finite set of points satisfying
\begin{equation}\label{K-lattice}
	K
	\subseteq
	\bigcup_{j\in J}B_X(x_j,r),
\end{equation}
with
\begin{equation}\label{r}
	r
	=
	\frac{1}{L\|Q\|_{\L(Y,Y)}}\bigg(\frac{\min\{\omega^{-1}(\rho),\delta_{K,M}\}}{C}  \bigg)^{1/\alpha}.
\end{equation}
Then, inequality \eqref{x0-condition} holds for at least one point $x_0=x_j$ in the set.
\end{lemma}

The finite set of points $\{x_j\}_{j\in J} \subseteq K$, which exists since $K$ is compact, can be constructed by considering sufficiently fine lattices in $\varphi_i (U_i \cap K)$ mapped back to $M$ via $\varphi_i^{-1}$.

\begin{proof}
Since $x^\dagger\in K$, there exists $x_j$ in the lattice at a distance of at most $r>0$ from $x^\dagger$. Recalling \eqref{F-lip} we have
\begin{equation*}
	\|F(x_j)-F(x^\dagger)\|_Y
	\le
	L\|x_j-x^\dagger\|_X.
\end{equation*}
Then we can estimate
\begin{equation*}
	\|QF(x_j)-QF(x^\dagger)\|_Y
	\le
	L\|Q\|_{\mathcal{L}(Y,Y)}\|x_j-x^\dagger\|_X
	<
	L\|Q\|_{\mathcal{L}(Y,Y)}r,
\end{equation*}
so \eqref{x0-condition} follows for $x_0=x_j$.
\end{proof}

\subsection{Local reconstruction}

In the following proposition we provide an iterative method based on \cite{dehoop2012} to reconstruct $x^\dagger$ starting from a good approximation $x_0$.

\begin{proposition}\label{Landweber}
There exist $\rho,\mu>0$ and $c\in(0,1)$ such that the following is true. Let $x^\dagger\in K$ and $QF(x^\dagger)\in Y$. Suppose that $x_0\in K$ satisfies
\begin{equation*}
	\|x_0-x^\dagger\|_X
	<
	\min\{\omega^{-1}(\rho),\delta_{K,M}\}.
\end{equation*}
Let $i\in I$ be the index from condition \eqref{(1)} above and $\{x_k\}_k$ be the sequence of points defined by the recursive relation
\begin{equation*}
	\varphi_i(x_{k+1})
	=
	\varphi_i(x_k)-\mu(F\circ\varphi_i^{-1})'(\varphi_i(x_k))^*Q^*\big(QF(x_k)-QF(x^\dagger)\big).
\end{equation*}
Then $x_k\to x^\dagger$. More precisely, the convergence rate is given by
\begin{equation*}
	\|x_k-x^\dagger\|_X
	\le
	\frac{\rho c^k}{\ell},
	\qquad k\in\N,
\end{equation*}
if $\alpha=1$,  and
\begin{equation*}
	\|x_k-x^\dagger\|_X
	\le
	\frac{1}{\ell}\bigg(ck\frac{1-\alpha}{\alpha}+\rho^{-\frac{1-\alpha}{\alpha}}\bigg)^{-\frac{\alpha^2}{2(1-\alpha)}},
	\qquad k\in\N,
\end{equation*}
if $\alpha\in (\frac12,1)$.
\end{proposition}

\begin{proof}
First recall that the index $i\in I$ and the compact set $K_0\subseteq U_i$ were fixed in condition \eqref{(1)}.
Next define $\widetilde F=F\circ\varphi_i^{-1}\colon\varphi_i(U_i)\to Y$ and let $h_0=\varphi_i(x_0)$. The Landweber iteration for the minimization of
\begin{equation*}
	\min_{h\in\varphi_i(U_i)}\|Q\widetilde F(h)-QF(x^\dagger)\|_Y^2
\end{equation*}
reads
\begin{equation*}
	h_{k+1}
	=
	h_k-\mu\widetilde F'(h_k)^*Q^*\big(Q\widetilde F(h_k)-QF(x^\dagger)\big),
	\qquad k\in\N,
\end{equation*}
where $\mu>0$ is the step size. Since $\varphi_i|_{K_0}$ has modulus of continuity $\omega$ (see \Cref{rem:modulus} below), we have that
\begin{equation*}
	|h_0-\varphi_i(x^\dagger)|
	\le
	\omega(\|x_0-x^\dagger\|_X)
	<
	\rho,
\end{equation*}
and by \cite[Theorem 3.2]{dehoop2012} (see also \cite[Proposition~10]{ALB-SAN}), $h_k\to\varphi_i(x^\dagger)$. Moreover, if $\alpha=1$, the convergence rate is
\begin{equation*}
	|h_k-\varphi_i(x^\dagger)|
	\le
	\rho c^k,
\end{equation*}
while for $\alpha\in(\frac{1}{2},1)$,
\begin{equation*}
	|h_k-\varphi_i(x^\dagger)|
	\le
	\bigg(ck\frac{1-\alpha}{\alpha}+\rho^{-\frac{1-\alpha}{\alpha}}\bigg)^{-\frac{\alpha}{2(1-\alpha)}},
	\qquad k\in\N.
\end{equation*}

Finally, let us define $x_k=\varphi_i^{-1}(h_k)$ for every $k\in\N$ and observe that by continuity $x_k\to x^\dagger$. Moreover, by \eqref{holder parametrization}, the convergence rate is
\begin{equation*}
	\|x_k-x^\dagger\|_X
	\le
	\frac{1}{\ell}|h_k-\varphi_i(x^\dagger)|^\alpha,
	\qquad k\in\N,
\end{equation*}
and the proof is complete.
\end{proof}

\subsection{Global reconstruction}

We combine these two steps to obtain a global reconstruction algorithm, see \Cref{alg:rec}. Note that this algorithm can be split into an offline part and an online part. The offline part consists of the computation of $Q(F(x_j))$ for $j\in J$, which has to be done only once and can be done in parallel.
\begin{algorithm}
  \caption{Reconstruction of $x^\dag$ from $Q(F(x^\dag))$}
  \label{alg:rec}
  \begin{algorithmic}[1]
    \STATE Input $X$, $Y$, $M$, $\{(U_i,\varphi_i)\}_{i\in I}$, $K$, $Q$, $F$, $Q(F(x^\dag))$, $\alpha$, $\ell$, $\rho$, $\mu$, $C$ and $M$.
    \STATE Equip $\R^n$ and $Q(Y)$ with equivalent Euclidean scalar products.
    \STATE Find a finite set $\{x_j\}_{j\in J}\subseteq K$ so that \eqref{K-lattice} is satisfied with $r$ as in \eqref{r}.
     \FOR {$j\in J$}
    \STATE Compute $Q(F(x_j))$.
    \IF{\eqref{x0-condition} is satisfied with $x_0=x_j$} 
    \STATE Set $x_0=x_j$.
    \STATE Exit \textbf{for}.
    \ENDIF 
    \ENDFOR
    \STATE Choose $i\in I$ and $K_0\subseteq U_i$ as in condition \eqref{(1)}.
    \STATE Let $h_0=\varphi_i(x_0)$.
    \FOR {$k=0,1,2,\ldots$}
     \STATE Set $h_{k+1}=h_k-\mu(F\circ\varphi_i^{-1})'(h_k)^*Q^*\big(Q(F\circ\varphi_i^{-1})(h_k)-QF(x^\dagger)\big)$.
     \STATE Check the stopping criterion.
    \ENDFOR
    \STATE Output $x_{k+1}=\varphi_i^{-1}(h_{k+1})$.
  \end{algorithmic}
\end{algorithm}

%%%%%%%%%%%%%%%%%%%%%%%%%%%%%%%%%%%%
\section{Toy examples} \label{sec:exa}
%%%%%%%%%%%%%%%%%%%%%%%%%%%%%%%%%%%%

In this section, we consider several examples of manifolds $M$ and, in one case, of an operator $F$ satisfying the assumptions of our results. We do not aim at studying complicated and real-world scenarios, but rather at illustrating our results with simple, if not toy, case studies.

\subsection{Indicator functions on balls with variable centres and radii}\label{example-1}
Let
\[
M=\{\chi_{B(a,r)}\,:\, a\in\R^d,\; |a|<A,\; \varrho_0<r<\varrho_1\},
\]
where $A>0$ and $0<\varrho_0<\varrho_1$ are fixed parameters, and consider the function $\varphi\colon M\to\R^d\times\R$ given by $\varphi(\chi_{B(a,r)})=(a,r)$.

 \subsubsection*{$M$ is a differentiable manifold H\"older in $L^p=L^p(\R^d)$ for $p\in [1,+\infty)$}
 We show that the set $M$ together with the atlas $\{(M,\varphi)\}$ is an $(d+1)$-dimensional differentiable manifold in $L^p$ for finite $p$. Since $M\subseteq L^p$ for each $1\le p\le\infty$ and $\varphi\circ\varphi^{-1}= Id$ is the unique transition map, it only remains to study for which values of $p$ the function $\varphi$ is indeed a homeomorphism.
For $p=\infty$, the difference in $L^\infty$ between any two different elements in $M$ is equal to $1$, that is
\begin{equation*}
	\|\chi_{B(a_1,r_1)}-\chi_{B(a_2,r_2)}\|_{L^\infty}
	=
	1,
\end{equation*}
so $\varphi^{-1}$ is not continuous in $L^\infty$. As a consequence, $M\subseteq L^\infty$ is not a differentiable manifold in $L^\infty$.
For $1\le p<\infty$, it turns out that
\begin{equation*}
\begin{split}
	\|\chi_{B(a_1,r_1)}-\chi_{B(a_2,r_2)}\|_{L^p}
	=
	~&
	\left(\int_{\R^d}\big|\chi_{B(a_1,r_1)}(x)-\chi_{B(a_2,r_2)}(x)\big|^p\ dx\right)^{1/p}
	\\
	=
	~&
	|B(a_1,r_1)\triangle B(a_2,r_2)|^{1/p},
\end{split}
\end{equation*}
where $\triangle$ here denotes the symmetric difference.
In order to obtain estimates for the right-hand side above, we observe that 
\begin{multline*}
	\frac{1}{C}|(a_1,r_1)-(a_2,r_2)|
	\le
	|B(a_1,r_1)\triangle B(a_2,r_2)|
	\le
	C|(a_1,r_1)-(a_2,r_2)|,
	\\
	\quad |a_1|,|a_2|<A,\; r_1,r_2\in (\varrho_0,\varrho_1),	
\end{multline*}
for some constant $C=(d,A,\varrho_0,\varrho_1)>1$ (see \Cref{sec:estimates}).
Thus
\begin{equation}\label{B1-B2 in L^p}
	\|\chi_{B(a_1,r_1)}-\chi_{B(a_2,r_2)}\|_{L^p}
	\asymp\footnote{$f(x) \asymp g(x)$ for $x \in X$ means that there exists a constant $c\ge 1$ independent of $x$ such that $\frac 1 c g(x) \leq f(x) \leq c g(x)$ for every $x \in X$.}
	|(a_1,r_1)-(a_2,r_2)|^{1/p},
	\quad (a_1,r_1),(a_2,r_2)\in \varphi(M).
\end{equation}
In consequence, $\varphi$ is a homeomorphism, so $M\subseteq L^p$ is an $(d+1)$-dimensional differentiable manifold in $L^p$ for each $1\le p<\infty$. In addition, by \Cref{Lipschitz in X}, it turns out that $M$ is $\frac{1}{p}$-H\"older in $L^p$ for $1\le p<\infty$. 

\subsubsection*{$M$ is not embedded in $L^p$ for $p\in (1,+\infty)$}
Since
\begin{equation*}
		\frac{\|\chi_{B(a_1,r_1)}-\chi_{B(a_2,r_2)}\|_{L^p}}{|(a_1,r_1)-(a_2,r_2)|}
		\asymp
		|(a_1,r_1)-(a_2,r_2)|^{-(p-1)/p},
\end{equation*}
the function $\varphi^{-1}$ is not locally Lipschitz for any $1<p<\infty$. Hence $\varphi$ fails to be a diffeomorphism and, by \Cref{rem:embedded}, $M$ is not embedded in $L^p$ for any $1<p<\infty$.

\subsubsection*{$M$ is Lipschitz but not embedded in $L^1$}
For $p=1$, both $\varphi$ and $\varphi^{-1}$ are Lipschitz continuous, and according to \Cref{Lipschitz in X}, $M$ is Lipschitz in $L^1$. However, $M$ is not embedded in $L^1$. To see this, fix $r_1=r_2=1$ (without loss of generality, assume that $\varrho_0<1<\varrho_1$) and observe that $\varphi^{-1}$ is not Fr\'echet differentiable, so $\varphi$ is not a diffeomorphism in $L^1$. Indeed, the following holds for $h\in\R^d\setminus\{0\}$,
\begin{equation*}
				\lim_{t\to 0^+}
				\frac{\chi_{B(th,1)}(z)-\chi_{B(0,1)}(z)}{t}
				=
				\begin{cases}
				+\infty & \mbox{ if } |z|=1 \mbox{ and } \prodin{z}{h}>0,
				\\
				-\infty & \mbox{ if } |z|=1 \mbox{ and } \prodin{z}{h}<0,
				\\
				0 & \mbox{ elsewhere.}
				\end{cases}
\end{equation*}
The right-hand side in the previous equation defines a function which is zero almost everywhere. However, by \eqref{B1-B2 in L^p} we have
\begin{equation*}
	\bigg\|\frac{\chi_{B(th,1)}-\chi_{B(0,1)}}{t}\bigg\|_{L^1}
	\asymp
	|h|,
\end{equation*}
for every $t>0$, so $\varphi^{-1}$ is not Fr\'echet differentiable.
Thus $M$ is not embedded in $L^1$.

\subsection{Indicator functions on balls with variable centres, radii and intensities}\label{sect6.2}
Take $A>0$ and $0<\varrho_0<\varrho_1$ and let
\[
M=\{\lambda\chi_{B(a,r)}\,:\,a\in\R^d,\; |a|<A,\; \lambda,r\in (\varrho_0,\varrho_1)\}.
\]
 Consider the function $\varphi\colon M\to\R^d\times\R\times\R$ given by $\varphi(\lambda\chi_{B(a,r)})=(a,r,\lambda)$ for every $\lambda\chi_{B(a,r)}\in M$, so that $\varphi(M)=B(0,A)\times (\varrho_0,\varrho_1)^2$. 

\subsubsection*{$M$ is a differentiable manifold Lipschitz in $L^1=L^1(\R^d)$} 
We show that set $M$ together with the atlas $\{(M,\varphi)\}$ is an $(d+2)$-dimensional differentiable manifold in $L^1=L^1(\R^d)$. Since $M\subseteq L^1$ and $\varphi\circ\varphi^{-1}=Id$ is the unique transition map, it only remains to check that $\varphi$ is a homeomorphism. In fact, we claim that $\varphi$ is bi-Lipschitz, and in particular $M$ is Lipschitz in $L^1$.

Let $(a_1,r_1,\lambda_1),(a_2,r_2,\lambda_2)\in\varphi(M)$. Then
\begin{equation*}
\begin{split}
	\|\lambda_1\chi_{B(a_1,r_1)}-\lambda_2\chi_{B(a_2,r_2)}\|_{L^1}
	=
	~&
	\int_{\R^d}\big|\lambda_1\chi_{B(a_1,r_1)}(x)-\lambda_2\chi_{B(a_2,r_2)}(x)\big|\ dx
	\\
	=
	~&
	\lambda_1\,|B(a_1,r_1)\setminus B(a_2,r_2)|
	\\
	~&
	+\lambda_2\,|B(a_2,r_2)\setminus B(a_1,r_1)|
	\\
	~&
	+|\lambda_1-\lambda_2|\,|B(a_1,r_1)\cap B(a_2,r_2)|,
\end{split}
\end{equation*}
Recalling \eqref{estimate} we get
\begin{multline*}
	\lambda_1\,|B(a_1,r_1)\setminus B(a_2,r_2)|
	+\lambda_2\,|B(a_2,r_2)\setminus B(a_1,r_1)|
	\\
	\asymp
	|B(a_1,r_1)\triangle B(a_2,r_2)|
	\asymp
	|(a_1,r_1)-(a_2,r_2)|.
\end{multline*}
Furthermore, observe that
\begin{equation*}
	|B(a_1,r_1)\cap B(a_2,r_2)|
	\le
	\omega_d\max\{r_1,r_2\}^d
	\le \omega_d \varrho_1^d,
\end{equation*}
where $\omega_d$ stands for the Lebesgue measure of the unit ball in $\R^d$.
Combining these two estimates, we obtain that $\varphi^{-1}$ is Lipschitz continuous.

For the reverse inequality we distinguish three cases. First, if $|a_1-a_2|<|r_1-r_2|$ then either $B(a_1,r_1)\subseteq B(a_2,r_2)$ or $B(a_2,r_2)\subseteq B(a_1,r_1)$, and thus
\begin{equation*}
	|B(a_1,r_1)\cap B(a_2,r_2)|
	\ge
	\omega_d\min\{r_1,r_2\}^d
	\ge
	\omega_d \varrho_0^d.
\end{equation*}
On the other hand, if $|a_1-a_2|\ge|r_1-r_2|$ and $|a_1-a_2|<\varrho_0$, by using the argument at the beginning of subsection \ref{sub:B3} we have
\begin{equation*}
\begin{split}
	|B(a_1,r_1)\cap B(a_2,r_2)|
	\ge
	~&
	\omega_d\left(\frac{r_1+r_2-|a_1-a_2|}{2}\right)^d
	\\
	\ge
	~&
	\omega_d\left(\frac{2\varrho_0-|a_1-a_2|}{2}\right)^d
	>
	\omega_d\left(\frac{\varrho_0}{2}\right)^d.
\end{split}
\end{equation*}
Otherwise, if $|a_1-a_2|\ge|r_1-r_2|$ and $|a_1-a_2|\ge\varrho_0$, then we estimate $|B(a_1,r_1)\cap B(a_2,r_2)|\ge 0$ and thus there exists a constant $c>0$ such that
\begin{equation*}
\begin{split}
	\|\lambda_1\chi_{B(a_1,r_1)}-\lambda_2\chi_{B(a_2,r_2)}\|_{L^1}
	\ge
	~& c
	|(a_1,r_1)-(a_2,r_2)|
	\\
	\asymp
	~&
	|a_1-a_2|+|r_1-r_2|
	\\
	\ge
	~&
	\varrho_0+|r_1-r_2|
	\\
	\ge
	~&
	\varrho_0
	\\
	\ge
	~&
	\frac{\varrho_0}{\diam\varphi(M)}\,|(a_1,r_1,\lambda_1)-(a_2,r_2,\lambda_2)|.
\end{split}
\end{equation*}
Hence
\begin{equation}\label{eq:biLip}
\begin{split}
	\|\lambda_1\chi_{B(a_1,r_1)}-\lambda_2\chi_{B(a_2,r_2)}\|_{L^1}
	\asymp
	~&
	|(a_1,r_1)-(a_2,r_2)|
	+|\lambda_1-\lambda_2|
	\\
	\asymp
	~&
	|(a_1,r_1,\lambda_1)-(a_2,r_2,\lambda_2)|,
\end{split}
\end{equation}
that is, $\varphi$ is bi-Lipschitz in $M$. Thus $M$ is an $(d+2)$-dimensional differentiable manifold Lipschitz in $L^1$.

\subsection{Gaussians with different centres}
For $a\in\R^d$, let $G_a\colon \R^d\to\R$ be the function $G_a(z)=e^{-|z-a|^2}$. We define $M=\set{G_a}{a\in\R^d}$ and $\varphi\colon M\to\R^d$ as the function $\varphi(G_a)=a$. It is clear that $M\subseteq L^p=L^p(\R^d)$ for every $p\in[1,+\infty]$. In fact, we now show that $M$, together with the atlas $\{(M,\varphi)\}$, is a differentiable manifold embedded in $L^p$. For simplicity, we only treat the case $p\in [1,+\infty)$.

\subsubsection*{$\varphi^{-1}$ is Fr\'echet differentiable and $(\varphi^{-1})'(a)$ is injective for every $a\in\R^d$}
To see this we first show that $\varphi^{-1}\colon \R^d\to L^p$ is Fr\'echet differentiable, i.e.\ that there exists a linear map $(\varphi^{-1})'(a)$ at each $a\in\R^d$ satisfying
\begin{equation}\label{varphi-diff}
	\lim_{h\to 0}\frac{\|\varphi^{-1}(a+h)-\varphi^{-1}(a)-(\varphi^{-1})'(a)h\|_{L^p}}{|h|}
	=
	0,
\end{equation}
and then that $(\varphi^{-1})'(a)$ is  injective for each $a\in\R^d$.
To do this, we assume without loss of generality that $a=0$ and we observe that the gradient of $h\mapsto G_h(z)=e^{-|z-h|^2}=e^{-|z|^2}e^{-|h|^2+2\prodin{z}{h}}$ at $h=0$ is equal to $2e^{-|z|^2}z$. Thus
\begin{equation*}
	\lim_{h\to 0}\frac{\big|e^{-|z-h|^2}-e^{-|z|^2}-2e^{-|z|^2}\prodin{z}{h}\big|}{|h|}
	=
	0
\end{equation*}
for each $z\in\R^d$. In order to apply Lebesgue's dominated convergence theorem to obtain \eqref{varphi-diff} we need to show that the function 
\begin{equation*}
	z
	\longmapsto
	\frac{\big|e^{-|z-h|^2}-e^{-|z|^2}-2e^{-|z|^2}\prodin{z}{h}\big|}{|h|}
\end{equation*}
is bounded by a function in $L^p$ for every sufficiently small $|h|>0$. Indeed, by the triangle inequality,
\begin{equation*}
	\frac{\big|e^{-|z-h|^2}-e^{-|z|^2}-2e^{-|z|^2}\prodin{z}{h}\big|}{|h|}
	\le
	\frac{\big|e^{-|z-h|^2}-e^{-|z|^2}\big|}{|h|}+2|z|e^{-|z|^2}.
\end{equation*}
Notice that, for every $z\in\R^d$ the following holds
\begin{equation*}
\begin{split}
	\big|e^{-|z-h|^2}-e^{-|z|^2}\big|
	=
	~&
	e^{-|z|^2}\,\big|e^{|z|^2-|z-h|^2}-1\big|
	\\
	\le
	~&
	e^{-|z|^2}\,\big(e^{\left||z|^2-|z-h|^2\right|}-1\big)
	\\
	\le
	~&
	\left||z|^2-|z-h|^2\right|e^{\left||z|^2-|z-h|^2\right|-|z|^2}
	\\
	=
	~&
	\left||h|^2-2\prodin{z}{h}\right|e^{\left||h|^2-2\prodin{z}{h}\right|-|z|^2}
	\\
	\le
	~&
	|h|\big(|h|+2|z|\big)e^{|h|^2+2|z||h|-|z|^2},
\end{split}
\end{equation*}
where in the second inequality we have used that $e^t\leq1+te^t$ for every $t\geq0$. Thus, for every $0<|h|<1$ we have
\begin{equation*}
\begin{split}
	\frac{\big|e^{-|z-h|^2}-e^{-|z|^2}\big|}{|h|}
	\le
	~&
	\big(1+2|z|\big)e^{1+2|z|-|z|^2}
	=
	e^2\big(1+2|z|\big)e^{-(|z|-1)^2}.
\end{split}
\end{equation*}
Summarizing,
\begin{equation*}
	\frac{\big|e^{-|z-h|^2}-e^{-|z|^2}-2e^{-|z|^2}\prodin{z}{h}\big|}{|h|}
	\le
	e^2\big(1+2|z|\big)e^{-(|z|-1)^2}+2|z|e^{-|z|^2}
\end{equation*}
for every $|h|<1$, where the right-hand side is in $L^p$. Hence, by Lebesgue's dominated convergence theorem,
\begin{equation*}
	\lim_{h\to 0}\frac{1}{|h|^p}\int_{\R^d}\big|e^{-|z-h|^2}-e^{-|z|^2}-2e^{-|z|^2}\prodin{z}{h}\big|^p\ dz
	=
	0,
\end{equation*}
and \eqref{varphi-diff} follows with $(\varphi^{-1})'(a)=\{z\mapsto2e^{-|z-a|^2}\prodin{z-a}{\cdot}\}$, which is an injective linear map in $\L(\R^d,L^p)$.

\subsubsection*{$M$ is a differentiable manifold embedded in $L^p$}
It is easy to show that $(\varphi^{-1})'$ is continuous. We can now apply the inverse function theorem and obtain that $\varphi^{-1}$ is a diffeomorphism in a neighbourhood of every $a\in\R^d$. In particular, $\varphi^{-1}\colon \R^d\to M$ is a diffeomorphism and so $M$ is a differentiable manifold embedded in $L^p$. In particular, the tangent space $T_{G_a}M$ is contained in $L^p$,
\begin{equation*}
				T_{G_a}M
				=
				\set{z\mapsto 2e^{-|z-a|^2}\prodin{z-a}{h}}{h\in\R^d}
				\subseteq
				L^p(\R^d).
\end{equation*}

\subsubsection*{$M$ is Lipschitz in $L^p$}
By the mean value theorem for Gateaux differentiable functions between Banach spaces we obtain that for every $a,b\in\R^d$ there exists $c\in\R^d$ such that
\[
\|\varphi^{-1}(a)-\varphi^{-1}(b)\|_{L^p}\le \|(\varphi^{-1})'(c)\|_{\L(\R^d,L^p)}|a-b| =  \|(\varphi^{-1})'(0)\|_{\L(\R^d,L^p)}|a-b|,
\]
where we use the fact that, by construction, the norm of $(\varphi^{-1})'(c)$ is translation invariant.

\subsubsection*{$\varphi\colon M\to\R^d$ is uniformly Lipschitz}
We already know that $\varphi$ is Lipschitz continuous, since it is a diffeomorphism. However, adapting the argument used to show that $\varphi^{-1}$ is uniformly Lipschitz, we immediately derive that $\varphi$ is uniformly Lipschitz too.

\subsection{A classical inverse problem}
In this example we consider the classical inverse problem of differentiation. We show that, even if this inverse problem is notoriously ill-posed, Lipschitz stability is restored by restricting the unknown to a finite-dimensional manifold.

\subsubsection*{The manifold $M$}

Let $X=Y=L^1=L^1([0,1])$, fix $\eps\in(0,\frac12)$ and consider the set
\begin{equation*}
	M
	=
	\{\chi_{[a,b]}\,:\,a,b\in(0,1),\ b-a>\eps\}
	\subseteq
	L^1.
\end{equation*}
Let $\varphi\colon M\to\R^2$ be the function given by
\begin{equation*}
	\varphi(\chi_{[a,b]})
	=
	(a,b).
\end{equation*}

\subsubsection*{The map $\varphi$ is bi-Lipschitz, and $M$ is a differentiable manifold Lipschitz in $L^1$}
For $\chi_{[a_1,b_1]},\chi_{[a_2,b_2]}\in M$, observe that
\begin{equation*}
	\|\chi_{[a_1,b_1]}-\chi_{[a_2,b_2]}\|_{L^1}
	=
	\big|[a_1,b_1]\triangle[a_2,b_2]\big|.
\end{equation*}
If $[a_1,b_1]\cap[a_2,b_2]\neq\emptyset$, then
\begin{equation*}
	\big|[a_1,b_1]\triangle[a_2,b_2]\big|
	=
	|a_1-a_2|+|b_1-b_2|.
\end{equation*}
On the other hand, if $[a_1,b_1]\cap[a_2,b_2]=\emptyset$, then
\begin{equation*}
	\big|[a_1,b_1]\triangle[a_2,b_2]\big|
	=
	b_1-a_1+b_2-a_2
	\le
	|a_1-a_2|+|b_1-b_2|,
\end{equation*}
and for the other inequality, since $b_1-a_1,b_2-a_2\ge\eps$ and $a_1,a_2,b_1,b_2\in[0,1]$,
\begin{equation*}
	\big|[a_1,b_1]\triangle[a_2,b_2]\big|
	=
	b_1-a_1+b_2-a_2
	\ge
	2\eps
	\ge
	\eps(|a_1-a_2|+|b_1-b_2|).
\end{equation*}
Thus
\begin{equation*}
	\eps(|a_1-a_2|+|b_1-b_2|)
	\le
	\|\chi_{[a_1,b_1]}-\chi_{[a_2,b_2]}\|_{L^1}
	\le
	|a_1-a_2|+|b_1-b_2|
\end{equation*}
for $\chi_{[a_1,b_1]},\chi_{[a_2,b_2]}\in M$, so $\varphi$ is bi-Lipschitz and $M\subseteq L^1$ is a $2$-dimensional differentiable manifold $1$-H\"older (Lipschitz) in $L^1$. 
\subsubsection*{The operator $F$}
Next we define $F\colon L^1\to L^1$ as the function
\begin{equation*}
	u\longmapsto
	F(u)(t)
	:\,=
	\int_0^tu(s)\ ds, \quad t\in[0,1].
\end{equation*}

\subsubsection*{The differential of $F$}
A direct computation shows
\begin{equation*}
	F(\chi_{[a,b]})(t)
	=
	F(\chi_{[a,1]})(t)-F(\chi_{[b,1]})(t)
	=
	(t-a)\chi_{[a,1]}(t)-(t-b)\chi_{[b,1]}(t).
\end{equation*}
Since $F$ is linear, then the Fr\'echet derivative of $F$ at $u\in L^1$ coincides with $F$, that is, $F'(u)\equiv F$ for each $u\in L^1$. However, since $M$ is not embedded in $L^1$ (see \Cref{example-1}), we cannot define $dF_{\chi_{[a,b]}}$ as the restriction of $F'(\chi_{[a,b]})$ to $T_{\chi_{[a,b]}}M$ due to the fact that the tangent space is not contained in $L^1$.
In order to compute $dF_{\chi_{[a,b]}}$ we first need to check that $F\circ\varphi^{-1}$ is Fr\'echet differentiable (see \Cref{sub:diff}). That is, we need to show that there exists a linear map $A\colon\R^2\to L^1$ such that
\begin{equation}\label{diff-Ah}
	\lim_{\mu\to 0^+}\frac{1}{\mu}\left\|(F\circ\varphi^{-1})(a+\mu h_1,b+\mu h_2)-(F\circ\varphi^{-1})(a,b)-\mu Ah\right\|_{L^1}
	=
	0,
\end{equation}
for $h=(h_1,h_2)\in\R^2\setminus\{0\}$. 
We start from the ansatz
\begin{multline*}
	Ah(t)
	=
	\lim_{\mu\to 0^+}\frac{(F\circ\varphi^{-1})(a+\mu h_1,b+\mu h_2)(t)-(F\circ\varphi^{-1})(a,b)(t)}{\mu}
	\quad \text{a.e. }t\in[0,1].
\end{multline*}
Observe that the right-hand side above is equal to the derivative at $\mu=0$ of
\begin{equation*}
\begin{split}
\mu\longmapsto
	&(F\circ\varphi^{-1})(a+\mu h_1,b+\mu h_2)(t)
	\\
	&=
	F(\chi_{[a+\mu h_1,b+\mu h_2]})(t)
	\\
	&=
	(t-a-\mu h_1)\chi_{[a+\mu h_1,1]}(t)-(t-b-\mu h_2)\chi_{[b+\mu h_2,1]}(t).
\end{split}
\end{equation*}
Hence,
\begin{equation*}
	Ah(t)
	=
	\begin{cases}
	0 & \text{ if } t\in[0,a),
	\\
	-h_1 & \text{ if } t\in(a,b),
	\\
	h_2-h_1 & \text{ if } t\in(b,1].
	\end{cases}
\end{equation*}
In other words,
\begin{equation*}
	Ah
	=
	h_2\chi_{[b,1]}-h_1\chi_{[a,1]}
	\qquad \text{a.e. }t\in[0,1].
\end{equation*}
We check that, in fact, $A$ is the Fr\'echet differential of $F$:
\begin{equation*}
\begin{split}
	&\left\|(F\circ\varphi^{-1})(a+\mu h_1,b+\mu h_2)-(F\circ\varphi^{-1})(a,b)-\mu Ah\right\|_{L^1}
	\\
&\qquad	=
	\int_\R\big|(t-a-\mu h_1)(\chi_{[a+\mu h_1,1]}(t)-\chi_{[a,1]}(t))
	\\
	~&
	\hspace{40pt}-(t-b-\mu h_2)(\chi_{[b+\mu h_2,1]}(t)-\chi_{[b,1]}(t))\big|\ dt
	\\
	&\qquad
	\le
	\bigg|\int_a^{a+\mu h_1}|t-a-\mu h_1|\ dt\bigg|
	+
	\bigg|\int_b^{b+\mu h_2}|t-b-\mu h_2|\ dt\bigg|
	\\
	&\qquad
	=
	\int_0^{\mu|h_1|}s\ ds
	+
	\int_0^{\mu|h_2|}s\ ds
	=
	\frac{\mu^2|h|^2}{2},
\end{split}
\end{equation*}
so \eqref{diff-Ah} holds.
Thus $F\circ\varphi^{-1}$ is Fr\'echet differentiable and
\begin{equation*}
	dF_{\chi_{[a,b]}}(h_1,h_2)
	=
	h_2\chi_{[b,1]}-h_1\chi_{[a,1]},
	\quad (h_1,h_2)\in\R^2,
\end{equation*}
which  is injective in $\R^2$ (since $a<b<1$).

\subsubsection*{The operator $F$ is of class $C^1(M,L^1)$}
By \Cref{F in M diffble}, we need to show that $(F\circ\varphi^{-1})'\colon\varphi(M)\to\L(\R^2,L^1)$ is continuous, where
\begin{equation*}
	\varphi(M)
	=
	\{(a,b)\in\R^2\,:\,a,b\in(0,1),\ b-a>\eps\}
\end{equation*}
and
\begin{equation*}
	(F\circ\varphi^{-1})'(a,b)
	=
	dF_{\chi{[a,b]}}.
\end{equation*}
In fact, if $(a,b),(a',b')\in\varphi(M)$, then
\begin{equation*}
\begin{split}
	\left\|\big(dF_{\chi_{[a,b]}}-dF_{\chi{[a',b']}}\big)(h_1,h_2)\right\|_{L^1}
	=
	~&
	\left\|h_2(\chi_{[b,1]}-\chi_{[b',1]})-h_1(\chi_{[a,1]}-\chi_{[a',1]})\right\|_{L^1}
	\\
	\le
	~&
	|h_1|\,|a-a'|+|h_2|\,|b-b'|
	\\
	\le
	~&
	(|h_1|+|h_2|)(|a-a'|+|b-b'|)
\end{split}
\end{equation*}
for every $(h_1,h_2)\in\R^2$. Thus
\begin{equation*}
	\left\|dF_{\chi_{[a,b]}}-dF_{\chi{[a',b']}}\right\|_{\L(\R^2,L^1)}
	\le
	\sqrt{2}(|a-a'|+|b-b'|),
\end{equation*}
and the continuity of $dF$ in $\L(\R^2,L^1)$ follows.

\subsubsection*{Lipschitz stability with infinite-dimensional measurements}
Let $K\subseteq M$ be a compact set. For example, we can take
\[
	K
	=
	\{\chi_{[a,b]}\,:\,a,b\in[\eps,1-\eps],\ b-a\geq2\eps\}.
\]
By \Cref{MAINTHM1}, $F$ satisfies the following Lipschitz stability estimate for some constant $C>0$,
\begin{equation*}
	\|\chi_{[a_1,b_1]}-\chi_{[a_2,b_2]}\|_{L^1}
	\le
	C
	\|F(\chi_{[a_1,b_1]})-F(\chi_{[a_2,b_2]})\|_{L^1},
	\quad \chi_{[a_1,b_1]},\chi_{[a_2,b_2]}\in K.
\end{equation*}

\subsubsection*{Lipschitz stability with finitely many measurements}
Furthermore, for $N\in\N$ let us define the map $Q_N\colon L^1\to L^1$ as the convolution $Q_Nu=\F_N*u$, where the so-called Fej\'er kernel is given by
\begin{equation*}
	\F_N(t)
	=
	\sum_{k=-N}^N\bigg(1-\frac{|k|}{N+1}\bigg)e^{2\pi ikt}.
\end{equation*}
Constructed in this way, it turns out that $Q_N$ satisfies the conditions from \Cref{Q_N} with $\widetilde Y=Y=L^1$. Indeed, since $\F_N*u$ converges to $u$ in $L^1$ for every $u\in L^1$, then
\begin{equation*}
	\lim_{N\to\infty}
	\|u-Q_Nu\|_{L^1}
	=
	0
\end{equation*}
for every $u\in L^1$. On the other hand,
\begin{equation*}
	\|Q_Nu\|_{L^1}
	=
	\|\F_N*u\|_{L^1}
	\le
	\|\F_N\|_{L^1}\|u\|_{L^1}=\|u\|_{L^1},
\end{equation*}
so
\begin{equation*}
	\|Q_N\|_{\L(L^1,L^1)}
	\le
	1,
	\qquad
	N\in\N.
\end{equation*}
As a  consequence, by virtue of \Cref{QNF-stability}, for $N$ and $C$ large enough, $Q_NF$ satisfies the Lipschitz stability estimate \eqref{QNF-stability-estimate}
\begin{equation*}
			\|x-y\|_X \le C	\|Q_NF(x)-Q_NF(y)\|_Y
								,\qquad x,y\in K.
\end{equation*}
It is worth observing that measuring $Q_Nz$ for a certain $z\in L^1$ corresponds to measuring a low-frequency approximation of $z$.

%%%%%%%%%%%%%%%%%%%%%%%%%%%%%%%%%%%%
\section{Proofs: infinite-dimensional measurements}\label{sec:proofs-1}
%%%%%%%%%%%%%%%%%%%%%%%%%%%%%%%%%%%%

In this section we prove Theorems~\ref{THM infinite measurements without compactness} and \ref{MAINTHM1}.

\subsection{Lipschitz stability estimate in the large distance case}

One of the common elements in the theorems in \Cref{subsec:infinity-meas} is the injectivity assumption on the function $F$. This hypothesis, together with the continuity of $F$, is crucial to obtain the Lipschitz stability estimate in the \emph{large distance} case, that is, when the distance between $x$ and $y$ in $K\subseteq X$ compact is uniformly bounded away from zero.

\begin{lemma}\label{large distance lemma}
Let $X$ and $Y$ be Banach spaces, $K\subseteq X$ be a compact set, $F\colon K\to Y$ be a continuous and injective function and $\delta>0$. There exists a constant $C>0$ such that
\begin{equation}\label{Lipest>delta}
	\|x-y\|_X \le C\|F(x)-F(y)\|_Y
\end{equation}
for every $x,y\in K$ such that $\|x-y\|_X\ge\delta$. 
\end{lemma}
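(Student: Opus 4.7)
The plan is to exploit compactness together with injectivity in the most direct way: rewrite the desired inequality as a lower bound on $\|F(x)-F(y)\|_Y$ on the set of pairs at distance at least $\delta$, and extract that lower bound from a continuous positive function on a compact set.

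More precisely, I would first introduce the set
\[
E = \{(x,y)\in K\times K : \|x-y\|_X \geq \delta\}.
\]
Since $K\times K$ is compact in $X\times X$ and the map $(x,y)\mapsto\|x-y\|_X$ is continuous, $E$ is a closed subset of a compact set, hence compact. If $E=\emptyset$, the statement is vacuous and any $C>0$ works; so I will assume $E\neq\emptyset$.

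Next, I would consider the map
\[
g\colon E \to \R,\qquad g(x,y) = \|F(x)-F(y)\|_Y.
\]
The continuity of $F$ and of the norm yield that $g$ is continuous on $E$. Moreover, for every $(x,y)\in E$ we have $\|x-y\|_X\geq\delta>0$, so $x\neq y$, and the injectivity of $F$ gives $g(x,y)>0$. Since $E$ is compact, $g$ attains its minimum $m = \min_{E} g$, and by the above $m>0$.

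Finally, since $K$ is compact, $\diam K<\infty$, and for every $(x,y)\in E$ one has $\|x-y\|_X\leq\diam K$ and $\|F(x)-F(y)\|_Y\geq m$, which together give
\[
\|x-y\|_X \leq \diam K \leq \frac{\diam K}{m}\,\|F(x)-F(y)\|_Y.
\]
Thus the conclusion holds with $C = \diam K / m$. There is essentially no obstacle here: the only point that requires a line of justification is the strict positivity of the minimum, which is where injectivity of $F$ is used crucially together with the fact that $\|x-y\|_X\geq\delta>0$ rules out the diagonal.
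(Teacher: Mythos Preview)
Your proof is correct and follows essentially the same approach as the paper's: define the compact set of pairs at distance at least $\delta$, take the minimum of the continuous map $(x,y)\mapsto\|F(x)-F(y)\|_Y$ on it, use injectivity to get this minimum positive, and conclude with $C=\diam K/m$. The paper's argument is identical in structure and in the choice of constant.
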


\begin{proof}
If $\set{(x,y)\in K\times K}{\|x-y\|_X\ge\delta}=\emptyset$, the result is trivial. Otherwise, observe that since the set $\set{(x,y)\in K\times K}{\|x-y\|_X\ge\delta}$ is compact and the function $(x,y)\longmapsto\|F(x)-F(y)\|_Y$ is continuous, we can define
\begin{equation*}
	C'
	:\,=
	\min\set{\|F(x)-F(y)\|_Y}{x,y\in K\ \text{ s.t. }\ \|x-y\|_X\ge\delta}.
\end{equation*}
Then the injectivity of $F$ yields that $C'>0$ and \eqref{Lipest>delta} follows with $C=\diam K/C'$.
\end{proof}

The previous lemma shows that the analysis can be restricted to the case where $x,y\in K$ are arbitrarily close, in which case we need to impose certain conditions for obtaining Lipschitz stability.

\subsection{Proof of \Cref{THM infinite measurements without compactness}}

In order to deal with the lack of convexity in the assumptions of \Cref{THM infinite measurements without compactness}, we show that it is possible to extend the compact set $K$ to a bigger compact subset of the manifold containing all line segments between points in $K$ of small length. This property, which can be understood as some sort of \emph{short distance convexity}, turns out to be enough for our purposes. Here and in the rest of the paper we use the following notation: for $S\subseteq X$ and $\delta>0$, we let
\[
B_X(S,\delta) = \bigcup_{x\in S} B_X(x,\delta)
\]
denote the $\delta$-neighbourhood of $S$.

\begin{lemma}\label{delta-K}
Let $X$ be a Banach space,  $A\subseteq X$ be an open set, $W\subseteq X$ be an $n$-dimensional subspace and $K\subseteq W\cap A$ be a compact subset. There exist $\delta_K\in(0,\diam K]$ and a compact set $\widehat K\subseteq W\cap A$ such that $(1-t)x+ty\in\widehat K$ for all $t\in[0,1]$ and every $x,y\in K$ satisfying $\|x-y\|_X\le\delta_K$.
\end{lemma}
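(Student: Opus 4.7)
The plan is to define $\widehat K$ explicitly as the intersection of $W$ with a closed $\delta_K$-neighborhood of $K$ in $X$, namely $\widehat K := \overline{B_X(K, \delta_K)} \cap W$, which is the formula already anticipated in \Cref{rem:N}. The argument will rest on two standard ingredients: a compactness/openness argument to choose $\delta_K$ so that $\widehat K$ stays inside $A$, and the fact that closed bounded sets in finite-dimensional spaces are compact.

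The first step is to pick $\delta_K$. I would use the topological fact that a compact subset $K$ of an open set $A$ is at positive distance from $X \setminus A$: when $A = X$ this is vacuous, and otherwise $r := \dist(K, X \setminus A) > 0$ because $K$ is compact and $X \setminus A$ is closed and disjoint from $K$. Setting $\delta_K := \min(r/2, \diam K)$ then lands in $(0, \diam K]$ as required, provided $\diam K > 0$; the degenerate case $\diam K = 0$ is trivial, since no nontrivial pairs $x \ne y$ arise.

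With $\delta_K$ fixed, I would verify the three required properties in turn. Compactness of $\widehat K$ follows from the fact that $W$ is closed in $X$ (being finite-dimensional), so $\widehat K$ is closed as an intersection of two closed sets; it is bounded because $K$ is bounded and we have only added a fixed-radius neighborhood; Heine--Borel applied inside the finite-dimensional subspace $W$ then yields compactness. The inclusion $\widehat K \subseteq A$ follows because any $z \in \widehat K$ satisfies $\dist(z, K) \le \delta_K < r$, which puts $z$ inside $A$. Finally, for the segment condition: given $x, y \in K$ with $\|x - y\|_X \le \delta_K$ and $t \in [0, 1]$, the point $(1-t)x + ty$ lies in $W$ by linearity, and
\[
\|(1-t)x + ty - x\|_X = t\|y - x\|_X \le \delta_K,
\]
so $(1-t)x + ty \in \overline{B_X(K, \delta_K)} \cap W = \widehat K$.

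The argument is essentially topological, so I do not anticipate any serious obstacle. The only delicate point is the simultaneous constraint that $\delta_K$ be small enough to keep $\widehat K$ inside $A$ yet still satisfy $\delta_K \le \diam K$; both can be enforced by taking the minimum as above.
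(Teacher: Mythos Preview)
Your proposal is correct and follows essentially the same approach as the paper: both define $\widehat K = \overline{B_X(K,\delta_K)}\cap W$ and choose $\delta_K$ as half the distance from $K$ to the complement of $A$, capped by $\diam K$. The only cosmetic difference is that the paper measures distance to $W\setminus A$ (treating the case $W\subseteq A$ separately via the convex hull of $K$), whereas you measure distance to $X\setminus A$; your choice handles all cases uniformly and is equally valid.
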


\begin{proof}
If $K$ is a singleton, the result is immediate. We assume that $\diam K>0$.

We begin by observing that in the case in which $W\cap A=W$, we just simply define $\widehat K$ as the convex hull of $K$, so the result follows for any $\delta_K>0$.
On the other hand, if $W\setminus A\neq\emptyset$ we set
\[
\tilde\delta_K:\,=\frac{1}{2}\dist(K,W\setminus A)=\frac{1}{2}\inf\set{\|x-w\|_X}{x\in K,\ w\in W\setminus A},
\]
and $\delta_K=\min\{\tilde\delta_k,\diam K\}$.
Since $K$ is compact and $W\setminus A$ is closed, then $\delta_K>0$. Observe that for every $x\in K$, $w\in W\setminus A$ and $y\in B_X(x,\delta_K)\cap W$ we have that
\begin{equation*}
		2\delta_K
		\le
		\|x-w\|_X
		\le
		\|x-y\|_X+\|y-w\|_X
		<
		\delta_K+\|y-w\|_X,
\end{equation*}
so $\|y-w\|_X>\delta_K$. As a consequence, we have that
\begin{equation*}
	\widehat{K}
	:\,=
	\overline{B_X(K,\delta_K)}\cap W
	\subseteq
	W\cap A.
\end{equation*}
Furthermore, the line segment between $x$ and $y$ is strictly contained in $\widehat{K}$ for every $x,y\in K$ satisfying $\|x-y\|_X\le\delta_K$. Finally, since $K$ is contained in a finite-dimensional subspace $W$, to see that $\widehat K$ is compact it is enough to check that $\widehat K$ is bounded and closed. This follows from the fact that $\widehat K$ is the set of points which are at a distance of at most $\delta_K$ from a point in the compact set $K$.
\end{proof}

It is worth to mention that, despite the fact that it might not be convex, the compact set $\widehat K$ contains every closed line segment between close points in $K$, and the same argument in the proof of \cite[Theorem 2.1]{BOU} is still valid if instead of $K$ we consider its extension $\widehat K$. However, for the sake of completeness and the benefit of the reader, we have decided to include the proof of the Lipschitz stability estimate here.

\begin{proof}[Proof of \Cref{THM infinite measurements without compactness}]
By \Cref{large distance lemma}, we can assume that $x,y\in K$ are given such that $\|x-y\|_X<\delta$ for some fixed $\delta\in(0,\delta_K]$ to be determined later. Then, by \Cref{delta-K}, the closed line segment between $x$ and $y$ is contained in a compact set $\widehat K\subseteq W\cap A$ , i.e. $\gamma(t):\,=(1-t)x+ty\in\widehat K$ for every $t\in[0,1]$. Recalling the fundamental theorem of calculus we can write
\begin{equation*}
	F(y)-F(x)
	=
	\int_0^1(F\circ\gamma)'(t)\ dt
	=
	\int_0^1F'(\gamma(t))(y-x)\ dt,
\end{equation*}
where in the second equality we have used the fact that $F\in C^1(A,Y)$.
Therefore,
\begin{equation*}
	F'(x)(x-y)
	=
	F(x)-F(y)+\int_0^1\big[F'(x)-F'(\gamma(t))\big](x-y)\ dt,
\end{equation*}
and taking norms we get
\begin{equation*}
	\|F'(x)(x-y)\|_Y
	\le
	\|F(x)-F(y)\|_Y
	+
	\int_0^1\|F'(x)-F'(\gamma(t))\|_{\L(W,Y)}\,\|x-y\|_X\ dt.
\end{equation*}
A rearrangement and an estimation of the terms gives us the following inequality,
\begin{equation*}
	\frac{\|F(x)-F(y)\|_Y}{\|x-y\|_X}
	\ge
	\inf_{z\in\S_W}\big\{\|F'(x)z\|_Y\big\}
	-
	\sup_{t\in[0,1]}\|F'(x)-F'(\gamma(t))\|_{\L(W,Y)},
\end{equation*}
which holds for every $x,y\in K$ such that $\|x-y\|_X<\delta$,
where $\S_W=\S_X\cap W$ is the unit sphere of $W$. The injectivity of $F'(x)$ in $W$ together with the compactness of $K$ and $\S_W$ yields that
\begin{equation*}
	C'
	:\,=
	\frac{1}{2}\inf_{x\in K,\,z\in\S_W}\|F'(x)z\|_Y
	>
	0.
\end{equation*}
On the other hand, since $F\in C^1(A,Y)$, $\widehat{K}\subseteq W\cap A$ is compact and $\gamma(t)\in\widehat{K}$ for every $t\in[0,1]$, there exists a non-decreasing modulus of continuity $\omega_{F',\widehat{K}}$ such that
\begin{equation*}
	\|F'(x)-F'(\gamma(t))\|_{\L(W,Y)}
	\le
	\omega_{F',\widehat{K}}(\|x-\gamma(t)\|_X)
	\le
	\omega_{F',\widehat{K}}(\|x-y\|_X)
	\le
	\omega_{F',\widehat{K}}(\delta)
\end{equation*}
for every $t\in[0,1]$.
Then, choosing a small enough $\delta\in(0,\delta_K]$ such that $\omega_{F',\widehat{K}}(\delta)\le C'$ we obtain \eqref{Lip-stability-1} with $C=1/C'$ for every $x,y\in K$ such that $\|x-y\|_X<\delta$. 
\end{proof}

As an immediate consequence of \Cref{THM infinite measurements without compactness}, we obtain the following corollary in the particular case $X=\R^n$.

\begin{corollary}\label{COR-1}
Let $Y$ be a Banach space, $A\subseteq\R^n$ an open set and $K\subseteq A$ a compact subset. Consider $F\in C^1(A,Y)$ satisfying that
\begin{enumerate}
\item $F$ is injective;
\item $F'(x)\in\mathcal{L}(\R^n,Y)$ is injective for every $x\in A$.
\end{enumerate}
Then there exists $C>0$ such that
\begin{equation*}
				|x-y|\le C \|F(x)-F(y)\|_Y
				,\qquad x,y\in K.
\end{equation*}
\end{corollary}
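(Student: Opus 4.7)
The plan is to obtain \Cref{COR-1} as a direct specialization of \Cref{THM infinite measurements without compactness} with the choice $X=W=\R^n$, so that the ambient Banach space coincides with the finite-dimensional subspace featured in the more general theorem. With this choice, $W\cap A = A$, and the compact set $K\subseteq A$ from the corollary already plays the role of the compact set $K\subseteq W\cap A$ in the theorem. The norm $\|\cdot\|_X$ becomes the Euclidean norm $|\cdot|$, so the conclusion \eqref{Lip-stability-1} translates verbatim into the estimate claimed in the corollary.

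Next I would check that the two hypotheses of \Cref{THM infinite measurements without compactness} are subsumed by those of the corollary. Condition (1) of the theorem asks for injectivity of $F\big|_K$, which is immediate from the stronger assumption that $F$ is injective on all of $A$. Condition (2) of the theorem requires $F'(x)\in\L(X,Y)$ to be injective on $W$ for every $x\in W\cap A$; with $X=W=\R^n$ and $W\cap A=A$, this is identical to condition (2) of the corollary. The differentiability assumption $F\in C^1(A,Y)$ is the same in both statements.

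Since every hypothesis of \Cref{THM infinite measurements without compactness} is verified in this setting, the conclusion follows at once with the same constant $C>0$ produced by the theorem. There is no genuine obstacle here: the only thing to notice is that the role of the ``subspace $W\subseteq X$'' in the general statement collapses to $W=X=\R^n$, which removes the need to carry along an ambient non-finite-dimensional space. In particular, no separate argument or extension of the proof is required.
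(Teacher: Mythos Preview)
Your proposal is correct and matches the paper's own approach exactly: the paper states that \Cref{COR-1} is ``an immediate consequence of \Cref{THM infinite measurements without compactness} in the particular case $X=\R^n$,'' and your choice $X=W=\R^n$ carries out precisely that specialization, with the hypotheses matching up as you describe.
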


We now pass to the proof of the main stability estimate in the case of an infinite number of measurements.

%%%%%%%%%%%%%%%%%%%%%%%%%%%%%%%%%%%%
\subsection{Proof of \Cref{MAINTHM1}}
%%%%%%%%%%%%%%%%%%%%%%%%%%%%%%%%%%%%

In the following technical lemma we show that the parameter $\delta$ can be chosen small enough so that if $x$ and $y$ are two points such that $\|x-y\|_X<\delta$, then $x$ and $y$ belong to a single compact set contained in a chart $U_i$ for some $i\in I$. As a result, we will be able to consider a single chart in the atlas of $M$.

\begin{lemma}\label{one chart lemma}
Let $X$ be a Banach space, $M\subseteq X$ be an $n$-dimensional differentiable manifold with an atlas $\{(U_i,\varphi_i)\}_{i\in I}$ and $K\subseteq M$ be a compact set. There exist $\delta_{K,M}>0$ and a finite collection of compact sets $K_1,\ldots,K_m\subseteq M$ such that $K=K_1\cup\cdots\cup K_m$ and for every $j\in\{1,\dots,m\}$ there exists $i\in I$ such that
\[
\overline{B_X(K_j,\delta_{K,M})}\cap M\subseteq U_i.
\]
\end{lemma}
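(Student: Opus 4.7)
The plan is to exploit the compactness of $K$ together with the fact that each chart domain $U_i$ is open in $M$ with respect to the subspace topology inherited from $X$. First I would use that each $U_i$ can be written as $U_i = V_i \cap M$ for some open set $V_i \subseteq X$. For every $x \in K$, I pick an index $i(x) \in I$ with $x \in U_{i(x)}$, and because $V_{i(x)}$ is open in $X$, I choose $r_x > 0$ such that $\overline{B_X(x,2r_x)} \subseteq V_{i(x)}$.

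Next I would extract a finite subcover from the open cover $\{B_X(x,r_x/2)\}_{x\in K}$ of $K$, obtaining finitely many balls $B_X(x_k,r_k/2)$ with centers $x_k \in K$, radii $r_k = r_{x_k}$, and indices $i_k = i(x_k)$, for $k = 1,\dots,m$, satisfying $\overline{B_X(x_k,2r_k)} \subseteq V_{i_k}$. Then I set
\[
\delta_{K,M} \;:=\; \min_{1 \le k \le m} \tfrac{r_k}{2} \;>\; 0,
\qquad
K_j \;:=\; \overline{B_X(x_j, r_j/2)} \cap K.
\]
Each $K_j$ is a closed subset of the compact set $K$, hence compact, and the inclusion $K \subseteq \bigcup_{j=1}^m B_X(x_j,r_j/2)$ forces $K = K_1 \cup \dots \cup K_m$.

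The required containment is then a triangle-inequality check. If $y \in \overline{B_X(K_j,\delta_{K,M})} \cap M$, then $\dist_X(y, K_j) \le \delta_{K,M} \le r_j/2$, and by the compactness of $K_j$ there exists $z \in K_j$ with $\|y-z\|_X \le r_j/2$, while $\|z - x_j\|_X \le r_j/2$ by construction of $K_j$. Combining,
\[
\|y - x_j\|_X \;\le\; \|y-z\|_X + \|z-x_j\|_X \;\le\; r_j \;<\; 2r_j,
\]
so $y \in \overline{B_X(x_j,r_j)} \subseteq V_{i_j}$, and since $y \in M$ we conclude $y \in V_{i_j} \cap M = U_{i_j}$.

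I do not expect a substantial obstacle here, since the argument is purely point-set topological; the only subtlety is keeping a factor-of-four gap between the radii used for covering ($r_x/2$) and the radii used to fit inside the chart ($2r_x$), so that the fattening by $\delta_{K,M}$ still lies inside the open set $V_{i_j}$ pulled back from the ambient chart neighbourhood.
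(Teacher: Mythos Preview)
Your argument is correct, but it is organized differently from the paper's. The paper first extracts a finite subcover $U_1,\ldots,U_m$ of $K$ by chart domains, then defines the continuous function $d(x)=\max_{i}\dist(x,M\setminus U_i)$ on $K$, sets $\delta_{K,M}=\tfrac12\min_K d$, and takes
\[
K_i=\{x\in K\cap U_i:\dist(x,M\setminus U_i)\ge 2\delta_{K,M}\},
\]
so the $K_i$'s are indexed by the charts themselves and defined via a Lebesgue-number-type argument intrinsic to $M$. You instead lift each $U_i$ to an open $V_i\subseteq X$ with $U_i=V_i\cap M$, cover $K$ by small ambient balls $B_X(x_k,r_k/2)$ with $\overline{B_X(x_k,2r_k)}\subseteq V_{i_k}$, and take $K_j=\overline{B_X(x_j,r_j/2)}\cap K$; the containment then follows from a straight triangle inequality in $X$. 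Your route is slightly more elementary and handles the single-chart case uniformly (the paper treats it separately because $\dist(x,M\setminus U_i)$ degenerates when $M\setminus U_i=\emptyset$), whereas the paper's construction has the minor structural advantage that the pieces $K_i$ are in one-to-one correspondence with the charts of the finite subcover.
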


\begin{remark}\label{rem:modulus}
This result implies a stronger continuity property of the charts. Recall that for every $j\in\{1,\dots,m\}$ there exists $i_j\in I$ such that
\[
\overline{B_X(K_j,\delta_{K,M})}\cap M\subseteq U_{i_j}.
\]
Since the set $\overline{B_X(K_j,\delta_{K,M})}\cap K$ is compact and $\varphi_{i_j}$ is continuous, there exists a modulus of continuity $\omega_j$ for $\varphi_{i_j}$ restricted to $\overline{B_X(K_j,\delta_{K,M})}\cap K$. Setting
\[
\omega(t)=\max_{j=1,\dots,m} \omega_j (t)
\]
yields a unique modulus of continuity $\omega$ which is valid for all the charts, namely
\[
|\varphi_{i_j}(x)-\varphi_{i_j}(x)|\le \omega(\|x-y\|_X),\qquad  x,y\in \overline{B_X(K_j,\delta_{K,M})}\cap K,
\]
for every $j=1,\dots,m$.
\end{remark}

\begin{proof}
Let us start with the case in which the manifold is associated to an atlas with just one chart, say $\{(M,\varphi)\}$. Then the inclusion
\[
\overline{B_X(K,\delta)}\cap M\subseteq M
\]
holds for any $\delta >0$. Therefore, in what follows we assume that the atlas $\{(U_i,\varphi_i)\}$ has at least two charts.

Observe next that $M=\bigcup_{i\in I}U_i$ and $K\subseteq M$, so $\{U_i\}_{i\in I}$ is an open cover of $K$. Since $K$ is compact, we can extract a finite subcover of $K$ denoted by $\{U_i\}_{i=1,\ldots,m}$.

Next fix any $i=1,\ldots,m$ and observe that the function
\begin{equation*}
	x
	\longmapsto
	\dist(x,M\setminus U_i)
	:\,=
	\inf\set{\|x-w\|_X}{w\in M\setminus U_i}
\end{equation*}
is continuous in $M$. Indeed, for $x,y\in M$ and $S=M\setminus U_i$ closed in $M$ we have that $\dist(x,S)\le\|x-w\|_X\le\|y-w\|_X+\|x-y\|_X$ for every $w\in S$. Taking the infimum we get $\dist(x,S)\le\dist(y,S)+\|x-y\|_X$, where the roles of $x$ and $y$ are interchangeable, so $|\dist(x,S)-\dist(y,S)|\le\|x-y\|_X$. On the other hand, since $U_i$ is open with respect to the topology of $M$ inherited from $X$, the function $x\mapsto\dist(x,S)$ is positive in $U_i$ (more precisely,  every $x\in U_i$ has a neighbourhood  contained in $U_i$, namely there is a sufficiently small $\eps>0$ such that $B_X(x,\eps)\cap M\subseteq U_i$, so $\dist(x,S)\ge\eps>0$). Therefore, since $K\subseteq\bigcup_{i=1}^mU_i\subseteq M$, the function $d\colon K\to(0,\infty)$ given by
\begin{equation*}
	d(x)
	:\,=
	\max_{i=1,\ldots,m}\{\dist(x,M\setminus U_i)\}
\end{equation*}
is continuous and positive in $K$. Hence, since $K\subseteq M$ is compact, then $\delta_{K,M}:\,=\frac{\min_K d}{2}>0$.

Finally, let us define the closed sets
\begin{equation*}
	K_i
	:\,=
	\{x\in K\cap U_i\,:\,\dist(x,M\setminus U_i)\ge 2\delta_{K,M} \},
	\qquad i=1,\ldots,m.
\end{equation*}
Then $\overline{B_X(x,\delta_{K,M})}\cap M\subseteq U_i$ for each $x\in K_i$, and since $K_i\subseteq K$, the set $K_i$ is compact. Moreover, for each $x\in K$, since $d(x)\ge 2\delta_{K,M}$ we have that $x$ is contained in some $K_i$, so $K=K_1\cup\cdots\cup K_m$. 
\end{proof}

We are now ready to prove \Cref{MAINTHM1}. 

\begin{proof}[Proof of \Cref{MAINTHM1}]
Let $\delta_{K,M}$ be the constant and $\{K_j\}_{j=1}^m$ be the compact sets from \Cref{one chart lemma}. If $x,y\in K$ with $\|x-y\|_X\ge\delta_{K,M}$, we obtain the Lipschitz stability estimate by recalling \Cref{large distance lemma}. Hence, we focus on the case in which $x,y\in K$ with $\|x-y\|<\delta_{K,M}$. Thus, as an immediate consequence of \Cref{one chart lemma}, there exist $i\in I$ and $j\in\{1,\dots,m\}$ such that $x\in K_j$ and
\begin{equation*}
	y
	\in
	\overline{B_X(K_j,\delta_{K,M})}\cap M
	\subseteq
	U_i.
\end{equation*}
Let $\widetilde A=\varphi_i(U_i)\subseteq\R^n$ and consider $\widetilde F=F\circ\varphi_i^{-1}\colon\widetilde A\to Y$. Since $F\in C^1(M,Y)$, then $\widetilde F\in C^1(\widetilde A,Y)$. Moreover, by continuity, $\widetilde K_j=\varphi_i(\overline{B_X(K_j,\delta_{K,M})\cap M})$ is a compact set in $\widetilde A$. By assumption, since $\varphi_i$ is a homeomorphism, we have that
\begin{enumerate}
\item $\widetilde F$ is injective;
\item $dF_x=\widetilde F'(\widetilde x)\in\L(\R^n,Y)$ is injective for every $\widetilde x=\varphi_i(x)\in\widetilde A$.
\end{enumerate}
Then the hypotheses in \Cref{COR-1} are satisfied, and so there exists $C_{K_j}>0$ such that
\begin{equation}\label{lip-est F tilde}
				\frac{\|\widetilde{F}(\widetilde x)-\widetilde{F}(\widetilde y)\|_Y}{|\widetilde x-\widetilde y|}
				\ge
				C_{K_j},
\end{equation}
for every  $\widetilde x=\varphi_i(x)$ and $\widetilde y=\varphi_i(y)$ in $\tilde K_j$. 
Next, we have
\begin{equation*}
	\frac{\|F(x)-F(y)\|_Y^\alpha}{\|x-y\|_X}
	=
	\bigg(\frac{\|\widetilde{F}(\widetilde x)-\widetilde{F}(\widetilde y)\|_Y}{|\widetilde x-\widetilde y|}\bigg)^\alpha\cdot\frac{|\varphi_i(x)-\varphi_i(y)|^\alpha}{\|x-y\|_X}
	\ge C_{K_j}^\alpha\ell,
\end{equation*}
where in the inequality we have used \eqref{lip-est F tilde} together with the regularity assumptions of the manifold  \eqref{holder parametrization}.

Finally, choosing $C=(\ell \min\{C^\alpha_{K_1},\ldots,C^\alpha_{K_m}\})^{-1}$ we obtain \eqref{Lip-stability} for $\|x-y\|<\delta_{K,M}$, which concludes the proof.
\end{proof}

%%%%%%%%%%%%%%%%%%%%%%%%%%%%%%%%%%%%
\section{Proofs: finite-dimensional measurements} \label{sec:pfII}
%%%%%%%%%%%%%%%%%%%%%%%%%%%%%%%%%%%%
In this section we prove Theorems~\ref{FM-1}, \ref{FM-2} and \ref{QNF-stability}. We begin with a lemma that guarantees stability for sufficiently distant points, in the case of a finite number of measurements.

\begin{lemma}\label{large distance lemma QN}
Let $X$ and $Y$ be Banach spaces, $K\subseteq X$ be a compact set, $Q_N\colon Y\to Y$ be bounded linear maps satisfying \Cref{Q_N} and  $F\colon K\to Y$ be a continuous function such that $F(x)-F(y) \in \widetilde Y$ for  every $x,y \in K$ and satisfying the Lipschitz stability estimate
\begin{equation*}
	\|x-y\|_X\le C \|F(x)-F(y)\|_Y
	,\qquad x,y\in K,
\end{equation*}
for some $C>0$. Given $\delta>0$, then
\begin{equation}\label{Lipest>delta QN}
	\|x-y\|_X\le \frac{2C\diam K}{\delta}\|Q_NF(x)-Q_NF(y)\|_Y
\end{equation}
for every $x,y\in K$ such that $\|x-y\|_X\ge\delta$ and every sufficiently large $N\in\N$ such that
\begin{equation}\label{QNF-delta}
	\sup_{\xi,\eta\in K}\|F(\xi)-F(\eta)-Q_N(F(\xi)-F(\eta))\|_Y\le\frac{\delta}{2C},
\end{equation}
where the left hand side of this inequality goes to $0$ as $N\to+\infty$.
\end{lemma}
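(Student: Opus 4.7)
The plan is to combine the hypothesized Lipschitz stability of $F$ with a triangle inequality that inserts $Q_N$, controlling the resulting error uniformly in $\xi\in K$. The proof splits naturally into two steps: (a) show that $\sup_{\xi\in K}\|F(\xi)-Q_NF(\xi)\|_Y\to 0$, which in particular makes the hypothesis \eqref{QNF-delta} achievable for $N$ large; (b) deduce \eqref{Lipest>delta QN} by absorbing the error using the hypothesis $\|x-y\|_X\ge\delta$.

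For step (a), the key point is that pointwise convergence $Q_N\big|_{\widetilde Y}\to I_{\widetilde Y}$ together with the uniform bound $\|Q_N\big|_{\widetilde Y}\|\le D$ from \Cref{Q_N} upgrades to uniform convergence on any compact subset of $\widetilde Y$. Here $F(K)$ is a compact subset of $\widetilde Y$ (by continuity of $F$, compactness of $K$, and the assumption $\ran F\subseteq\widetilde Y$). A standard $\varepsilon/3$-argument on a finite $\varepsilon$-net $\{y_1,\dots,y_k\}$ of $F(K)$ then yields
\[
\sup_{\xi\in K}\|(I-Q_N)F(\xi)\|_Y\le D\,\varepsilon+\max_{j\le k}\|(I-Q_N)y_j\|_Y+\varepsilon\xrightarrow[N\to\infty]{}(D+1)\varepsilon,
\]
so the left hand side tends to $0$, and in particular \eqref{QNF-delta} holds for all sufficiently large $N$.

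For step (b), take $x,y\in K$ with $\|x-y\|_X\ge\delta$ and an $N$ satisfying \eqref{QNF-delta}. Write $F(x)-F(y)=Q_NF(x)-Q_NF(y)+(I-Q_N)(F(x)-F(y))$ and estimate the last term by $2\sup_{\xi\in K}\|(I-Q_N)F(\xi)\|_Y\le\delta/(2C)$. Plugging into the Lipschitz stability hypothesis for $F$ yields
\[
\|x-y\|_X\le C\|Q_NF(x)-Q_NF(y)\|_Y+\tfrac{\delta}{2}.
\]
Since $\delta\le\|x-y\|_X$, we have $\delta/2\le\|x-y\|_X/2$, and absorbing this term on the left gives $\|x-y\|_X\le 2C\|Q_NF(x)-Q_NF(y)\|_Y$. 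The claim then follows from $\diam K\ge\delta$ (otherwise no such pair $x,y$ exists and the estimate is vacuous), which gives $2C\le 2C\diam K/\delta$.

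No serious obstacle is expected; the only step requiring care is the uniform convergence on $F(K)$ in step (a), which is a routine consequence of the uniform operator bound in \Cref{Q_N}\eqref{ass:1} combined with strong convergence on $\widetilde Y$ in \eqref{ass:2}, together with total boundedness of $F(K)$.
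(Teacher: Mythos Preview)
Your proof is correct and follows the same overall strategy as the paper: insert $Q_N$ via the triangle inequality, control the error $\sup_{\xi\in K}\|(I-Q_N)F(\xi)\|_Y$ uniformly, and use $\|x-y\|_X\ge\delta$ to absorb it. Two minor differences are worth noting. First, for step~(a) the paper argues by sequential compactness (maximize $f_N(\xi)=\|(I-Q_N)F(\xi)\|_Y$ at some $\xi_N\in K$, extract a convergent subsequence, and use the equicontinuity bound $|f_N(\xi_1)-f_N(\xi_2)|\le (D+1)\|F(\xi_1)-F(\xi_2)\|_Y$), whereas you use an $\varepsilon$-net on the compact set $F(K)\subseteq\widetilde Y$; both are standard routes to ``pointwise $+$ equicontinuity $\Rightarrow$ uniform'' on compacta. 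Second, your step~(b) is slightly sharper: by absorbing $\delta/2\le\|x-y\|_X/2$ into the left-hand side you obtain $\|x-y\|_X\le 2C\|Q_NF(x)-Q_NF(y)\|_Y$ directly, with a constant independent of $\diam K/\delta$, and only afterwards weaken to match the stated bound; the paper instead bounds $\|x-y\|_X\le\diam K$ from the outset and therefore only reaches the constant $2C\diam K/\delta$.
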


\begin{proof}
We can assume that $0<\delta\le\diam K$ since otherwise the result is trivial. Then, if $x,y\in K$ satisfy $\|x-y\|_X\ge\delta$, by the Lipschitz stability of $F$ together with the triangle inequality we obtain
\begin{align*}
	\delta C^{-1}
	&\le
	\|F(x)-F(y)\|_Y\\
	&\le \|Q_NF(x)-Q_NF(y)\|_Y+ \sup_{\xi,\eta\in K}\|F(\xi)-F(\eta)-Q_N(F(\xi)-F(\eta))\|_Y.
\end{align*}
Hence, the result will follow for every $N\in\N$ such that
\eqref{QNF-delta}
holds.

It remains to show that
\begin{equation}\label{eq:to0}
	\lim_{N\to\infty}\,\sup_{\xi,\eta\in K}f_N(\xi,\eta)
	=
	0,
\end{equation}
where $f_N\colon K\times K\to[0,\infty)$ is the function defined by
\[
f_N(\xi,\eta)=\|F(\xi)-F(\eta)-Q_N(F(\xi)-F(\eta))\|_Y,\qquad \xi,\eta\in K.
\]
Since $F(\xi)-F(\eta) \in \widetilde Y$ for $\xi,\eta \in K$ by assumption, recalling \Cref{Q_N} we get that $f_N(\xi,\eta)\to 0$ as $N\to\infty$ for every $\xi,\eta\in K$. Let us check that $f_N$ is continuous in $K\times K$: for $\xi_1,\xi_2,\eta_1,\eta_2\in K$, we have
\begin{equation}\label{eq:fN}
\begin{split}
		|f_N(\xi_1,\eta_1)-f_N(\xi_2,\eta_2)|
		=
		~&
		\big|\|F(\xi_1)-F(\eta_1)-Q_N(F(\xi_1)-F(\eta_1))\|_Y \\
		&-\|F(\xi_2)-F(\eta_2)-Q_N(F(\xi_2)-F(\eta_2))\|_Y\big|
		\\
		\le
		~&
		\|F(\xi_1)-F(\xi_2)\|_Y+\|F(\eta_1)-F(\eta_2)\|_Y\\
		&+\|Q_N(F(\xi_1)-F(\xi_2))\|_Y+\|Q_N (F(\eta_1)-F(\eta_2))\|_Y
		\\
		\le
		~&
		(D+1)\left(\|F(\xi_1)-F(\xi_2)\|_Y+\|F(\eta_1)-F(\eta_2)\|_Y\right),
\end{split}
\end{equation}
where \Cref{Q_N} has been recalled in the second inequality, so the continuity of $f_N$ follows from the fact that $F$ is continuous.  Moreover, since $K\times K$ is compact, the maximum of $f_N$ is attained at some $(\xi_N,\eta_N)\in K\times K$, that is 
\[
\sup_{\xi,\eta\in K}f_N(\xi,\eta)=f_N(\xi_N,\eta_N).
\]
This produces a sequence of points $(\xi_N,\eta_N)_N$ in $K\times K$.
Let us consider a convergent subsequence $(\xi_{N_j},\eta_{N_j})\to (\widetilde \xi,\widetilde \eta)\in K\times K$, then
\begin{align*}
		\lim_{j\to\infty}f_{N_j}(\xi_{N_j},\eta_{N_j})
		&=
		\lim_{j\to\infty}|f_{N_j}(\xi_{N_j},\eta_{N_j})-f_{N_j}(\widetilde \xi,\widetilde \eta)|\\
		&\le(D+1)\lim_{j\to\infty}\left(\|F(\xi_{N_j})-F(\widetilde \xi)\|_Y+\|F(\eta_{N_j})-F(\widetilde \eta)\|_Y\right)\\
		&=0,
\end{align*}
where we used \eqref{eq:fN} in the last inequality. Since the same argument can be applied to any convergent subsequence $(\xi_{N_j},\eta_{N_j})_j$, by compactness this shows \eqref{eq:to0}, and the proof follows.
\end{proof}

%%%%%%%%%%%%%%%%%%%%%%%%%%%%%%%%%%%%
\subsection{Proof of \Cref{FM-1}}
%%%%%%%%%%%%%%%%%%%%%%%%%%%%%%%%%%%%

The proof of this result follows  the same argument of the proof of \cite[Theorem 2]{ALB-SAN}.

\begin{proof}[Proof of \Cref{FM-1}]
Let $\delta_K>0$ and $\widehat K$ be the constant and the compact neighbourhood of $K$  from \Cref{delta-K}. If $K$ is convex, simply set $\delta_K=\diam K$ and $\widehat K=K$. By \Cref{large distance lemma QN}, the Lipschitz stability of $Q_NF$ \eqref{LS-QNF} follows in the case $\|x-y\|_X\ge\delta_K$ with $c_K=\frac{\diam K}{\delta_K}\in [1,+\infty)$, so it only remains to show the estimate in the case $\|x-y\|_X<\delta_K$.

By the triangle inequality,
\begin{equation*}
		\|F(x)-F(y)\|_Y
		\le
		\|Q_NF(x)-Q_NF(y)\|_Y+\|(I_Y-Q_N)(F(x)-F(y))\|_Y,
\end{equation*}
and since $F$ is Lipschitz stable in $K$ by assumption,
\begin{equation*}
		\frac{\|Q_NF(x)-Q_NF(y)\|_Y}{\|x-y\|_X}
		\ge
		C^{-1}-\frac{\|(I_Y-Q_N)(F(x)-F(y))\|_Y}{\|x-y\|_X}.
\end{equation*}
In order to show the Lipschitz stability estimate for $Q_NF$, it is sufficient  to show that
\begin{equation}\label{QNF estimate}
		\frac{\|(I_Y-Q_N)(F(x)-F(y))\|_Y}{\|x-y\|_X}
		\le
		\frac{1}{2C},\qquad x,y\in K, \|x-y\|_X<\delta_K,
\end{equation}
holds for every large enough $N\in\N$. The strategy is to show that the left hand side in \eqref{QNF estimate} is uniformly bounded in $K$ by a constant depending on $N$ and vanishing when $N\to\infty$. Since $\|x-y\|_X<\delta_K$, \Cref{delta-K} yields that the closed line segment between $x$ and $y$ is contained in the compact set $\widehat K$. On the other hand, since $Q_N$ is linear, then $(Q_NF)'(x)=Q_NF'(x)$. Then, by the mean value theorem for Gateaux differentiable functions between Banach spaces, for every $x,y\in K$ with  $\|x-y\|_X<\delta_K$ there exists $\xi_0\in\widehat K$ such that
\begin{equation*}
\begin{split}
		\frac{\|(I_Y-Q_N)(F(x)-F(y))\|_Y}{\|x-y\|_X}
		\le
		~&
		\frac{\|(I_Y-Q_N)F'(\xi_0)(x-y)\|_Y}{\|x-y\|_X}
		\\
		\le
		~&
		\sup_{\xi\in\widehat K}\sup_{\zeta\in\S_W}\|(I_Y-Q_N)F'(\xi)\zeta\|_Y
		=\,:
		s_N,
\end{split}
\end{equation*}
where the second inequality comes from the fact that $x$ and $y$ belong to the vector space $W$.
Therefore, \eqref{QNF estimate} (and thus the desired Lipschitz stability estimate with constant $2C\le 2c_K C$) follows whenever $s_N\le\frac{1}{2C}$, which is ensured by the fact that $s_N\to0$ as $N\to\infty$ (see \cite[Theorem 2]{ALB-SAN}).
\end{proof}

As in the case with infinite-dimensional measurements (\Cref{COR-1}), for $X=\R^n$ we obtain the following corollary as an immediate consequence of \Cref{FM-1}.

\begin{corollary}\label{FM-COR}
Let $Y$ be a Banach space, $A\subseteq\R^n$ an open set and $K\subseteq A$ a compact subset. Consider a Fr\'echet differentiable map $F\in C^1(A,Y)$ such that
\begin{enumerate}
\item $F(x)-F(y) \in \widetilde Y$ for every  $x,y \in K$;
\item $\ran\{F'(x)\}\subseteq\widetilde Y$ for every $x\in A$;
\item the Lipschitz stability estimate
\begin{equation*}
				|x-y|\le C\|F(x)-F(y)\|_Y
				,\qquad x,y\in K,
\end{equation*}
is satisfied for some $C>0$. 
\end{enumerate}
Then $Q_NF$ satisfies the Lipschitz stability estimate
\begin{equation*}
				|x-y|\le  2c_K C\|Q_NF(x)-Q_NF(y)\|_Y
				,\qquad x,y\in K, 
\end{equation*}
for some $c_K>0$ depending only on $K$ and every  sufficiently large $N\in\N$. If $K$ is convex, we can choose $c_K=1$. In the general case, we have
\[
c_K=\frac{\diam K}{\delta_K},
\]
where $\delta_K$ is the constant given in Lemma~\ref{delta-K}.
\end{corollary}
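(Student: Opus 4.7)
The plan is to derive this corollary as a direct specialization of \Cref{FM-1}, taking $X = \R^n$ and $W = X$. With this choice the subspace $W$ coincides with the ambient space, so $W \cap A = A$, and every hypothesis of \Cref{FM-1} reduces to the corresponding hypothesis of the corollary: condition (1) is identical; condition (2), $\ran(F'(x)) \subseteq \widetilde Y$ for all $x \in A$, agrees with the condition $\ran(F'(x)|_W) \subseteq \widetilde Y$ of \Cref{FM-1} because $F'(x)|_W = F'(x)$ when $W = X$; and condition (3), the Lipschitz stability of $F$ on $K$, transfers verbatim. The maps $Q_N$ are already assumed to satisfy \Cref{Q_N} from the context of the section.

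Once the hypotheses are matched up, I would invoke \Cref{FM-1} directly to obtain the Lipschitz stability estimate $|x-y| \leq 2 c_K C \|Q_N F(x) - Q_N F(y)\|_Y$ for every $x,y \in K$ and all sufficiently large $N \in \N$, with $c_K = \diam K / \delta_K$ given by \Cref{delta-K} (and $c_K = 1$ when $K$ is convex). The dichotomy between the convex and non-convex cases is inherited from \Cref{FM-1} without any change.

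Since the argument is merely a translation of the general theorem to the finite-dimensional ambient space, I do not anticipate any serious obstacle: the statement is essentially a bookkeeping consequence of \Cref{FM-1}. The only minor verification worth recording is that in finite dimensions the compact neighbourhood $\widehat K$ and the constant $\delta_K$ constructed in \Cref{delta-K} still apply with $W = \R^n$ (which is trivially a finite-dimensional subspace of itself), so no part of that lemma needs to be reproved; similarly, the conditions on $N$ described in \Cref{rem:N} specialize without modification to this setting.
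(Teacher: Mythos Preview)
Your proposal is correct and mirrors the paper's own treatment: the paper simply remarks that the corollary is ``an immediate consequence of \Cref{FM-1}'' in the case $X=\R^n$, which is exactly the specialization $X=W=\R^n$ that you carry out. No additional argument is needed.
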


We are now able to prove the main stability results in the case of a finite number of measurements and nonlinear priors.

%%%%%%%%%%%%%%%%%%%%%%%%%%%%%%%%%%%%
\subsection{Proof of \Cref{FM-2}}
%%%%%%%%%%%%%%%%%%%%%%%%%%%%%%%%%%%%

Let $\delta_{K,M}>0$ be the constant and $\{K_j\}_{j=1}^m$ be the compact sets from \Cref{one chart lemma}. If $x,y\in K$ satisfy $\|x-y\|_X\ge\delta_{K,M}$, the Lipschitz stability follows by  \Cref{large distance lemma QN}. Therefore, for the rest of the proof we assume that $\|x-y\|_X<\delta_{K,M}$, so $x$ and $y$ are covered by the same chart $(U_i,\varphi_i)$. In addition, let $K_j\subseteq K\cap U_i$ be the compact set from \Cref{one chart lemma} such that $x,y\in K_j$.

Let $\widetilde A=\varphi_i(U_i)\subseteq\R^n$ and consider $\widetilde F=F\circ\varphi_i^{-1}\colon\widetilde A\subseteq\R^n\to Y$. Since $F\in C^1(M,Y)$, then $\widetilde F\in C^1(\widetilde A,Y)$. Moreover, by continuity, $\widetilde K_j=\varphi_i(\overline{B_X(K_j,\delta_{K,M})}\cap K)$ is a compact set in $\widetilde A$. Then $\ran(\widetilde F'(\widetilde x))=\ran(dF_x)\subseteq\widetilde Y$ for every $x\in U_i$, where $\widetilde x=\varphi_i(x)$, by assumption (2). Observe that assumption (3a) immediately implies (3b). Thus
\begin{equation}\label{F-tilde-estimate}
	\frac{\|\widetilde{F}(\widetilde x)-\widetilde{F}(\widetilde y)\|_Y}{|\widetilde x-\widetilde y|}
	\ge
	\frac{\ell}{C},
\end{equation}
for every $\widetilde x=\varphi_i(x)$ and $\widetilde y=\varphi_i(y)$ in $\tilde K_j$. By \Cref{FM-COR} there exists $N_j\in\N$ such that
\begin{equation*}
	\frac{\|Q_N\widetilde F(\widetilde x)-Q_N\widetilde F(\widetilde y)\|_Y}{|\widetilde x-\widetilde y|}
	\ge
	\frac{\ell}{2c_{\widetilde K_j} C},\qquad \tilde x,\tilde y\in \tilde K_j,
\end{equation*}
for every $N\ge N_j$. This together with the fact that $\varphi_i^{-1}$ is $\alpha$-H\"older continuous yields
\begin{equation*}
\begin{split}
	\frac{\|Q_NF(x)-Q_NF(y)\|_Y^\alpha}{\|x-y\|_X}
	=
	~&
	\bigg(\frac{\|Q_N\widetilde F(\widetilde x)-Q_N\widetilde F(\widetilde y)\|_Y}{|\widetilde x-\widetilde y|}\bigg)^\alpha\cdot\frac{|\varphi_i(x)-\varphi_i(y)|^\alpha}{\|x-y\|_X}
	\\
	\ge
	~&
	\Big(\frac{\ell}{2c_{\widetilde K_j} C}\Big)^\alpha\ell \\ 
	\ge
	~&
	\frac{\ell^{\alpha+1}}{(2C)^\alpha}\,\max\{c_{\widetilde K_1},\ldots,c_{\widetilde K_m}\}^{-\alpha}
	,
\end{split}
\end{equation*}
for every $N\ge\max(N_1,\dots,N_m)$, and the proof is finished. \qed

\subsection{Proof of \Cref{QNF-stability}}
The proof of the Lipschitz stability estimate for $Q_NF$ \eqref{QNF-stability-estimate} follows by combining \Cref{MAINTHM1} and \Cref{FM-2} with $\alpha=1$ and under assumption (3b), which was already obtained in the proof of \Cref{MAINTHM1} (estimate \eqref{lip-est F tilde}). \qed

\subsection{Proof of \Cref{QNF-stability-multiple}}
By applying \Cref{QNF-stability} to $F|_{M_p}$ for $p=1,\dots,P$ we obtain that there exist $C_p>0$ and $N_p\in\N$ such that for every $N\ge N_p$ we have
\begin{equation}\label{QNF-stability-estimate-p}
			\|x-y\|_X \le C_p	\|Q_NF(x)-Q_NF(y)\|_Y
								,\qquad x,y\in K_p.
\end{equation}

Since the sets $K_p$ are compact and pairwise disjoint, their pairwise distance is positive, namely $d(K_p,K_q)>0$ for $p\neq q$. Thus there exists $\delta>0$ such that
\begin{equation}\label{eq:distance}
\delta\le d(K_p,K_q),\qquad p,q=1,\dots,P,\;p\neq q.
\end{equation}
Let us apply \Cref{large distance lemma} to $F|_K$: there exists $C'>0$ such that
\begin{equation*}
	\|x-y\|_X \le C'\|F(x)-F(y)\|_Y,\qquad x,y\in K,\;\|x-y\|_X\ge\delta.
\end{equation*}
As a consequence, by \eqref{QNF-stability-estimate-p} and \eqref{eq:distance} we obtain
\begin{equation*}
	\|x-y\|_X \le C''\|F(x)-F(y)\|_Y,\qquad x,y\in K,
\end{equation*}
where $C''=\max(C',DC_1,\dots,DC_P)$.
Thus, by  \Cref{large distance lemma QN} applied to $F|_K$, there exists $\tilde N\in \N$ such that for every $N\ge \tilde N$ we have
\begin{equation}\label{eq:almostthere}
	\|x-y\|_X\le \tilde C\|Q_NF(x)-Q_NF(y)\|_Y,\qquad x,y\in K,\;\|x-y\|_X\ge\delta,
\end{equation}
where $\tilde C=\frac{2C''\diam K}{\delta}$.

Finally, combining  \eqref{QNF-stability-estimate-p}, \eqref{eq:distance} and \eqref{eq:almostthere} we obtain \eqref{QNF-stability-estimate2}, namely 
\begin{equation*}%\label{QNF-stability-estimate}
			\|x-y\|_X \le C	\|Q_NF(x)-Q_NF(y)\|_Y
								,\qquad x,y\in K,
\end{equation*}
for $N\ge \max(\tilde N, N_1,\dots,N_P)$, where
$
C=\max(\tilde C,C_1,\dots,C_P).
$
\qed

\section{The Calder\'on problem with a triangular inclusion}\label{sec:cald}

We first introduce the manifold we consider in this example. Next, we will apply our main results to prove Theorem~\ref{thm:cald}.

\subsection{The manifold: indicator functions on simplexes}\label{sub:simplex}

For the sake of generality, we proceed in arbitrary dimension $d\ge 2$, even if the two-dimensional case would be sufficient for the current purposes.

\subsubsection*{The $d$-simplexes  in $\R^d$}
Let $\triangle^d$ denote the family of $d$-simplexes in $\R^d$. That is
\begin{equation*}
	\triangle^d
	:\,=
	\{\conv\{v_0,v_1,\ldots,v_d\}\,:\,v_0,v_1,\ldots,v_d\in\R^d,\ \det(v_1-v_0,\ldots,v_d-v_0)\neq 0\},
\end{equation*}
where $\conv S$ stands for the convex hull of a set $S$, i.e.\ the smallest set containing all convex combinations of elements in $S$. The condition $\det(v_1-v_0,\ldots,v_d-v_0)\neq 0$ in the definition of $\triangle^d$ ensures that the simplexes are not degenerate.
In addition, we introduce a constant $\mu>0$ controlling the size of the simplexes in the following way:  we  assume that
\begin{equation}\label{mu-control}
	|v_i-v_j|<\mu
	\qquad\text{ for } i,j=0,1,\ldots,d.
\end{equation}

It turns out that each triangle in $\triangle^d$ can be identified with a $d\times(d+1)$ real matrix containing the coordinates of its vertices as column vectors,
\begin{align*}
	&T\in\triangle^d\longrightarrow\; v^T:=(v_0,v_1,\ldots,v_d)\in\R^{d\times(d+1)},\quad T=\conv\{v_0,v_1,\ldots,v_d\},\\
	&v=(v_0,v_1,\ldots,v_d)\in\R^{d\times(d+1)} \longrightarrow\; T_v=\conv\{v_0,v_1,\ldots,v_d\}\in\triangle^d.
\end{align*}
However, the matrix $v^T$ is not unique, since any permutation of its columns would represent the same triangle.
To avoid this inconvenient, we assume without loss of generality that the vertices of $v^T$ are labeled according to an order in $\R^d$ (such as, for example, the so-called lexicographical order). 
We define a norm $\|\cdot\|_{\triangle^d}$ in $\R^{d\times(d+1)}$ by
\begin{equation*}
	\|(v_0,v_1,\ldots,v_d)\|_{\triangle^d}
	=
	\max\{|v_0|,|v_1|,\ldots,|v_d|\},
\end{equation*}
where $|\cdot|$ stands for the usual Euclidean norm of a vector in $\R^d$. This norm is equivalent to the usual Euclidean norm when the elements in $\triangle^d$ are treated as vectors in $\R^{d(d+1)}$.

\subsubsection*{The manifold and the atlas}
For each triangle $T\in\triangle^d$, we let $\chi_T$ be the indicator function on $T$, which is a function in $L^1=L^1(\R^d)$. We define
\begin{equation*}
	\widetilde M
	:\,=
	\{\chi_{T_v}\,:\,v=(v_0,v_1,\ldots,v_d)\in \R^{d\times(d+1)} \text{ satisfies \eqref{mu-control}}\}.
\end{equation*}
We shall show that $\widetilde M\subseteq L^1$ is a $d(d+1)$-dimensional manifold. To see this, we first need to construct an atlas for $M$. Let $T\in\triangle^d$ be any fixed triangle. Let $R_T=\frac{1}{4}\min_{i\neq j}|v^{T}_i-v^{T}_j|$ and define the following set of functions in $\widetilde M$: 
\begin{equation*}
	U_T
	=
	\left\{\chi_{T_v}\in \widetilde M\,:\,\|v^T-v\|_{\triangle^d}<R_T\right\}.
\end{equation*}
In addition, let $\varphi_T\colon U_T\to\R^{d\times(d+1)}\approx\R^{d(d+1)}$ be the function
\begin{equation*}
	\varphi_T(\chi_{T_v})
	:\,=
	v,	
	\qquad
	\text{such that}
	\qquad \|v^T-v\|_{\triangle^d}<R_T.
\end{equation*}
Note that the matrix $v$ is determined in a unique way. Indeed, since each vertex of $T_v$ is at a distance of at most $R_T$ from one of the vertices of $T$, and the balls of radius $R_T$ centred at the vertices of $T$ do not intersect, a permutation of the column vectors in $v$ would result in a new matrix $v'$ such that
\begin{equation}\label{eq:permutation_RT}
\| v^T-v'\|_{\triangle^d}\ge 3R_T\ge R_T.
\end{equation}

It is worth remarking that, even though the function $\varphi_T$ was constructed assuming that the vertices in $v^T$ had been labeled according to a pre-established order in $\R^d$, the matrices $v$ satisfying $\|v^T-v\|_{\triangle^d}<R_T$ might not have its vertices ordered in the same way. However, this is not a problem, since $\varphi_T^{-1}(v)=\chi_{ T_v}$ does not depend on the order in which the vertices of $v$ are considered.

By construction, $\varphi_T$ is a bijective function between $U_T$ and
$\varphi_T(U_T)=\{v\in\R^{d\times(d+1)}\,:\,v\text{ satisfies \eqref{mu-control} and }\|v^T-v\|_{\triangle^d}<R_T\}$. Note that $\varphi_T(U_T)$ is open.

\subsubsection*{The maps $\varphi_T^{-1}\colon\varphi_T(U_T)\to \widetilde M$ are Lipschitz}

Let $v,v'\in \varphi_T(U_T)$.
Since
\begin{equation*}
	\|\varphi_T^{-1}(v')-\varphi_T^{-1}(v)\|_{L^1}
	=
	\|\chi_{T_{v'}}-\chi_{T_v}\|_{L^1}
	=
	|T_{v'}\triangle T_v|,
\end{equation*}
 our aim is to show that the inequality
\begin{equation}\label{Lipschitz-triangles}
	|T_{v'}\triangle T_v|
	\le
	C\|v'-v\|_{\triangle^d},\qquad v,v'\in \varphi_T(U_T),
\end{equation}
holds for some constant $C>0$. For simplicity, let us write $\delta=\|v'-v\|_{\triangle^d}$ and observe that the symmetric difference between $T_{v'}$ and $T_v$ is contained in the $\delta$-neighbourhood of $\partial T_v$, that is, $T_{v'}\triangle T_v\subseteq\{\xi\in\R^d\,:\,\dist(\xi,\partial T_v)\le\delta\}$. In turn, by \eqref{mu-control}, this set is included in the union of $d+1$ rectangular prisms of measure $(\mu+2\delta)^{d-1}2\delta$ (see \Cref{fig:triangle}).
Hence, we can estimate
\begin{equation*}
	|T_{v'}\triangle T_v|
	\le
	(d+1)(\mu+2\delta)^{d-1}2\delta,
\end{equation*}
and since $\delta\le \|v'-v^T\|_{\triangle^d}+\|v^T-v\|_{\triangle^d}<2R_T=\frac12\min_{i\neq j}|v_i-v_j|\le\mu$, then
\begin{equation*}
	|T_{v'}\triangle T_v|
	\le
	3^d(d+1)\mu^{d-1}\delta,
\end{equation*}
so \eqref{Lipschitz-triangles} follows with $C=3^d(d+1)\mu^{d-1}$.

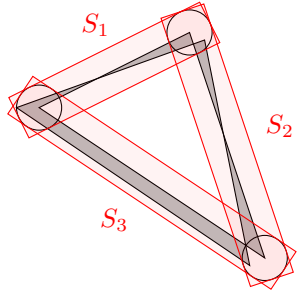
\begin{figure}
\begin{tikzpicture}[scale=1]
\fill[even odd rule,gray!50!white] (0,0)--(2,1)--(3,-2)--cycle (-0.3,0)--(2.2,0.9)--(2.8,-2.1);
\draw [black](0,0)--(2,1)--(3,-2)--cycle;
\draw [black](-0.3,0)--(2.2,0.9)--(2.8,-2.1)--cycle;
\draw [black](0,0) circle (0.3);
\draw [black](2,1) circle (0.3);
\draw [black](3,-2) circle (0.3);
\filldraw [red,opacity=0.05,rotate=26.5650512](-0.3,-0.3)--(-0.3,0.3)--(2.5360679775,0.3)--(2.5360679775,-0.3)--cycle;
\draw [red,rotate=26.5650512](-0.3,-0.3)--(-0.3,0.3)--(2.5360679775,0.3)--(2.5360679775,-0.3)--cycle;
\filldraw [red,opacity=0.05,rotate=-33.6900675](-0.3,-0.3)--(-0.3,0.3)--(3.90555127546,0.3)--(3.90555127546,-0.3)--cycle;
\draw [red,rotate=-33.6900675](-0.3,-0.3)--(-0.3,0.3)--(3.90555127546,0.3)--(3.90555127546,-0.3)--cycle;
\filldraw [red,opacity=0.05,rotate around={-71.5650512:(2,1)}](1.7,0.7)--(1.7,1.3)--(5.46227766017,1.3)--(5.46227766017,0.7)--cycle;
\draw [red,rotate around={-71.5650512:(2,1)}](1.7,0.7)--(1.7,1.3)--(5.46227766017,1.3)--(5.46227766017,0.7)--cycle;
\draw [red] (0.75,1.1) node {$S_1$};
\draw [red] (3.2,-0.25) node {$S_2$};
\draw [red] (1,-1.5) node {$S_3$};
\end{tikzpicture}
\caption{For $d=2$ and $v',v$ such that$\|v'-v\|_{\triangle^2}=\delta<R_T$, we have that $T_{v'}\triangle T_v\subseteq S_1\cup S_2\cup S_3$, where the $S_i$'s are rectangles of area bounded by $(\mu+2\delta)2\delta$.}
\label{fig:triangle}
\end{figure}

\subsubsection*{The maps $\varphi_T\colon U_T\to\R^{d\times(d+1)}$ are continuous}

For each $\chi_{T_0}\in U_T$ we take any sequence $\{\chi_{T_k}\}_k$ in $U_T$ converging to $\chi_{T_0}$. Then
\begin{equation*}
	|T_k\triangle T_0|
	=
	\|\chi_{T_k}-\chi_{T_0}\|_{L^1}
	\xrightarrow[k\to\infty]{}0.
\end{equation*}
This means that $T_k$ converges as a set to $T_0$,  and thus each vertex of $T_k$ converges to the corresponding vertex of $T_0$, so we have that
\begin{equation*}
	\|v^k-v^0\|_{\triangle^d}\xrightarrow[k\to\infty]{}0,
\end{equation*}
for some matrices $v^k$ and $v^0$ such that $T_{v^k}=T_k$ and $T_{v^0}=T_0$.
Then the continuity of $\varphi_T$ follows by construction.

\subsubsection*{The sets $U_T$ are open in $\widetilde M$}
Let $\chi_{T_0}\in U_T$. We show that $\chi_{T_0}$ is an interior point of $U_T$. Assume by contradiction that for every $k\ge 1$ there exists $\chi_{T_k}\in B_{L^1}(\chi_{T_0},\frac{1}{k})\cap M$ such that $\chi_{T_k}\notin U_T$. Then $\|v^k-v^T\|_{\triangle^d}\ge R_T$ for some $v^k\in\R^{d\times(d+1)}$ such that $T_{v^k}=T_k$. By the triangle inequality we then get that
\begin{equation*}
	0
	<
	R_T-\|v^0-v^T\|_{\triangle^d}
	\le
	\|v^k-v^0\|_{\triangle^d},
	\qquad k\ge 1,
\end{equation*}
for some matrix $v^0$ such that $T_{v_0}=T_0$. Arguing as above, since $\|\chi_{T_k}-\chi_{T_0}\|_{L^1}\to 0$, we obtain that $\|v^k-v^0\|_{\triangle^d}\to 0$, a contradiction.

\subsubsection*{The transition maps are continuously differentiable}

By construction, the transition maps are simply permutations of the vertices of the triangles, and are therefore smooth.

\subsubsection*{$\widetilde M$ is a differentiable manifold Lipschitz in $L^1$}

We have shown that $\{(U_T,\varphi_T)\,:\,T\in\triangle^d\}$ is an atlas for $\widetilde M$, which is a $d(d+1)$-dimensional differentiable manifold Lipschitz in $L^1$.

\subsection{Lipschitz stability with finite measurements}

We refer to Section~\ref{sub:cald} for the basic notation of Calder\'on's problem. Let us remind here the considered setup.
\begin{itemize}
\item $\Omega \subset \R^2$ is a bounded Lipschitz domain.
\item $X = L^1(\Omega)$.
\item $Y = \L(H^{\frac 1 2}(\partial \Omega), H^{-\frac 1 2}(\partial \Omega))$.
\item $M = \{ \sigma_T = 1 + (k-1)\chi_{T} : T \in \triangle^2, T \subseteq \Omega, \dist(T,\partial \Omega) > d_0/2\}$ for some fixed $d_0 > 0$ and $k >0$, $k \neq 1$.
\item $K = \{\sigma_T \in M : \dist(T,\partial \Omega) \ge d_0, |T| \ge d_1 \}$, for some $d_1 >0$.
\item $P^1_N\colon H^{\frac 1 2}(\partial \Omega) \to H^{\frac 1 2}(\partial \Omega)$ and $P^2_N\colon H^{-\frac 1 2}(\partial \Omega) \to H^{-\frac 1 2}(\partial \Omega)$ are bounded linear maps for $N \in \N$, such that $(P^j_N)^* = P^j_N$, $j=1,2,$ and $P^1_N \to I_{H^{\frac 1 2}(\partial \Omega)}$ and $P^2_N \to I_{H^{-\frac 1 2}(\partial \Omega)}$ strongly as $N \to +\infty$. In particular, since $H^s(\partial \Omega)$ is a Hilbert space for $s \in \R$, the $P^j_N$ can be chosen as orthogonal projections onto the space spanned by the first N elements of any orthonormal bases.
\item $Q_N y = P^2_N y P^1_N$ for $y \in Y$, as in Example \ref{ex:gal}.
\end{itemize}

We recall the statement of our main result for Calder\'on's problem, Theorem~\ref{thm:cald}, the reader's convenience.

\begin{theorem7}
Under the above assumptions, there exists $C>0$ depending only on $\Omega$, $d_0$, $d_1$ and $k$ such that 
\begin{equation}\label{est:Cald}
\|\sigma_1-\sigma_2\|_{L^1(\Omega)} \le C \|Q_N(\Lambda_{\sigma_1})-Q_N(\Lambda_{\sigma_2})\|_{H^{\frac 1 2}(\partial \Omega) \to H^{-\frac 1 2}(\partial \Omega)},\qquad \sigma_1, \sigma_2 \in K,
\end{equation}
for every sufficiently large $N \in \N$.
\end{theorem7}

We recall a result that will be used several times in the proof. First we define the set of triangles associated to our conductivities:
\[
\A = \{  T : T \in \triangle^2, T \subseteq \Omega, \dist(T,\partial \Omega) \ge d_0, |T| \ge d_1\}.
\]
By an abuse of notation, we also define $\partial \A = \{\partial T : T \in \A\}$.
We also recall that given two bounded non empty subsets $A$ and $B$ of $\overline \Omega$, the Hausdorff distance $d_H(A,B)$ is defined as:
\[
d_H(A,B) = \max \left\{\sup _{x \in A} \inf _{y \in B} \operatorname{dist}(x, y), \sup _{y \in B} \inf _{x \in A} \operatorname{dist}(y, x)\right\}.
\]

\begin{proposition} \label{prop:33}
There exists $C>0$ depending only on $\Omega$, $d_0$, $d_1$ and $k$ such that, if $T_1,T_2 \in \A$,
then the vertices of $T_1$ and $T_2$ can be ordered in such a way that
\[
\|v_1-v_2\|_{\triangle^2} \le C d_H(\partial T_1,\partial T_2),
\]
where $v_1,v_2$ are the matrices of the vertices associated to $T_1, T_2$.
\end{proposition}
\begin{proof}
By \cite[Proposition 3.3]{beretta2019}, there exist $\delta_0 >0$ and $C_0>0$ depending only on $\Omega$, $d_0$, $d_1$ and $k$ such that, if $T_1,T_2 \in \A$  satisfy
\[
d_H(\partial T_1,\partial T_2) \le \delta_0,
\]
then the vertices of $T_1$ and $T_2$ can be ordered in such a way that
\[
\|v_1-v_2\|_{\triangle^2} \le C_0 d_H(\partial T_1,\partial T_2),
\]
where $v_1,v_2$ are the matrices of the vertices associated to $T_1, T_2$.

If $d_H(\partial T_1,\partial T_2) \le \delta_0$, the estimate is proven. Otherwise, suppose $d_H(\partial T_1,\partial T_2) > \delta_0$. Then
\[
\|v_1-v_2\|_{\triangle^2} \le \|v_1\|_{\triangle^2}+ \|v_2\|_{\triangle^2}\le C(\Omega) \frac{\delta_0}{\delta_0}\le \frac{C(\Omega)}{\delta_0}d_H(\partial T_1,\partial T_2).
\]
It is sufficient to take $C=\max(C_0,\frac{C(\Omega)}{\delta_0})$.
\end{proof}

We are now ready to prove \Cref{thm:cald}.
\begin{proof}[Proof of \Cref{thm:cald}]
We need to verify that the assumptions of Theorem~\ref{FM-2} are satisfied.

The results in Section~\ref{sub:simplex} can be easily adapted to show that 
\[
M = \{ \sigma_T = 1 + (k-1)\chi_{T} : T \in \triangle^2, T \subseteq \Omega, \dist(T,\partial \Omega) > d_0/2\}
\]
is a six-dimensional differentiable manifold Lipschitz in $L^1(\Omega)$. The main difference is that we define the atlas as follows: for $T \in \triangle^2$ such that  $T \subseteq \Omega$ and  $\dist(T,\partial \Omega) > d_0/2$ let 
\begin{equation*}
U_T = \left\{\sigma_{T_v}\in M\,:\,\|v^T-v\|_{\triangle^2}<R_T\right\},
\end{equation*}
where $R_T=\frac{1}{4}\min_{i\neq j}|v^{T}_i-v^{T}_j|$. The charts $\varphi_T\colon U_T\to\R^{2\times 3}\approx\R^{6}$ are defined by
\begin{equation}\label{def:atlcal}
	\varphi_T(\sigma_{T_v})
	=
	v,	
	\qquad
	\text{such that}
	\qquad \|v^T-v\|_{\triangle^2}<R_T.
\end{equation}

We now prove that $K= \{\sigma_T \in M : \dist(T,\partial \Omega) \ge d_0, |T| \ge d_1 \}$ is compact as a subset of $L^1(\Omega)$. First we show that $\partial \A$ is compact in the topology induced by the Hausdorff distance. It is known that the set of non-empty, closed and bounded subsets of $\overline \Omega$ is a compact set in the Hausdorff distance topology, since $\overline \Omega$ is compact \cite{henrikson1999completeness}. The set $\partial\A$ is closed in the Hausdorff distance topology because, given a converging sequence, thanks to Proposition~\ref{prop:33} we can order the vertices in such a way that they converge to the vertices of a triangle in $\partial\A$, which is therefore the limit of the sequence. The closedness of $\partial\A$ immediately yields its compactness. Now, given a sequence $(\sigma_{T_j})_j \subseteq K$, we consider the associated sequences $\{\partial T_j\}_j\subseteq \partial\A$ of triangles. Then we can extract a converging subsequence $\{\partial T_{j_m}\}_m$ converging to $\partial T^*\in \partial\A$ according to the Hausdorff distance. Hence, for $m$ sufficiently large, we can again apply Proposition~\ref{prop:33} and order the vertices of $T_{j_m}$ such that they converge to the vertices of $T^*$ in the $\triangle^2$ norm. Finally, thanks to inequality \eqref{Lipschitz-triangles} we have that $\sigma_{T_{j_m}} \to \sigma_{T^*}$ in $L^1$ as $m \to +\infty$.

The regularity properties of the map $\Lambda_\sigma$ over the manifold $M$ have been studied in \cite[Corollary 4.5]{beretta2017differentiability} and \cite{beretta2019}. In particular, \cite[Lemma 3.5]{beretta2019} (see also \cite[Lemma 4.4]{beretta2017differentiability}) shows the continuity of the partial derivative of the Dirichlet-to-Neumann map with respect to the position of the vertices of a polygon. From standard results of analysis \cite[Theorem 1.9]{ambrosetti1995}, this yields that the DN map (composed with the charts) is in fact differentiable with continuous derivative, which in our setting is equivalent to $\sigma \mapsto \Lambda_\sigma \in C^1(M,Y)$.

Let $\tilde Y = \{ y \in Y: y \text{ is a compact operator}\}$. It is well known \cite{mandache2001} that $\Lambda_{\sigma_2}-\Lambda_{\sigma_1} \in \tilde Y$ for every $\sigma_1,\sigma_2 \in M$, because $\sigma_2 (x) = \sigma_1 (x) = 1$ for $x \in \Omega$ with $\dist(x, \partial \Omega) < d_0/2$, by assumption. Moreover $\tilde Y$ is closed, therefore $\ran (d\Lambda_\sigma) \subseteq \tilde Y$ for every $\sigma \in M$, as shown in Remarks~\ref{rem:frechet} and \ref{ref:deriv_compact}.

Now, from \cite[Theorem 2.1]{beretta2019} we have that for every $T_1, T_2 \in \A$ 
\begin{equation}\label{est:estH}
d_H(\partial T_1, \partial T_2) \le c \|\Lambda_{\sigma_1}-\Lambda_{\sigma_2}\|_{H^{\frac 1 2}(\partial \Omega) \to H^{-\frac 1 2}(\partial \Omega)},
\end{equation}
where $\sigma_j = 1 + (k-1)\chi_{T_j}$, $j=1,2$, with the constant $c$ depending only on $\Omega$, $d_0$, $d_1$ and $k$. For $\sigma_{T_1}, \sigma_{T_2} \in U_T$, for some $T \in \A$, we can combine Proposition~\ref{prop:33} with estimates \eqref{est:estH} and, after ordering the vertices of $T_1$ and $T_2$ accordingly, we get:
\begin{equation}\label{eq:almost_stability}
 \|v_1-v_2\|_{\triangle^2} \le c \|\Lambda_{\sigma_1}-\Lambda_{\sigma_2}\|_{H^{\frac 1 2}(\partial \Omega) \to H^{-\frac 1 2}(\partial \Omega)},
\end{equation}
for some constant $c>0$ depending only on $M$ and $K$.

We now claim that
\begin{equation}\label{eq:claim_order}
    \|\varphi_T(\sigma_1)-\varphi_T(\sigma_2) \|_{\triangle^2}\le \|v_1-v_2\|_{\triangle^2}.
\end{equation}
For each $i=1,2$, $\varphi_T(\sigma_i)$ and $v_i$ differ up to a permutation $\gamma_i$. By an abuse of notation, we write $v_i = \gamma_i(\varphi_T(\sigma_i))$. Since $\|v\|_{\triangle^2}=\|\gamma(v)\|_{\triangle^2}$ for all $v$ and $\gamma$, we have
\[
\|v_1-v_2\|_{\triangle^2} 
= \|\gamma_1(\varphi_T(\sigma_1))-\gamma_2(\varphi_T(\sigma_2))\|_{\triangle^2} = \|\gamma_2^{-1} (\gamma_1(\varphi_T(\sigma_1)))-\varphi_T(\sigma_2)\|_{\triangle^2}.
\]
If $\gamma_1=\gamma_2$, the claim follows. Otherwise, $\gamma_2^{-1} \circ\gamma_1$ is a nontrivial permutation, so that by \eqref{eq:permutation_RT} and \eqref{def:atlcal} we have
\[
\begin{split}
\|v_1-v_2\|_{\triangle^2} 
&= \|\gamma_2^{-1} (\gamma_1(\varphi_T(\sigma_1)))-v^T+v^T-\varphi_T(\sigma_2)\|_{\triangle^2}\\
&\ge \|\gamma_2^{-1} (\gamma_1(\varphi_T(\sigma_1)))-v^T\|_{\triangle^2}-\|v^T-\varphi_T(\sigma_2)\|_{\triangle^2}\\
&\ge 3R_T-R_T\\
&=2R_T\\
&\ge \|\varphi_T(\sigma_1)-v^T\|_{\triangle^2}+\|v^T-\varphi_T(\sigma_2)\|_{\triangle^2}\\
&\ge \|\varphi_T(\sigma_1)-\varphi_T(\sigma_2) \|_{\triangle^2},
\end{split}
\]
and \eqref{eq:claim_order} follows.

Combining \eqref{eq:almost_stability} and \eqref{eq:claim_order} yields condition (\ref{3b}b) of Theorem~\ref{FM-2}.
We can now apply Theorem~\ref{FM-2} in order to obtain the desired estimate \eqref{est:Cald}.
\end{proof}

\section{The Gel'fand-Calder\'on problem with spherical inclusions}\label{sec:gelf}

This section is devoted to the proof of \Cref{thm:gelf}.

\subsection{The manifold}

First, we introduce the manifold of potentials we use. Even if in this paper we will eventually deal only with the three-dimensional case, we provide the description in arbitrary dimension because it may turn out useful in other settings.

\subsubsection{The space of parameters}

Let $d \ge 2$, $A>0$ and $0<\varrho_0<\varrho_1<+\infty$ and define the following parameter space
\begin{equation*}
	\P_1
	=
	\{v^\top=(a^\top,r,\lambda)\in\R^{d+2}\,:\,a\in\R^d,\; |a|<A,\; \lambda,r\in\R,\;|\lambda|,r\in (\varrho_0,\varrho_1)\}.
\end{equation*}
Let $\Omega\subseteq\R^d$ be an open set such that $B(0,A+\varrho_1)\subseteq\Omega$, so that $B(a,r)\subseteq\Omega$ for every $(a^\top,r,\lambda)\in\P_1$.

For simplicity, in this section we denote by $C>0$ any constant depending only on $d$, $A$, $\varrho_0$ and $\varrho_1$, while additional dependences are denoted as subindexes.

Let $N\in\N$. We regard each element $V\in \P_1^N$ as a $(d+2)\times N$ matrix whose columns are vectors in $\P_1$. That is
\begin{equation*}
	V
	=
	(v_1,\ldots,v_N)
	=
	\begin{pmatrix}
	a_1 & a_2 & \cdots & a_N \\ r_1 & r_2 & \cdots & r_N \\ \lambda_1 & \lambda_2 & \cdots & \lambda_N
	\end{pmatrix}
	\in
	\P_1^N.
\end{equation*}
Given $V=(v_1,\ldots,v_N)\in\P_1^N$ we introduce the notation
\begin{equation*}
	\mathrm{vec}(V)^\top
	:\,=
	(v_1^\top,\ldots,v_N^\top)
	\in
	\R^{N(d+2)}.
\end{equation*}
In addition, we restrict the class of matrices in $\P_1^N$ to the matrices $V=(v_1,\ldots,v_N)\in \P_1^N$ such that
\begin{equation}\label{disjoint}
	r_k+r_\ell
	<
	|a_k-a_\ell|
	\ \text{ for every } \ k\neq\ell.
\end{equation}
We denote the set of matrices in $\P_1^N$ satisfying such condition by $\P_N$ . Constructed in this way, every matrix $V\in\P_N$ determines a  collection of pairwise disjoint balls $\{B(a_k,r_k)\,:\,k=1,\ldots,N\}$ in $\R^d$. Conversely, given a finite family of disjoint balls $B(a,r)$ with $|a|<A$ and $r\in(\varrho_0,\varrho_1)$, each of them assigned with an intensity $\lambda\in\R$ with $|\lambda|\in(\varrho_0,\varrho_1)$, there exists a unique matrix  $V\in\P_N$ (up to a permutation of its columns) representing the given set of balls and intensities.

In what follows we consider the matrix norm $\Norm{\cdot}$ in $\R^{N\times(d+2)}$ defined as
\begin{equation*}
	\Norm{V}
	=
	\Norm{(v_1,v_2,\ldots,v_N)}
	=
	\max\{|v_1|,|v_2|,\ldots,|v_N|\},
\end{equation*}
where $|\cdot|$ stands for the usual Euclidean norm of a vector in $\R^{d+2}$. Since $\R^{N\times(d+2)}$ is finite-dimensional, the norm $\Norm{\cdot}$ is equivalent to the usual Euclidean norm when the elements in $\R^{N\times(d+2)}$ are regarded as vectors in $\R^{N(d+2)}$. More precisely,
\begin{equation*}
	\frac{1}{\sqrt{N}}|\mathrm{vec}(V)|
	\le
	\Norm{V}
	\le
	|\mathrm{vec}(V)|.
\end{equation*}
On the other hand, since $v_k^\top=(a_k^\top,r_k,\lambda_k)\in\R^{d+2}$ for each $k=1,\ldots,N$,
\begin{equation}\label{a-r-lambda<V}
	|a_k|+|r_k|+|\lambda_k|
	\le
	\sqrt{3}\,|v_k|
	\le
	\sqrt{3}\,\Norm{V}.
\end{equation}

With an abuse of notation, if $\sigma\in\mathrm{Perm}(N)$ is a permutation of order $N$ then we denote the corresponding permutation of the column vectors of $V\in\P_N$ as
\begin{equation*}
	\sigma(V)
	=
	(v_{\sigma(1)},\ldots,v_{\sigma(N)})
\end{equation*}
and
\begin{equation*}
	\sigma(\mathrm{vec}(V))
	=
	\mathrm{vec}(\sigma(V))
	=
	(v_{\sigma(1)}^\top,\ldots,v_{\sigma(N)}^\top)^\top.
\end{equation*}
As a consequence, $\Norm{\sigma(V)}=\Norm{V}$ for every $V\in\P_N$ and every permutation $\sigma$.

Given any $V=(v_1,v_2,\ldots,v_N)\in\P_N$ we set
\begin{equation}\label{gamma}
	\gamma_V
	:\,=
	\max_{k\neq\ell}\bigg\{\frac{r_k+r_\ell}{|a_k-a_\ell|}\bigg\}
	\in
	(0,1).
\end{equation}
Notice that, since $r_k,r_l>\varrho_0$, we have
\begin{equation}\label{eq:gammabound}
	|a_k-a_\ell|
	\ge
	\frac{r_k+r_\ell}{\gamma_V}
	>
	\frac{2\varrho_0}{\gamma_V},\qquad k\neq\ell.
\end{equation}
In addition, we define
\begin{equation}\label{eta}
	\eta_V
	:\,=
	\min\bigg\{1\, ,\, \frac{1-\gamma_V}{\sqrt{3}\gamma_V}\bigg\}\in (0,1].
\end{equation}

\begin{lemma}\label{lemma-disjoint}
Let $V=(v_1,\ldots,v_N)\in\P_N$. If $V^1=(v^1_1,\ldots,v^1_N)$ and $V^2=(v^2_1,\ldots,v^2_N)$ are matrices in $\P_1^N$ satisfying
\begin{equation*}
	\Norm{V^i-V}
	<
	\eta_V\varrho_0,
	\qquad i=1,2,
\end{equation*}
with $\eta_V$ given by \eqref{eta}, then $V^1,V^2\in\P_N$ and
\begin{equation*}
	B(a^1_k,r^1_k)\cap B(a^2_\ell,r^2_\ell)
	=
	\emptyset,
	\qquad k\neq\ell,
\end{equation*}
where $(v^i_k)^\top=((a^i_k)^\top,r^i_k,\lambda^i_k)$ for $i=1,2$ and  $k=1,\ldots,N$.
\end{lemma}

\begin{proof}
 Our aim is to show that the inequality
\begin{equation}\label{disjoint 1-2}
	r^1_k+r^2_\ell
	<
	|a^1_k-a^2_\ell|
\end{equation}
holds for $k\neq\ell$. We observe that the fact that $V^1$ and $V^2$ belong to $\P_N$ is a direct consequence of this inequality. Indeed, if we assume for a moment that $V^2$ is equal to $V^1$, then $V^1$ satisfies \eqref{disjoint} and thus $V^1$ belongs to $\P_N$ by definition. To show \eqref{disjoint 1-2}, observe that adding and subtracting $r_k+r_\ell$ we can estimate
\begin{equation*}
	r^1_k+r^2_\ell-|a^1_k-a^2_\ell|
	\le
	r_k+r_\ell-|a^1_k-a^2_\ell|+|r^1_k-r_k|+|r^2_\ell-r_\ell|.
\end{equation*}
Since $V\in\P_N$, using \eqref{gamma} we obtain
\begin{equation*}
\begin{split}
	r_k+r_\ell-|a^1_k-a^2_\ell|
	\le
	~&
	\gamma_V|a_k-a_\ell|-|a^1_k-a^2_\ell|
	\\
	=
	~&
	-(1-\gamma_V)|a_k-a_\ell|+|a_k-a_\ell|-|a^1_k-a^2_\ell|
	\\
	\le
	~&
	-(1-\gamma_V)|a_k-a_\ell|+|a^1_k-a_k|+|a^2_\ell-a_\ell|,
\end{split}
\end{equation*}
and replacing in the previous inequality we get
\begin{equation*}
\begin{split}
	r^1_k+r^2_\ell&-|a^1_k-a^2_\ell|
	\\
	\le
	~&
	-(1-\gamma_V)|a_k-a_\ell|
	+|a^1_k-a_k|+|r^1_k-r_k|
	+|a^2_\ell-a_\ell|+|r^2_\ell-r_\ell|
	\\
	\le
	~&
	-(1-\gamma_V)|a_k-a_\ell|
	+\sqrt{3}\,\Norm{V^1-V}+\sqrt{3}\,\Norm{V^2-V},
\end{split}
\end{equation*}
where in the second inequality we have used \eqref{a-r-lambda<V}.
Then \eqref{disjoint 1-2} follows from the fact that
\begin{equation*}
	\Norm{V^i-V}
	<
	\eta_V\varrho_0
	\le
	\frac{1-\gamma_V}{\sqrt{3}\gamma_V}\varrho_0
	<
	\frac{1-\gamma_V}{2\sqrt{3}}|a_k-a_\ell|
	,\qquad i=1,2,
	\end{equation*}
which is a consequence of \eqref{eq:gammabound} and \eqref{eta}.
\end{proof}

\begin{lemma}\label{Lemma-uniqueness}
Let $V,\widetilde V\in\P_N$ such that $\Norm{\widetilde V-V}<\eta_V\varrho_0$. Then
\begin{equation*}
	\Norm{\sigma(\widetilde V)-V}>\eta_V\varrho_0
\end{equation*}
for every non trivial permutation $\sigma(\widetilde V)$ of the column vectors of $\widetilde V$.
\end{lemma}

\begin{proof}
Let us consider any permutation $\sigma\in\mathrm{Perm}(N)\setminus\{\mathrm{Id}\}$, so that there is an index $i=1,\ldots,N$ such that $\sigma(i)\neq i$. Using the notation $\sigma(V)=(v_{\sigma(1)},\ldots,v_{\sigma(N)})$, we have
\begin{equation*}
\begin{split}
	2\varrho_0
	& <
	r_{\sigma(i)}+r_i \\ &
	<
	|a_{\sigma(i)}-a_i| \\ &
	\le
	|v_{\sigma(i)}-v_i|
	\\ &
	\le
	|\widetilde v_{\sigma(i)}-v_{\sigma(i)}|+|\widetilde v_{\sigma(i)}-v_i|
	\\ &
	\le
	\Norm{\widetilde V-V}+|\widetilde v_{\sigma(i)}-v_i|
	\\ &
	<
	\eta_V\varrho_0+|\widetilde v_{\sigma(i)}-v_i|.
\end{split}
\end{equation*}
Since $\eta_V\le 1$, we have $\eta_V\le 2-\eta_V$, so that
\[
\eta_V\varrho_0 < |\widetilde v_{\sigma(i)}-v_i| \le \Norm{\sigma(\widetilde V)-V}. \qedhere
\]
\end{proof}

\subsubsection{The simple functions}

Each matrix $V\in \P_1^N$ yields a linear combination of indicator functions $q_V\in L^1(\Omega)$ defined as
\begin{equation} \label{def:qV}
	q_V
	=
	\sum_{k=1}^N\lambda_k\chi_{B(a_k,r_k)}.
\end{equation}
Moreover, for $V\in\P_N$, since by condition \eqref{disjoint} the balls $\{B(a_k,r_k)\}_{k=1,\ldots,N}$ are pairwise disjoint, we have
\begin{equation*}
	\|q_V\|_{L^p(\Omega)}
	=
	\bigg(\omega_d\sum_{k=1}^N|\lambda_k|^pr_k^d\bigg)^{1/p},
\end{equation*}
where $\omega_d=|B(0,1)|$.

In the following result we give $L^p$ continuity estimates for $q_V$ with $V\in\P_1^N$.

\begin{lemma}\label{q1-q2 in Lp}
Let $N\in\N$ and $1\le p<\infty$. There exists a constant $C>0$ (depending only on $A$, $\varrho_0$, $\varrho_1$, $N$ and $p$) such that
\begin{equation}\label{q1-q2}
	\|q_{V^1}-q_{V^2}\|_{L^p(\Omega)}
	\le
	C\Norm{V^1-V^2}^{1/p}
\end{equation}
for every $V^1=(v^1_1,\ldots,v^1_N)$ and $V^2=(v^2_1,\ldots,v^2_N)$ in $\P_1^N$.
\end{lemma}

\begin{proof}
Using the triangle inequality we get
\begin{equation*}
\begin{split}
	\|q_{V^1}-q_{V^2}&\|_{L^p(\Omega)}
	\le
	\sum_{k=1}^N\|\lambda^1_k\chi_{B(a^1_k,r^1_k)}-\lambda^2_k\chi_{B(a^2_k,r^2_k)}\|_{L^p(\Omega)}
	\\
	\le
	~&
	\sum_{k=1}^N
	\Big(|\lambda^2_k|\,\|\chi_{B(a^1_k,r^1_k)}-\chi_{B(a^2_k,r^2_k)}\|_{L^p(\Omega)}
	+
	|\lambda^1_k-\lambda^2_k|\,\|\chi_{B(a^1_k,r^1_k)}\|_{L^p(\Omega)}\Big).
\end{split}
\end{equation*}
We estimate the two terms in the sum separately. For the first term we recall that $|\lambda^2_k|<\varrho_1$ and \eqref{estimate} to obtain
\begin{equation*}
\begin{split}
	|\lambda^2_k|\|\chi_{B(a^1_k,r^1_k)}-\chi_{B(a^2_k,r^2_k)}\|_{L^p(\Omega)}
	=
	~&
	|\lambda^2_k||B(a^1_k,r^1_k)\triangle B(a^2_k,r^2_k)|^{1/p}
	\\
	\le
	~&
	C|(a^1_k,r^1_k)-(a^2_k,r^2_k)|^{1/p}
	\\
	\le
	~&
	C|(a^1_k,r^1_k,\lambda^1_k)-(a^2_k,r^2_k,\lambda^2_k)|^{1/p}.
\end{split}
\end{equation*}
For the other term,
\begin{equation*}
	\|\chi_{B(a^1_k,r^1_k)}\|_{L^p(\Omega)}
	=
	|B(a^1_k,r^1_k)|^{1/p}
	\le
	C,
\end{equation*}
where we have used that $r^1_k<\varrho_1$. On the other hand, since $(a^1_k,r^1_k,|\lambda^1_k|),(a^2_k,r^2_k,|\lambda^2_k|)\in B(0,A)\times(\varrho_0,\varrho_1)^2$,
\begin{equation*}
\begin{split}
	|\lambda^1_k-\lambda^2_k|
	\le
	~&
	|(a^1_k,r^1_k,\lambda^1_k)-(a^2_k,r^2_k,\lambda^2_k)|
	\\
	\le
	~&
	(2A+4\varrho_1)^{1-\frac{1}{p}}|(a^1_k,r^1_k,\lambda^1_k)-(a^2_k,r^2_k,\lambda^2_k)|^{1/p}.
\end{split}
\end{equation*}
Therefore 
\begin{equation*}
	|\lambda^1_k-\lambda^2_k|\,\|\chi_{B(a^1_k,r^1_k)}\|_{L^p(\Omega)}
	\le
	C|(a^1_k,r^1_k,\lambda^1_k)-(a^2_k,r^2_k,\lambda^2_k)|^{1/p}
\end{equation*}
and 
\begin{equation*}
\begin{split}
	\|q_{V^1}-q_{V^2}\|_{L^p(\Omega)}
	\le
	~&
	C\sum_{k=1}^N|(a^1_k,r^1_k,\lambda^1_k)-(a^2_k,r^2_k,\lambda^2_k)|^{1/p}
	\\
	\le
	~&
	CN\Norm{V^1-V^2}^{1/p},
\end{split}
\end{equation*}
where we have used the definition of $\Norm{\cdot}$ in the second inequality.
\end{proof}

\subsubsection{The manifold}

We denote by $M_N$ the collection of all the functions $q_V$, as defined in \eqref{def:qV}, parametrized by $V\in\P_N$, i.e.
\begin{equation*}
	M_N
	=
	\{q_V\,:\,V\in\P_N\}
	\subset
	L^1(\Omega).
\end{equation*}

Every function $q\in M_N$ determines a set $\V(q)\subset\P_N$ consisting of all the matrices $V\in\P_N$ such that $q_V=q$. Observe that by construction each matrix in $\V(q)$ can be obtained as a permutation of the column vectors of any other matrix of the set. More precisely, for any $V\in\P_N$ such that $q_V=q$,
\begin{equation*}
	\V(q)
	=
	\{\sigma(V)\,:\,\sigma\in\mathrm{Perm}(N)\}.
\end{equation*}
Equivalently, the $\V(q)$'s are the classes of equivalence in the quotient space $\P_N/\hspace{-3pt}\sim$, where $V\sim W$ if and only if $V$ and $W$ have the same column vectors arranged in different orders.

Next, we define an atlas for $M_N$ as follows: for each $V\in\P_N$ let $U_V\subset M_N$ be the set defined as
\begin{equation*}
	U_V
	:\,=
	\big\{q_{\tilde V}\in M_N\,:\, \tilde  V\in \P_N ,\; \Norm{\tilde V-V}<\eta_V\varrho_0 \big\}.
\end{equation*}
Observe that, by \Cref{Lemma-uniqueness},  the matrix $\tilde V$ is unique, so in particular the map 
\begin{equation*}
	\varphi_V\colon U_V\to\R^{N(d+2)},\qquad \varphi_V(q_{\tilde V})
	=
	\mathrm{vec}(\tilde V),
\end{equation*}
is well defined.

\begin{lemma}\label{varphi bi-lip}
For every $V\in \P_N$, the map $\varphi_V\colon U_V\subseteq L^1(\Omega)\to\R^{N(d+2)}$ is bi-Lipschitz.
\end{lemma}

\begin{proof}
Take  $q_1,q_2\in U_V$. There exist $V^1=(v^1_1,v^1_2,\ldots,v^1_N)\in\V(q_1)$ and $V^2=(v^2_1,v^2_2,\ldots,v^2_N)\in\V(q_2)$ such that
\begin{equation*}
	\Norm{V^i-V}
	<
	\eta_V\varrho_0,
	\qquad i=1,2,
\end{equation*}
so that $\varphi_V(q_i)
	=
	\mathrm{vec}(V^i)$ for $i=1,2$.
Then
\begin{equation*}
\begin{split}
	\|q_1-q_2\|_{L^1(\Omega)}
	=
	~&
	\int_\Omega|q_1-q_2|\ dx
	\\
	=
	~&
	\int_\Omega\bigg|\sum_{k=1}^N\lambda^1_k\chi_{B(a^1_k,r^1_k)}-\sum_{\ell=1}^N\lambda^2_\ell\chi_{B(a^2_\ell,r^2_\ell)}\bigg|\ dx
	\\
	=
	~&
	\sum_{k=1}^N
	\int_\Omega\big|\lambda^1_k\chi_{B(a^1_k,r^1_k)}-\lambda^2_k\chi_{B(a^2_k,r^2_k)}\big|\ dx.
\end{split}
\end{equation*}
Notice that in the third equality we have recalled that, by \Cref{lemma-disjoint}, $B(a^1_k,r^1_k)\cap B(a^2_\ell,r^2_\ell)\neq\emptyset$ if and only if $k=\ell$, and that $\{B(a^i_k,r^i_k)\}_k$ are pairwise disjoint for $i=1,2$.

By the bi-Lipschitz estimate \eqref{eq:biLip} obtained in \Cref{sect6.2},
\begin{equation*}
	\big\|\lambda^1_k\chi_{B(a^1_k,r^1_k)}-\lambda^2_k\chi_{B(a^2_k,r^2_k)}\big\|_{L^1(\Omega)}
	\asymp
	|(a^1_k,r^1_k,\lambda^1_k)-(a^2_k,r^2_k,\lambda^2_k)|
	=
	|v^1_k-v^2_k|
\end{equation*}
for $k=1,\ldots,N$.
Then
\begin{equation*}
	\|q_1-q_2\|_{L^1(\Omega)}
	\asymp
	\sum_{k=1}^N
	|v^1_k-v^2_k|
	\asymp
	\max_k |v^1_k-v^2_k|
	=
	\Norm{V^1-V^2},
	\end{equation*}
	and so
	\begin{equation*}
	\|q_1-q_2\|_{L^1(\Omega)}
	\asymp
	|\mathrm{vec}(V^1)-\mathrm{vec}(V^2)|
	=
	|\varphi_V(q_1)-\varphi_V(q_2)|,
\end{equation*}
and the proof is concluded.
\end{proof}

We have that $U_V\subseteq M_N$ for each $V\in\P_N$ and 
\begin{equation*}
	\bigcup_{V\in\P_N}U_V
	=
	M_N.
\end{equation*}
It turns out that $\A_N=\{(U_V,\varphi_V)\,:\,V\in\P_N\}$ forms an atlas for $M_N$.

\begin{lemma}\label{lem:manifgelf}
Consider $M_N$ together with the atlas $\A_N$. Then $M_N$ is an $N(d+2)$-dimensional differentiable manifold in $L^1(\Omega)$. Moreover, $M_N$ is Lipschitz in $L^1(\Omega)$.
\end{lemma}

\begin{proof}
By the definition of differentiable manifold (see \Cref{def:manifold}), we need to show that for each $V,W\in\P_N$ the following hold:
\begin{enumerate}
\item $U_V$ is open with respect to the topology of $M_N$ inherited from $L^1(\Omega)$;
\item $\varphi_V(U_V)$ is open with respect to the topology of $\R^{N(d+2)}$;
\item $\varphi_V\colon U_V\to\R^{N(d+2)}$ is a homeomorphism onto its image for each $V\in\P_N$;
\item the transition maps
\begin{equation*}
	\varphi_{W}\circ\varphi_V^{-1}\colon\varphi_V(U_V\cap U_W)\to\varphi_{W}(U_V\cap U_{W})
\end{equation*}
are continuously differentiable.
\end{enumerate}

Proof of (1): 
Let $q_0\in U_V$. We show that $q_0$ is an interior point of $U_V$. Assume by contradiction that for every $j\ge 1$ there exists $q_j\in M_N$ such that
\begin{equation*}
	\|q_j-q_0\|_{L^1(\Omega)}
	<
	\frac{1}{j}
	\qquad \text{ and } \qquad
	q_j\notin U_V.
\end{equation*}
If we write $q_j=\sum_{k=1}^N\lambda^j_k\chi_{B(a^j_k,r^j_k)}$ and $q_0=\sum_{k=1}^N\lambda_k\chi_{B(a_k,r_k)}$ then
\begin{equation*}
	\int_\Omega\bigg|\sum_{k=1}^N\left(\lambda^j_k\chi_{B(a^j_k,r^j_k)}-\lambda_k\chi_{B(a_k,r_k)}\right)\bigg|\ dx
	<
	\frac{1}{j},\qquad j\ge 1,
\end{equation*}
and as a consequence (relabeling the terms in the sums if necessary) we deduce that $a^j_k\to a_k$,  $r^j_k\to r_k$ and $\lambda^j_k\to\lambda_k$. That is, the corresponding matrices $V_{q_j}$ converge component by component to $V_{q_0}$, so $\Norm{V_{q_j}-V_{q_0}}\to 0$ as $j\to\infty$.

Since $q_j\notin U_V$, then $\Norm{V_{q_j}-V}\ge\eta_V\varrho_0$, and by the triangle inequality,
\begin{equation*}
	0
	<
	\eta_V\varrho_0-\Norm{V_{q_0}-V}
	\le
	\Norm{V_{q_j}-V_{q_0}},\qquad j\in\N.
\end{equation*}
Since the right-hand side converges to zero as $j\to\infty$, we reach a contradiction.
\\

Proof of (2): It is immediate to see that
\begin{equation*}
\begin{split}
	\varphi_V(U_V)
	&=
	\{\mathrm{vec}(\widetilde V)\,:\,\widetilde V\in\P_N\ \text{and}\ \Norm{\widetilde V-V}<\eta_V\varrho_0\}\\
	&=
	\{\mathrm{vec}(\widetilde V)\,:\,\widetilde V\in\P_1^N\text{, $\widetilde V$ satisfies \eqref{disjoint} and}\ \Norm{\widetilde V-V}<\eta_V\varrho_0\}
	\end{split}
\end{equation*}
is open in $\R^{N(d+2)}$.
\\

Proof of (3): The fact that $\varphi_V$ is a homeomorphism is a direct consequence of the fact that $\varphi_V$ is bijective and bi-Lipschitz (\Cref{varphi bi-lip}).
\\

Proof of (4): Let $q\in U_V\cap U_W$. There exist $V_q,W_q\in\V(q)$ such that $\mathrm{vec}(V_q)\in\varphi_V(U_V\cap U_{W})$ and $\mathrm{vec}(W_q)\in\varphi_{W}(U_V\cap U_{W})$. Then
\begin{equation*}
	\mathrm{vec}(W_q)
	=
	\varphi_{W}\circ\varphi_V^{-1}(\mathrm{vec}(V_q)).
\end{equation*}
Since both $V_q$ and $W_q$ are in $\V(q)$, there exists a permutation $\sigma$ such that $W_q=\sigma(V_q)$, so
\begin{equation*}
	\varphi_{W}\circ\varphi_V^{-1}(\mathrm{vec}(V_q))
	=
	\mathrm{vec}(\sigma(V_q))
	=
	\sigma(\mathrm{vec}(V_q)),
\end{equation*}
where with an abuse of notation $\sigma(\mathrm{vec}(V)):\,=\mathrm{vec}(\sigma(V))$.
Moreover, the permutation $\sigma$ is independent of $q$. To see this observe that by \Cref{varphi bi-lip} both $\varphi_V$  and $\varphi_W$ are bi-Lipschitz, so the function $\varphi_{W}\circ\varphi_V^{-1}$ is continuous, and since the number of possible permutations in  $\mathrm{Perm}(N)$ is finite we have that
\begin{equation*}
	\varphi_{W}\circ\varphi_V^{-1}
	\equiv
	\sigma,
\end{equation*}
which is continuously differentiable.
\end{proof}

\begin{lemma}\label{lem:manifdisj}
$M_{N_1}\cap M_{N_2}=\emptyset$ for every $N_1\neq N_2$.
\end{lemma}

\begin{proof}
This immediately follows by construction of $M_N$, since for $V\in \P_N$ we have
\begin{equation*}
	q_V
	=
	\sum_{k=1}^N\lambda_k\chi_{B(a_k,r_k)},
\end{equation*}
where $\lambda_k\neq 0$ for every $k$. In other words, functions in $M_{N_1}$ and in $M_{N_2}$ are linear combinations of indicator functions of a different number of balls.
\end{proof}

\subsubsection{The tangent space $T_qM_N$}
Arguing as in \Cref{example-1}, it is possible to show that the manifold $M_N$ is not embedded in $L^1(\Omega)$. If $F\colon M_N\to Y$ is a differentiable function,  the differential of $F$ at $q_V$ is the linear map $dF_{q_V}\colon T_{q_V}M_N\to Y$ given by
\begin{equation*}
	dF_{q_V}
	=
	(F\circ\varphi_V^{-1})'(\varphi_V(q_V))
	=
	(F\circ\varphi_V^{-1})'(\mathrm{vec}(V)).
\end{equation*}
Thus, given $D\in\R^{(d+2)\times N}$ with an abuse of notation we denote
\begin{equation*}
	dF_{q_V}(D)
	:=
	dF_{q_V}(\mathrm{vec}(D))
	=
	(F\circ\varphi_V^{-1})'(\mathrm{vec}(V))\mathrm{vec}(D).
\end{equation*}
Note that, without loss of generality, we have chosen the chart $(U_V,\varphi_V)$ to compute the differential (see \Cref{sub:diff}).

\subsection{The inverse problem for the Schr\"odinger equation with indicator functions on balls}\label{sub:schr}
For the rest of this section, we assume $d=3$. However, we prefer to leave $d$ in all the mathematical expressions, in order to hightlight that, unless otherwise stated,  the derivation is valid in any dimension.

\subsubsection{The PDE model}
Consider the differentiable manifold constructed in the previous section,
\begin{equation*}
	M_N
	=
	\{q_V\,:\,V\in\P_N\}
	\subset
	L^1(\Omega),
\end{equation*}
with the atlas $\A_N=\{(U_V,\varphi_V)\,:\,V\in\P_N\}$ where
\begin{equation*}
	U_V
	:\,=
	\big\{q_{\tilde V}\in M_N\,:\, \tilde  V\in \P_N ,\; \Norm{\tilde V-V}<\eta_V\varrho_0 \big\}.
\end{equation*}
and
\begin{equation*}
	\varphi_V\colon U_V\to\R^{N(d+2)},\qquad \varphi_V(q_{\tilde V})
	=
	\mathrm{vec}(\tilde V).
\end{equation*}
We  consider $M_N$ as a subset of $L^1(\Omega)$, where $\Omega\subseteq\R^d$ is a bounded Lipschitz domain containing the support of all the elements in $M_N$, that is $\Omega\supseteq B(0,A+\varrho_1)$.
As we showed in \Cref{varphi bi-lip}, $M_N$ is Lipschitz in $L^1(\Omega)$.

Let $\beta\in L^\infty(\Omega)$ be a known background potential. In this section we consider the Dirichlet problem for the Schr\"odinger equation
\begin{equation}\label{eq:dir}
	\left\{\begin{array}{rl}
	-\Delta u+(\beta+q)u=0 & \text{ in } \Omega,
	\\
	u=\phi & \text{ on } \partial\Omega,
	\end{array}\right.
\end{equation}
with $\phi\in H^{1/2}(\partial\Omega)$ and $q\in M_N$.
In order to ensure the well-posedness of the problem, we need to restrict the functions $q$ in $M_N$ to those for which $0$ is not a Dirichlet eigenvalue of $(-\Delta+\beta+q)$ in $\Omega$. For that reason, instead of working with $M_N$ we consider
\begin{equation*}
	\widetilde M_N
	=
	\{q\in M_N\,:\,0 \text{ is not a Dirichlet eigenvalue of } (-\Delta+\beta+q) \text{ in } \Omega\}.
\end{equation*}
We shall see as a consequence of the following lemmas that $\widetilde M_N$ is open in $M_N$, and so it is a manifold itself, Lipschitz in $L^1(\Omega)$.

\begin{lemma}\label{Lmu invertible}
Take $V\in \P_N$. For $D\in\R^{(d+2)\times N}$ such that $\Norm{D}\leq1$, let $L_D\colon H^1_0(\Omega)\to H^{-1}(\Omega)$ be the linear operator defined as
\begin{equation*}
	L_D
	=
	-\Delta+\beta+q_{V+D}+s,
\end{equation*}
where $s=\|\beta\|_{L^\infty(\Omega)}+\varrho_1+2$. Then there exists $c>0$ such that for every $D\in\R^{(d+2)\times N}$ such that $\Norm{D}\leq1$ we have that $L_D$ is invertible and 
\begin{equation}\label{Lmu<C}
	\|L_D\|_{\L(H_0^1(\Omega),H^{-1}(\Omega))}
	\le
	c
	\qquad \text{ and }\qquad
	\|L_D^{-1}\|_{\L(H^{-1}(\Omega),H_0^1(\Omega))}
	\le
	c.
\end{equation}
\end{lemma}

\begin{proof}

Observe that
\begin{equation*}
\begin{split}
	\big|\prodin{L_D u}{v}_{H^{-1}(\Omega)\times H^1_0(\Omega)}\big|
	=
	~&
	\bigg|\int_\Omega \nabla u\cdot\nabla v+(\beta+q_{V+D}+s)uv\, dx\bigg|
	\\
	\le
	~&
	\int_\Omega|\nabla u|\,|\nabla v|\ dx
	+
	\|\beta+q_{V+D}+s\|_{L^\infty(\Omega)}
	\int_\Omega |uv|\ dx
	\\
	\le
	~&
	3s\|u\|_{H^1(\Omega)}\|v\|_{H^1(\Omega)}
\end{split}
\end{equation*}
for every $u,v\in H^1_0(\Omega)$. On the other hand,  since
\begin{equation*}
	|q_{V+D}|
	\le
	\max\{|\lambda_k+t_k|\,:\,k=1,\ldots,N\}
	\le \max_k|\lambda_k|+\Norm{D}
	\le \rho_1+1
\end{equation*}
in $\Omega$, then
\begin{equation*}
\begin{split}
	\beta+q_{V+D}+s
	&=
	\beta+\|\beta\|_{L^\infty(\Omega)}
	+
	q_{V+D}+\rho_1+2\\
	&\ge
	q_{V+D}+\rho_1+2\\
	&\ge \rho_1+2-(\rho_1+1)\\
	&\ge 1.
	\end{split}
\end{equation*}
Then
\begin{equation*}
\begin{split}
	\prodin{L_D u}{u}
	=
	~&
	\int_\Omega\big(|\nabla u|^2+(\beta+q_{V+ D}+s)u^2\big)\ dx
	\\
	\ge
	~&
	\int_\Omega|\nabla u|^2\ dx
	+
	\int_\Omega u^2\ dx
	\\
	=
	~&
	\|u\|_{H^1(\Omega)}^2.
\end{split}
\end{equation*}
Thus the operator $L_D$ is bounded and coercive, and by the Lax-Milgram theorem $L_D$ is invertible and there exists a constant $c>0$ such that the bounds in \eqref{Lmu<C} hold.
\end{proof}

\begin{definition}
Let $X$ be a Banach space. A sequence $\{T_j\}_j$ of bounded linear operators $T_j\colon X\to X$ is \emph{collectively compact} if the set $\{T_j(x)\,:\,\|x\|_X\le 1,\ j\in\N\}$ is precompact, that is, if its closure is compact.
\end{definition}

Even if the following proof requires $d\ge 3$, by using a different Sobolev embedding it is easy to see that the following result holds true even when $d=2$.

\begin{lemma}\label{lemma-op-convergence}
Take $V\in \P_N$. Let $D_j\in\R^{(d+2)\times N}$  be such that $\Norm{D_j}\leq1$ and $D_j\to 0$. Then the sequence of operators $\{sL_{D_j}^{-1}I\}_j$ is collectively compact and converges pointwise to the operator $sL_0^{-1}I\colon H^1_0(\Omega)\to H^1_0(\Omega)$, where $I$ is the compact embedding  $H^1_0(\Omega)\to H^{-1}(\Omega)$.
\end{lemma}

\begin{proof}

\textit{Step 1: a useful bound.} For $q\in L^{\frac{d}{2}}(\Omega)$  and $u\in H^1(\Omega)$ we 
have 
\[
\begin{split}
\|qu\|_{H^{-1}(\Omega)} &=\sup_{\|v\|_{H^1_0(\Omega)}=1} |\langle qu,v\rangle_{H^{-1}(\Omega)\times H^1_0(\Omega)}| \\
&\le \sup_{\|v\|_{H^1_0(\Omega)}=1} \int_\Omega |quv|\,dx \\
&= \sup_{\|v\|_{H^1_0(\Omega)}=1} \|quv\|_{L^1(\Omega)}.
\end{split}
\]
Thus, by the generalized H\"older inequality we can estimate 
\[
\|qu\|_{H^{-1}(\Omega)} \le  \sup_{\|v\|_{H^1_0(\Omega)}=1}
\|q\|_{L^{\frac{d}{2}}(\Omega)}
\|u\|_{L^{\frac{2d}{d-2}}(\Omega)}
\|v\|_{L^{\frac{2d}{d-2}}(\Omega)}.
\]
Next,  the Sobolev embedding $H^1(\Omega)\subseteq L^{\frac{2d}{d-2}}(\Omega)$ yields
\begin{equation}\label{eq:multiplier}
\begin{split}
\|qu\|_{H^{-1}(\Omega)}& \le  C(\Omega) \sup_{\|v\|_{H^1_0(\Omega)}=1}
\|q\|_{L^{\frac{d}{2}}(\Omega)}
\|u\|_{H^1(\Omega)}
\|v\|_{H^1_0(\Omega)}\\
&=  C(\Omega)
\|q\|_{L^{\frac{d}{2}}(\Omega)}
\|u\|_{H^1(\Omega)}.
\end{split}
\end{equation}

\textit{Step 2: $L_{D_j}^{-1}\to L_{0}^{-1}$ strongly.} Take $g\in H^{-1}(\Omega)$ and set $u_j=L_{D_j}^{-1}g$ and $u_0=L_{0}^{-1}g$. We need to show that $u_j\to u_0$ in $H^1_0(\Omega)$, namely,
\[
\|u_j-u_0\|_{H^1_0(\Omega)}\to 0.
\]
Since $L_{D_j} u_j=g=L_0u_0$ we have $L_0(u_0-u_j)=(L_{D_j}-L_0)u_j$, so that
\[
u_0-u_j = L_0^{-1} (L_{D_j}-L_0) L_{D_j}^{-1}g.
\]
Thanks to \eqref{Lmu<C} we obtain
\[
\|u_0-u_j\|_{H^1_0(\Omega)}  \le c^2 \|g\|_{H^{-1}(\Omega)}\|L_{D_j}-L_0\|_{\L(H_0^1(\Omega),H^{-1}(\Omega))}.
\]
It remains to bound the operator norm of $L_{D_j}-L_0$. For $u\in H^1_0(\Omega)$ we have $(L_{D_j}-L_0)u=(q_{V+D_j}-q_V)u$, and so by \eqref{eq:multiplier}
\[
\|L_{D_j}-L_0\|_{\L(H_0^1(\Omega),H^{-1}(\Omega))}\le C(\Omega)
\|q_{V+D_j}-q_V\|_{L^{\frac{d}{2}}(\Omega)}
\le C\Norm{D_j}^{\frac{2}{d}},
\]
where the second inequality comes from \Cref{q1-q2 in Lp}. Thus  the right hand side of this expression goes to $0$ as $j\to+\infty$, and the proof follows.

\textit{Step 3: $\{sL_{D_j}^{-1}I\}_j$ is collectively compact.} We follow the proof of \cite[Lemma~3]{2000-vogelius-etal}. We need to show that the set
\[
A=\{L_{D_j}^{-1}Iv:v\in H^1_0(\Omega),\; \|v\|_{H^1_0(\Omega)}\le 1,\;j\in\N\}
\]
is precompact in $H^1_0(\Omega)$. In other words, every  sequence in $A$ must have a subsequence convergent in $H^1_0(\Omega)$. Observe that
\[
A=\{L_{D_j}^{-1}f:f\in B,\;j\in\N\},
\]
where $B=I(\overline{B_{H^1_0(\Omega)}(0,1)})\subseteq H^{-1}(\Omega)$ is compact. Let $\{L_{D_{j_k}}^{-1}f_k\}_k$ be a sequence in $A$. Since $B$ is compact, up to a subsequence we have $f_k\to f$ in $H^{-1}(\Omega)$.
If $(j_k)_k$ is bounded, it admits a definitely constant subsequence, and so the result immediately follows from the convergence $f_k\to f$. Otherwise, take a subsequence so that $j_k\to+\infty$. By \eqref{Lmu<C} we have
\[
\begin{split}
\|L_{D_{j_k}}^{-1}f_k- L_{0}^{-1}f\|_{H^1_0(\Omega)}
&\le 
\|L_{D_{j_k}}^{-1}f_k- L_{D_{j_k}}^{-1}f\|_{H^1_0(\Omega)}
+\|L_{D_{j_k}}^{-1}f- L_{0}^{-1}f\|_{H^1_0(\Omega)}\\
&\le 
c\|f_k- f\|_{H^{-1}(\Omega)}
+\|L_{D_{j_k}}^{-1}f- L_{0}^{-1}f\|_{H^1_0(\Omega)},
\end{split}
\]
where the right hand side goes to $0$ by using Step 2. 
In other words,
\[
L_{D_{j_k}}^{-1}f_k\to L_{0}^{-1}f\in H^1_0(\Omega).
\]
This concludes the proof.
\end{proof}

\begin{lemma}[{\cite[Theorem 4.3]{anselone-1971}}]\label{lemma-col.compact-ops}
Let $X$ be a Banach space and $\{T_j\colon X\to X\}_j$ be a collectively compact family of bounded linear operators that converge pointwise to a linear operator $T\colon X\to X$. Then $I_X-T$ is an isomorphism if and only if there exists $j_0\in\N$ such that for every $j\ge j_0$    the operators $I_X-T_j$ are isomorphisms and $(I_X-T_j)^{-1}$ are uniformly bounded.
\end{lemma}

Next we show that the manifold $\widetilde M_N$ is open in $M_N$, which is a consequence of the previous results.
\begin{lemma}\label{lem:MNopen}
$\widetilde M_N$ is open in $M_N$. In particular, $\widetilde M_N$ is a manifold, and is Lipschitz in $L^1(\Omega)$.
\end{lemma}
\begin{proof}
Let $q_V\in\widetilde M_N$. Since $0$ is not a Dirichlet eigenvalue of $L_0-sI=-\Delta+\beta+q_V$ by the definition of $\widetilde M_N$, we have that the operator $L_0-sI$ is an isomorphism. As a consequence, since $L_0$ is invertible by \Cref{Lmu invertible}, then
\begin{equation*}
	I_{H^1_0(\Omega)}-sL_0^{-1}I
	=
	L_0^{-1}(-\Delta+\beta+q_V)
\end{equation*}
is an isomorphism.

It is now sufficient to show that there exist $\bar\mu\in (0,1]$ and $\bar c>0$ such that for every  $D\in\R^{(d+2)\times N}$ with $\Norm{D}\le\bar\mu$, the operator 
\begin{align}\label{eq:iso}
&	I_{H^1_0(\Omega)}-sL_D^{-1}I
	=
	L_D^{-1}(-\Delta+\beta+q_{V+ D})
	\quad \text{is an isomorphism,}\\
	\label{eq:iso2}
&	\text{and }\|(I_{H^1_0(\Omega)}-sL_{D}^{-1}I)^{-1}\|_{\L(H_0^1(\Omega),H_0^1(\Omega))}\le \bar c.
\end{align}
In particular, this implies that $q_{V+ D}\in\widetilde M_N$ (the uniform bound \eqref{eq:iso2} will be useful later). Suppose, by contradiction, that for every $j\ge 1$ there exists  $D_j$ with $\Norm{D_j}\le \frac{1}{j}$,  such that, either
\begin{equation*}
	I_{H^1_0(\Omega)}-sL_{D_j}^{-1}I
	=
	L_{D_j}^{-1}(-\Delta+\beta+q_{V+ D_j})
\end{equation*}
is not an isomorphism, or $\|(I_{H^1_0(\Omega)}-sL_{D_j}^{-1}I)^{-1}\|_{\L(H_0^1(\Omega),H_0^1(\Omega))}>j$.  By \Cref{lemma-op-convergence}, the family of operators $\{sL_{D_j}^{-1}I\}_j$ is collectively compact and converges pointwise to the operator $sL_0^{-1}I$. Thus, by \Cref{lemma-col.compact-ops} we have that $I_{H^1_0(\Omega)}-sL_{0}^{-1}I$ is not an isomorphism, a contradiction. 

We have shown that $\widetilde M_N$ is open in $M_N$. The rest is an immediate consequence of \Cref{lem:manifgelf}.
\end{proof}

\begin{lemma}\label{umu H1}
Take $V\in\P_N$ such that $q_V\in\widetilde M_N$.
There exist $\bar\mu,c>0$ such that for every $\mu\in [0,\bar\mu]$, $D\in\R^{(d+2)\times N}$ such that $\Norm{D}\leq1$ and $\phi\in H^{1/2}(\partial\Omega)$ the problem
\begin{equation}\label{DPmu}
	\left\{\begin{array}{rl}
	-\Delta u_\mu+(\beta+q_{V+\mu D}) u_\mu=0 & \text{ in } \Omega,
	\\
	u_\mu=\phi & \text{ on } \partial\Omega,
	\end{array}\right.
\end{equation}
has a unique weak solution $u_\mu\in H^1(\Omega)$ satisfying
\[
	\|u_\mu\|_{H^1(\Omega)}
	\le
	c\|\phi\|_{H^{1/2}(\partial\Omega)},
\]
and
\begin{equation*}
	\|u_\mu-u_0\|_{H^1(\Omega)}
	\le
	c\|u_0\|_{H^1(\Omega)}\mu^{2/d}\,\Norm{D}^{2/d}.
\end{equation*}
\end{lemma}

\begin{proof}
Let $\bar\mu\in (0,1]$ be as in the proof of \Cref{lem:MNopen}. For $\mu\in [0,\bar\mu]$, $0$ is not a Dirichlet eigenvalue of $-\Delta+\beta+q_{V+\mu D}$, and so \eqref{DPmu} has a unique solution $u_\mu\in H^1(\Omega)$. It remains to prove the bound on  $\|w_\mu\|_{H^1(\Omega)}$, where $w_\mu=u_\mu-u_0\in H^1_0(\Omega)$.

A direct calculation shows that $w_\mu$ satisfies
\begin{equation*}
	L_{\mu D} w_\mu
	=
	sw_\mu-(q_{V+\mu D}-q_V)u_0.
\end{equation*}
By \Cref{Lmu invertible} we obtain
\begin{equation*}
	 (I_{H^1_0(\Omega)}-
	sL_{\mu D}^{-1})w_\mu=-L_{\mu D}^{-1}\bigl((q_{V+\mu D}-q_V)u_0\bigr),
\end{equation*}
and, in view of \eqref{eq:iso2} and \Cref{Lmu invertible},
\[
\begin{split}
\|w_\mu\|_{H^1_0(\Omega)}&\le \|(I_{H^1_0(\Omega)}-
	sL_{\mu D}^{-1})^{-1}\|_{\L(H_0^1(\Omega),H_0^1(\Omega))} \|L_{\mu D}^{-1}\bigl((q_{V+\mu D}-q_V)u_0\bigr)\|_{H^1_0(\Omega)} \\
	&\le C
	\|(q_{V+\mu D}-q_V)u_0\|_{H^{-1}(\Omega)}.
\end{split}
\]
Therefore, by \eqref{eq:multiplier} we obtain
\begin{equation*}
\begin{split}
	\|w_\mu\|_{H^1(\Omega)}
	\le
	~&
	C
\|q_{V+\mu D}-q_V\|_{L^{\frac{d}{2}}(\Omega)}
\|u_0\|_{H^1(\Omega)}
	\\
	\le
	~&
	C\|u_0\|_{H^1(\Omega)}\mu^{2/d}\,\Norm{D}^{2/d},
\end{split}
\end{equation*}
where in the last inequality we have used \eqref{q1-q2}.
\end{proof}

We define $\Lambda\colon\widetilde M_N\to\L_*=\L(H^{1/2}(\partial\Omega),H^{-1/2}(\partial\Omega))$ as the Dirichlet-to-Neumann map
\begin{equation*}
	\Lambda_q(\phi)
	:\,=
	\frac{\partial u_q^\phi}{\partial\nu}\Big|_{\partial\Omega},
\end{equation*}
where $u^\phi_q\in H^1(\Omega)$ is the unique solution of \eqref{eq:dir} and the Neumann derivative is (weakly) defined as 
\begin{equation*}
	\prodin{\Lambda_q(\phi)}{\psi}_{H^{-1/2}(\partial\Omega)\times H^{1/2}(\partial\Omega)}
	=
	\int_\Omega(\nabla u^\phi_q\cdot\nabla\psi+qu^\phi_q\psi)\ dx,
	\qquad  \psi\in H^1(\Omega).
\end{equation*}

\subsubsection{The map $\Lambda$ is of class $C^1$}

The following lemma holds only in dimension $d=3$. We believe that extensions to higher dimension is possible, but with different proof techniques.

\begin{lemma}\label{lem:DNdif}
The Dirichlet-to-Neumann map $\Lambda$ is differentiable in $\widetilde M_N$. Moreover, for every $q=q_V\in \widetilde M_N$,  $d\Lambda_{q}\colon\R^{(d+2)\times N}\to\L_*$ is given by
\begin{multline}\label{dF}
	\prodin{d\Lambda_{q}(D)(\phi)}{\psi}_{H^{-1/2}(\partial\Omega)\times H^{1/2}(\partial\Omega)}
	\\
	=
	\sum_{k=1}^N\bigg\{
	t_k\int_{B(a_k,r_k)}u^\phi_q u^\psi_q\ dx
	+
	\lambda_k\int_{\partial B(a_k,r_k)}\Big(h_k\cdot\frac{x-a_k}{r_k}+\rho_k\Big)u^\phi_q u^\psi_q \ d\sigma(x)
	\bigg\},
\end{multline}
for every $\phi,\psi\in H^{1/2}(\partial\Omega)$, where $u^\phi_q$ and $u^\psi_q$ are the corresponding solutions to \eqref{eq:dir} and
\begin{equation}\label{eq:VD}
	V
	=
	\begin{pmatrix}
	a_1 & a_2 & \cdots & a_N \\ r_1 & r_2 & \cdots & r_N \\ \lambda_1 & \lambda_2 & \cdots & \lambda_N
	\end{pmatrix},\qquad 
	D
	=
	\begin{pmatrix}
	h_1 & h_2 & \cdots & h_N \\
	\rho_1 & \rho_2 & \cdots & \rho_N \\
	t_1 & t_2 & \cdots & t_N
	\end{pmatrix}
	\in
	\R^{(d+2)\times N}.
\end{equation}
\end{lemma}

\begin{proof}
\textit{Step I: setting up the problem.}
Take $q=q_V \in \widetilde M_N$, for some $V\in\P_N$. In order to show that $\Lambda\colon\widetilde M_N\to\mathcal L_* $ is differentiable at $q$, we need to prove that
\[
\Lambda\circ \varphi_V^{-1}\colon \varphi_V(U_V)\to \mathcal L_*
\]
is Fr\'echet differentiable. Moreover, we need to show that its differential is given by  \eqref{dF}. More precisely, we need to show that
\begin{equation*}
    \lim_{D\to 0} \frac{\|\Lambda_{q_{V+D}}-\Lambda_{q_V} - d\Lambda_q(D)\|_{\L_*}}{\Norm{D}}=0.
\end{equation*}
Equivalently, we will prove that
\begin{equation*}
    \lim_{\mu\to 0^+} \sup_D \frac{\|\Lambda_{q_{V+\mu D}}-\Lambda_{q_V} - \mu \,d\Lambda_q(D)\|_{\L_*}}{\mu}=0,
\end{equation*}
where the supremum is taken over all $D\in \R^{(d+2)\times N}$ such that $\Norm{D}=1$.

By definition of operator norm we have
\begin{equation*}
    \|\Lambda_{q_{V+\mu D}}-\Lambda_{q_V} - \mu \,d\Lambda_q(D)\|_{\L_*} =\sup_{\phi,\psi}|\langle (\Lambda_{q_{V+\mu D}}-\Lambda_{q_V} - \mu \,d\Lambda_q(D))\phi,\psi\rangle|,
\end{equation*}
where the supremum is taken over all $\phi,\psi\in H^{\frac12}(\partial\Omega) $ such that $\|\phi\|_{H^{\frac12}(\partial\Omega)}=\|\psi\|_{H^{\frac12}(\partial\Omega)}=1$. By the triangle inequality we have
\begin{multline*}
    \|\Lambda_{q_{V+\mu D}}-\Lambda_{q_V} - \mu \,d\Lambda_q(D)\|_{\L_*} \le \sup_{\phi,\psi}
    |\langle (\Lambda_{q_{V+\mu D}}-\Lambda_{q_V})\phi,\psi\rangle -\int_\Omega (q_{V+\mu D}-q_V) u^\phi_{q} u^\psi_{q}\,dx|\\
    +\sup_{\phi,\psi}
    |\langle \int_\Omega (q_{V+\mu D}-q_V) u^\phi_{q} u^\psi_{q}\,dx-\mu\langle d\Lambda_q(D)\phi,\psi\rangle|
\end{multline*}
We deal with these two terms separately.

\medskip
\textit{Step II: We show that
\[
\lim_{\mu\to0^+} \sup_{\phi,\psi,D} \frac{
    |\langle (\Lambda_{q_{V+\mu D}}-\Lambda_{q_V})\phi,\psi\rangle -\int_\Omega (q_{V+\mu D}-q_V) u^\phi_{q} u^\psi_{q}\,dx|}{\mu}=0.
\]
}
Recalling Alessandrini's identity \cite{alessandrini1988} we have
\begin{equation*}
	\prodin{(\Lambda_{q_{V+\mu D}}-\Lambda_{q_V})\phi}{\psi}
	=
	\int_\Omega(q_{V+\mu D}-q_V)\,u_{q_{V+\mu D}}^\phi u^\psi_{q}\ dx,
\end{equation*}
so that by \eqref{eq:multiplier} we obtain
\begin{equation*}
    \begin{split}
        |\langle (\Lambda_{q_{V+\mu D}}-\Lambda_{q_V})\phi,\psi\rangle &-\int_\Omega (q_{V+\mu D}-q_V) u^\phi_{q} u^\psi_{q}\,dx| \\
        &= 
        |\int_\Omega(q_{V+\mu D}-q_V)(u_{q_{V+\mu D}}^\phi-u_{q_{V}}^\phi)u^\psi_{q}\ dx|\\ 
        &\le \|(q_{V+\mu D}-q_V)u^\psi_{q}\|_{H^{-1}(\Omega)}\|u_{q_{V+\mu D}}^\phi-u_{q_{V}}^\phi\|_{H^1(\Omega)}  \\
        &\le C(\Omega) \|u^\psi_{q}\|_{H^1(\Omega)}\|q_{V+\mu D}-q_V\|_{L^{d/2}(\Omega)}\|u_{q_{V+\mu D}}^\phi-u_{q_{V}}^\phi\|_{H^1(\Omega)}.
    \end{split}
\end{equation*}
We estimate all these factors as follows:
\begin{itemize}
    \item $\|u^\psi_{q}\|_{H^1(\Omega)}\le C \|\psi\|_{H^{\frac12}(\partial\Omega)}=C$ because $q\in \widetilde M_N$, and so problem \eqref{eq:dir} is well posed;
    \item $\|q_{V+\mu D}-q_V\|_{L^{d/2}(\Omega)}\le C\mu^{2/d}\Norm{D}^{2/d}=C\mu^{2/d}$ thanks to \Cref{q1-q2 in Lp};
    \item $\|u_{q_{V+\mu D}}^\phi-u_{q_{V}}^\phi\|_{H^1(\Omega)}\le C\|u^\phi_{q}\|_{H^1(\Omega)}\mu^{\frac2d}\Norm{D}^{\frac2d}\le C\|\phi\|_{H^{\frac12}(\partial\Omega)}\mu^{\frac2d}=C\mu^{\frac2d}$ thanks to \Cref{umu H1}.
\end{itemize}
Altogether, we have
\begin{equation*}
   \sup_{\phi,\psi} \frac{|\langle (\Lambda_{q_{V+\mu D}}-\Lambda_{q_V})\phi,\psi\rangle -\int_\Omega (q_{V+\mu D}-q_V) u^\phi_{q} u^\psi_{q}\,dx|}{\mu}\le C\mu^{\frac4d-1}=C\mu^{\frac13}\to 0,
\end{equation*}
uniformly in $D$ with $\Norm{D}=1$, as desired.

\medskip
\textit{Step III: We show that
\[
\lim_{\mu\to0^+} \sup_{\phi,\psi,D}\frac{
    |\langle \int_\Omega (q_{V+\mu D}-q_V) u^\phi_{q} u^\psi_{q}\,dx-\mu\langle d\Lambda_q(D)\phi,\psi\rangle|}{\mu}=0.
\]
}
Writing
\begin{equation*}
	q_{V+\mu D}-q_V
	=
	\sum_{k=1}^N\big((\lambda_k+\mu t_k)\chi_{B(a_k+\mu h_k,r_k+\mu\rho_k)}-\lambda_k\chi_{B(a_k,r_k)}\big),
\end{equation*}
we immediately derive
\begin{equation*}
    \begin{split}
        &\int_\Omega (q_{V+\mu D}-q_V) u^\phi_{q} u^\psi_{q}\,dx\\
        &= \sum_{k=1}^N\bigg((\lambda_k+\mu t_k)\int_{B(a_k+\mu h_k,r_k+\mu\rho_k)}u^\phi_{q} u^\psi_{q}\ dx-
	\lambda_k\int_{B(a_k,r_k)}u^\phi_{q} u^\psi_{q}\ dx\bigg)\\
	&=\sum_{k=1}^N\lambda_k\bigg(\int_{B(a_k+\mu h_k,r_k+\mu\rho_k)}u^\phi_{q} u^\psi_{q}\ dx-\int_{B(a_k,r_k)}u^\phi_{q} u^\psi_{q}\ dx\bigg)
	\\
	&\qquad +\mu \sum_{k=1}^N t_k\int_{B(a_k+\mu h_k,r_k+\mu\rho_k)}u^\phi_{q} u^\psi_{q}\ dx.
    \end{split}
\end{equation*}
Therefore, recalling \eqref{dF}, it is sufficient to show that for every $k=1,\dots,N$
\begin{equation}\label{eq:IIIa}
    \lim_{\mu\to 0^+}\sup_{\phi,\psi,D}\left|\int_{B(a_k+\mu h_k,r_k+\mu\rho_k)} u^\phi_{q} u^\psi_{q}\ dx-\int_{B(a_k,r_k)}u^\phi_{q} u^\psi_{q}\ dx\right|=0
\end{equation}
and
\begin{multline}\label{eq:IIIb}
    \lim_{\mu\to 0^+}\sup_{\phi,\psi,D}\mu^{-1}\left|\int_{B(a_k+\mu h_k,r_k+\mu\rho_k)}u^\phi_{q} u^\psi_{q}\ dx-\int_{B(a_k,r_k)}u^\phi_{q} u^\psi_{q}\ dx\right.\\
    -\left.\mu \int_{\partial B(a_k,r_k)}\Big(h_k\cdot\frac{x-a_k}{r_k}+\rho_k\Big)u^\phi_{q} u^\psi_{q} \ d\sigma(x) \right|=0.
\end{multline}
It is immediate to see that \eqref{eq:IIIb} implies \eqref{eq:IIIa}, and so it remains to prove \eqref{eq:IIIb}.

Suitable changes of variables yield
\begin{equation}\label{first}
\begin{split}
&\int_{B(a_k+\mu h_k,r_k+\mu\rho_k)}u^\phi_{q} u^\psi_{q}\ dx-\int_{B(a_k,r_k)}u^\phi_{q} u^\psi_{q}\, dx
	\\
	=
	~&
\int_{B(0,1)}\big[(r_k+\mu\rho_k)^d(u^\phi_{q} u^\psi_{q})(a_k+\mu h_k+(r_k+\mu\rho_k)x)
	- r_k^d(u^\phi_{q} u^\psi_{q})(a_k+r_kx)\big]\, dx
	\\
	=
	~&
	\bigl((r_k+\mu\rho_k)^d-r_k^d\bigr)\int_{B(0,1)}(u^\phi_{q} u^\psi_{q})(a_k+\mu h_k+(r_k+\mu\rho_k)x)\, dx
	\\
	~&
	+r_k^d\int_{B(0,1)}\big[(u^\phi_{q} u^\psi_{q})(a_k+\mu h_k+(r_k+\mu\rho_k)x)-(u^\phi_{q} u^\psi_{q})(a_k+r_kx)\big]\, dx.
\end{split}
\end{equation}
Recalling that $r_k<\varrho_1$ and $|\rho_k|\leq\Norm{D}=1$, it is easy to see that 
\begin{equation*}
    (r_k+\mu\rho_k)^d-r_k^d-dr_k^{d-1}\mu\rho_k
    =
    O(\mu^2)
\end{equation*}
holds uniformly for every $\Norm{D}=1$. Then, since $$\left|\int_{B(0,1)}(u^\phi_{q} u^\psi_{q})(a_k+\mu h_k+(r_k+\mu\rho_k)x)\, dx \right|\le C\|u^\psi_{q}\|_{L^2(\Omega)}\|u^\phi_{q}\|_{L^2(\Omega)}=C,$$ we have
\begin{multline}\label{second}
    \bigl((r_k+\mu\rho_k)^d-r_k^d\bigr)\int_{B(0,1)}(u^\phi_{q} u^\psi_{q})(a_k+\mu h_k+(r_k+\mu\rho_k)x)\, dx\\
    =dr_k^{d-1}\mu\rho_k \int_{B(0,1)}(u^\phi_{q} u^\psi_{q})(a_k+\mu h_k+(r_k+\mu\rho_k)x)\, dx+O(\mu^2),
\end{multline}
where all the constants hidden in the $O$ symbols (here and below) are independent of $\phi$, $\psi$ and $D$.

We consider now a subdomain $\Omega'\Subset\Omega$ such that $B(a_k+\mu h_k,r_k+\mu \rho_k)\subseteq\Omega'$ for every $k$ and for every $\mu\in [0,\bar\mu]$, where $\bar\mu>0$ is given by \Cref{umu H1}. Thanks to classical regularity results for elliptic PDEs,  we have that $u^\phi_{q}, u^\psi_{q}\in C^{1,\alpha}(\overline{\Omega'})$ for some $\alpha\in (0,1)$ and
\begin{equation}\label{eq:C1}
    \|u^\phi_{q}\|_{C^{1,\alpha}(\overline{\Omega'})}\le C\|\phi\|_{H^{\frac12}(\partial\Omega)}=C,\qquad  \|u^\psi_{q}\|_{C^{1,\alpha}(\overline{\Omega'})}\le C\|\psi\|_{H^{\frac12}(\partial\Omega)}=C.
\end{equation}
In particular, $\|u^\phi_{q}u^\psi_{q}\|_{C^{1,\alpha}(\overline{\Omega'})}\le C$. Thus, applying the identity
\begin{equation*}
	|u(y)-u(x)-(y-x)\cdot\nabla u(x)|
	\le
	\|u\|_{C^{1,\alpha}(\overline{\Omega'})}|y-x|^{1+\alpha},\qquad x,y\in \overline{\Omega'} ,
\end{equation*}
to $u=u^\phi_{q}u^\psi_{q}$ we have
\begin{equation*}
\begin{split}
	\big|(u^\phi_{q}u^\psi_{q})(a_k+\mu h_k+(r_k+\mu\rho_k)x)&-(u^\phi_{q}u^\psi_{q})(a_k+r_kx)\bigr.\\
&\qquad \bigl.	-\mu(h_k+\rho_kx)\cdot\nabla(u^\phi_{q}u^\psi_{q})(a_k+r_kx)\big|
	\\ &
	\le
C(|h_k|+|\rho_k|)^{1+\alpha}\mu^{1+\alpha}
\\ &
	=O(\mu^{1+\alpha}),
\end{split}
\end{equation*}
where in the last inequality we have used that $\Norm{D}=1$. Therefore
\begin{multline}\label{third}
    \int_{B(0,1)}\Big[(u^\phi_{q} u^\psi_{q})(a_k+\mu h_k+(r_k+\mu\rho_k)x)-(u^\phi_{q} u^\psi_{q})(a_k+r_kx)\Big]\, dx\\
    =\mu\int_{B(0,1)} (h_k+\rho_kx)\cdot\nabla(u^\phi_{q}u^\psi_{q})(a_k+r_kx)\, dx+O(\mu^{1+\alpha})=O(\mu).
\end{multline}

Finally, using \eqref{first}, \eqref{second} and \eqref{third} the expression in \eqref{eq:IIIb} may be rewritten (after a suitable change of variables) as
\begin{multline}\label{almostthere2}
    dr_k^{-1}\rho_k \int_{B(a_k,r_k)}(u^\phi_{q} u^\psi_{q})(x)\, dx+O(\mu)\\
    + \int_{B(a_k,r_k)} \Big(h_k+\rho_k\frac{x-a_k}{r_k}\Big)\cdot\nabla(u^\phi_{q}u^\psi_{q})(x)\, dx+O(\mu^{\alpha})
    \\ -\int_{\partial B(a_k,r_k)}\Big(h_k\cdot\frac{x-a_k}{r_k}+\rho_k\Big)u^\phi_{q} u^\psi_{q} \ d\sigma(x).
\end{multline}
Furthermore, noting that
\begin{equation*}
	d\frac{\rho_k}{r_k} u^\phi_{q} u^\psi_{q}+\Big(h_k+\rho_k\frac{x-a_k}{r_k}\Big)\cdot\nabla(u^\phi_{q} u^\psi_{q})
	\\
	=
	\diver\Big(\Big(h_k+\rho_k\frac{x-a_k}{r_k}\Big)u^\phi_{q} u^\psi_{q}\Big),
\end{equation*}
thanks to the divergence theorem we obtain that the three integral terms in \eqref{almostthere2} cancel, and so \eqref{eq:IIIb} is proven. This concludes the proof.
\end{proof}

Next, we show that $\Lambda$ is of class $C^1$.

\begin{lemma}\label{lem:DNC1}
The Dirichlet-to-Neumann map is continuously differentiable, that is $\Lambda\in C^1(\widetilde M_N,\L_*)$. Furthermore, for every $q_{V^1}\in\widetilde M_N$ there exists $C>0$ such that
\begin{equation*}
	\|d\Lambda_{q_{V^1}}-d\Lambda_{q_{V^2}}\|_{\R^{(d+2)\times N}\to \L_*}
	\\
	\le
	C\,\Norm{V^1-V^2}^{\frac 2 d}
\end{equation*}
for every $V^2\in\P_N$ such that $q_{V^2}\in\widetilde M_N$.
\end{lemma}

\begin{proof}
For the sake of simplicity, we introduce a functional $\F\colon\P_N\times\R^{(d+2)\times N}\times W^{1,1}(\Omega)\to\R$ given by
\begin{equation*}
	\F(V,D,u)
	=
	\sum_{k=1}^N\bigg\{
	t_k\int_{B(a_k,r_k)}u\ dx
	+
	\lambda_k\int_{\partial B(a_k,r_k)}\Big(h_k\cdot\frac{x-a_k}{r_k}+\rho_k\Big)u\ d\sigma(x)
	\bigg\},
\end{equation*}
where $V$ and $D$ are given by \eqref{eq:VD}.
Observe that, defined in this way, we have $\prodin{d\Lambda_{q_V}(D)(\phi)}{\psi}=\F(V,D,u_{q_V}^\phi  u_{q_V}^\psi)$ by \eqref{dF}. Using the divergence theorem we can rewrite $\F(V,D,u)$ as
\begin{equation*}
	\F(V,D,u)
	=
	\sum_{k=1}^N\int_{B(a_k,r_k)}\Big[\Big(t_k+\lambda_kd\frac{\rho_k}{r_k}\Big)\,u
	+
	\lambda_k\Big(h_k+\rho_k\frac{x-a_k}{r_k}\Big)\cdot\nabla u\Big]\ dx
\end{equation*}
and by \eqref{a-r-lambda<V} we obtain the following estimate,
\begin{equation}\label{FVDu-estimate-2}
\begin{split}
	\big|\F(V,D,u)\big|
	\le
	~&
	C\sum_{k=1}^N\big(|h_k|+|\rho_k|+|t_k|\big)
	\int_{B(a_k,r_k)}\big(|u|+|\nabla u|\big)\ dx
	\\
	\le
	~&
	C\Norm{D}\,\|u\|_{W^{1,1}(\Omega)}.
\end{split}
\end{equation}

Take $V^1\in\P_N$ such that $q_{V^1}\in\widetilde M_N$. We will show that $d\Lambda$ is continuous in $q_{V^1}$. Let $V^2\in\P_N$ such that $\Norm{V^1-V^2}\le \bar\mu$, where $\bar\mu>0$ is given in \Cref{umu H1}, so that $q_{V^2}\in\widetilde M_N$. Given $\phi,\psi\in H^{1/2}(\partial\Omega)$ we consider the $H^1(\Omega)$-solutions $u_j=u_{q_{V^j}}^\phi$ and $v_j=v_{q_{V^j}}^\psi$ with $j=1,2$. Recalling the formula for the differential of the Dirichlet-to-Neumann map \eqref{dF} and the definition of $\F$,
\begin{equation*}
\begin{split}
	\big|\prodin{(d\Lambda_{q_{V^1}}-d\Lambda_{q_{V^2}})(D)(\phi)}{\psi}\big|
	=
	~&
	\big|\F(V^1,D,u_1v_1)-\F(V^2,D,u_2v_2)\big|
	\\
	\le
	~&
	\big|\F(V^1,D,u_1v_1)-\F(V^2,D,u_1v_1)\big|
	\\
	~&
	+
	\big|\F(V^2,D,u_1v_1-u_2v_2)\big|
\end{split}
\end{equation*}
By \Cref{umu H1} we have the following estimates, 
\begin{eqnarray*}
	&&
	\|u_1-u_2\|_{H^1(\Omega)}
	\le
	C \|\phi\|_{H^\frac12(\partial\Omega)}\Norm{V^1-V^2}^{\frac 2 d},
	\\
	&&
	\|v_1-v_2\|_{H^1(\Omega)}
	\le
	C \|\psi\|_{H^\frac12(\partial\Omega)}\Norm{V^1-V^2}^{\frac 2 d},
\end{eqnarray*}
for some $C>0$ independent of $V^2$, $\phi$ and $\psi$ (in the following, we will use the same letter $C$ to denote different positive constants independent of $V^2$, $\phi$ and $\psi$). Hence, by using \eqref{FVDu-estimate-2} we can get the estimate
\begin{equation*}
\begin{split}
	&\big|\F(V^2,D,u_1v_1-u_2v_2)\big|
	\\
	&\le
	C\|u_1v_1-u_2v_2\|_{W^{1,1}(\Omega)}\Norm{D}
	\\
	&\le
	C\big(\|u_1\|_{H^1(\Omega)}\|v_1-v_2\|_{H^1(\Omega)}+\|v_2\|_{H^1(\Omega)}\|u_1-u_2\|_{H^1(\Omega)}\big)\Norm{D}
	\\
	&\le
	C\|\phi\|_{H^\frac12(\partial\Omega)}\|\psi\|_{H^\frac12(\partial\Omega)}\Norm{D}\,\Norm{V^1-V^2}^{\frac 2 d}.
\end{split}
\end{equation*}

In what follows we focus on the difference $\F(V^1,D,u_1v_1)-\F(V^2,D,u_1v_1)$. Performing changes of variables we obtain
\begin{multline*}
	\F(V^1,D,u_1v_1)-\F(V^2,D,u_1v_1)
	=
	\sum_{k=1}^N
	t_k\bigg(
	\int_{B(a^1_k,r^1_k)}u_1v_1\ dx
	-
	\int_{B(a^2_k,r^2_k)}u_1v_1\ dx
	\bigg)
	\\
	+
	\sum_{k=1}^N\int_{\S^{d-1}}(h_k\cdot x+\rho_k)\big(\lambda^1_k(r^1_k)^{d-1}(u_1v_1)(a^1_k+r^1_kx)
	\\
	-\lambda^2_k(r^2_k)^{d-1}(u_1v_1)(a^2_k+r^2_kx)\big)\ d\sigma(x),
\end{multline*}
so taking absolute values and recalling \eqref{a-r-lambda<V} we get
\begin{equation*}
\begin{split}
	\big|\F(V^1,D,u_1,v_1)-\F(V^2,D,u_1,v_1)\big|
	\le
	~&
	\sum_{k=1}^N\big(|t_k|[\textbf{I}_k]+(|h_k|+|\rho_k|)[\textbf{II}_k]\big)
	\\
	\le
	~&
	\sum_{k=1}^N\big(|h_k|+|\rho_k|+|t_k|\big)\big([\textbf{I}_k]+[\textbf{II}_k]\big)
	\\
	\le
	~&
	C\Norm{D}\sum_{k=1}^N\big([\textbf{I}_k]+[\textbf{II}_k]\big),
\end{split}
\end{equation*}
where
\begin{eqnarray*}
	\displaystyle
	[\textbf{I}_k]
	&=&
	\int_{B(a^1_k,r^1_k)\triangle B(a^2_k,r^2_k)}|u_1v_1|\ dx,
	\\
	\displaystyle
	[\textbf{II}_k]
	&=&
	\int_{\S^{d-1}}\big|\lambda^1_k(r^1_k)^{d-1}(u_1v_1)(a^1_k+r^1_kx)-\lambda^2_k(r^2_k)^{d-1}(u_1v_1)(a^2_k+r^2_kx)\big|\ d\sigma(x).
\end{eqnarray*}

We claim that
\begin{equation}\label{claim2}
	[\textbf{I}_k],\ [\textbf{II}_k]
	\le
	C\|\phi\|_{H^\frac12(\partial\Omega)}\|\psi\|_{H^\frac12(\partial\Omega)}\Norm{V^1-V^2}
\end{equation}
for each $k=1,\ldots,N$. This implies that
\begin{multline*}
	\big|\prodin{(d\Lambda_{q_{V^1}}-d\Lambda_{q_{V^2}})(D)(\phi)}{\psi}\big|
	\\
	\le
	C\|\phi\|_{H^\frac12(\partial\Omega)}\|\psi\|_{H^\frac12(\partial\Omega)}\Norm{D}\,\Norm{V^1-V^2}^{\frac 2 d}
	\\
	+
	C\|\phi\|_{H^\frac12(\partial\Omega)}\|\psi\|_{H^\frac12(\partial\Omega)}\Norm{D}\,\Norm{V^1-V^2}.
\end{multline*}
Then
\begin{equation*}
	\|(d\Lambda_{q_{V^1}}-d\Lambda_{q_{V^2}})(D)\|_*
	\le
	C\Norm{D}\,\Norm{V^1-V^2}^{\frac 2 d}
\end{equation*}
for every $D\in\R^{(d+2)\times N}$. Therefore, $d\Lambda$ is continuous in $q_{V^1}$, as desired.

Next we prove \eqref{claim2}. For the first term, recalling \eqref{estimate} and \eqref{eq:C1}, we have
\begin{equation*}
\begin{split}
	[\textbf{I}_k] =
	~&
	\int_{B(a^1_k,r^1_k)\triangle B(a^2_k,r^2_k)}|u_1v_1|\ dx
	\\
	\le
	~&
	\|u_1v_1\|_{L^\infty(\Omega')}|B(a^1_k,r^1_k)\triangle B(a^2_k,r^2_k)|
	\\
	\le
	~&
	C\|u_1\|_{L^\infty(\Omega')}\|v_1\|_{L^\infty(\Omega')}|(a^1_k,r^1_k)-(a^2_k,r^2_k)|
	\\
	\le
	~&
	C\|\phi\|_{H^\frac12(\partial\Omega)}\|\psi\|_{H^\frac12(\partial\Omega)}|(a^1_k,r^1_k,\lambda^1_k)-(a^2_k,r^2_k,\lambda^2_k)|
	\\
	\le
	~&
	C\|\phi\|_{H^\frac12(\partial\Omega)}\|\psi\|_{H^\frac12(\partial\Omega)}\Norm{V^1-V^2}
\end{split}
\end{equation*}
for each $k=1,\ldots,N$.
Let us focus now on the other term. Adding and subtracting terms and using that $|\lambda^2_k|(r^2_k)^{d-1}<\varrho_1^d$,
\begin{multline*}
	\big|\lambda^1_k(r^1_k)^{d-1}(u_1v_1)(a^1_k+r^1_kx)-\lambda^2_k(r^2_k)^{d-1}(u_1v_1)(a^2_k+r^2_kx)\big|
	\\
	\le
	\|u_1v_1\|_{L^\infty(\Omega)}|\lambda^1_k(r^1_k)^{d-1}-\lambda^2_k(r^2_k)^{d-1}|
	\\
	+
	\varrho_1^d\big|(u_1v_1)(a^1_k+r^1_kx)-(u_1v_1)(a^2_k+r^2_kx)\big|.
\end{multline*}
First, since $r^1_k,r^2_k,|\lambda^1_k|,|\lambda^2_k|<\varrho_1$,
\begin{equation*}
\begin{split}
	|\lambda^1_k(r^1_k)^{d-1}-\lambda^2_k(r^2_k)^{d-1}|
	\le
	~&
	\varrho_1^{d-1}|\lambda^1_k-\lambda^2_k|
	+
	\varrho_1|(r^1_k)^{d-1}-(r^2_k)^{d-1}|
	\\
	\le
	~&
	\varrho_1^{d-1}\big(|\lambda^1_k-\lambda^2_k|+(d-1)|r^1_k-r^2_k|\big)
	\\
	\le
	~&
	C|(a^1_k,r^1_k,\lambda^1_k)-(a^2_k,r^2_k,\lambda^2_k)|
	\\
	\le
	~&
	C\Norm{V^1-V^2}.
\end{split}
\end{equation*}
For the other term, since $u_1,v_1\in C^1(\Omega')$, using again \eqref{eq:C1},
\begin{equation*}
\begin{split}
	|(u_1v_1)(a^1_k+r^1_kx)&-(u_1v_1)(a^2_k+r^2_kx)|
	\\
	&\le
	\|\nabla(u_1v_1)\|_{L^\infty(\Omega')}|(a^1_k+r^1_kx)-(a^2_k+r^2_kx)|
	\\
	&\le
	\|\phi\|_{H^\frac12(\partial\Omega)}\|\psi\|_{H^\frac12(\partial\Omega)}(|a^1_k-a^2_k|+|r^1_k-r^2_k|)
	\\
	&\le
	C\|\phi\|_{H^\frac12(\partial\Omega)}\|\psi\|_{H^\frac12(\partial\Omega)}(|(a^1_k,r^1_k,\lambda^1_k)-(a^2_k,r^2_k,\lambda^2_k)|)
	\\
	&\le
	C\|\phi\|_{H^\frac12(\partial\Omega)}\|\psi\|_{H^\frac12(\partial\Omega)}\Norm{V^1-V^2}
\end{split}
\end{equation*}
for every $x\in\S^{d-1}$.
Hence,
\begin{equation*}
\begin{split}
	[\textbf{II}_k]
	\le
	~&
	C\|\phi\|_{H^\frac12(\partial\Omega)}\|\psi\|_{H^\frac12(\partial\Omega)}\Norm{V^1-V^2}.
\end{split}
\end{equation*}
and \eqref{claim2} follows.\qedhere

\end{proof}

\subsubsection{Complex geometrical optics (CGO) solutions}\label{subsub:CGO}

Recall that we consider the case $d=3$, even though the following construction works for any $d\ge 3$. We recall some basic properties of a special family of solutions of the Schr\"odinger equation first introduced in \cite{Faddeev1965} for quantum inverse scattering and in \cite{Sylvester1987} in inverse boundary value problems. 

Let $\zeta\in\mathbb{C}^d$, such that $\zeta\cdot\zeta=0$. From \cite[Lemma 5.5]{paivarinta2004}, for $|\zeta| \ge C(\|\beta + q_V\|_{L^{\infty}(\Omega)})$ sufficiently large, there exists a solution $u$ of the equation
\begin{equation*}
	-\Delta u+(\beta+q_V)u=0 \quad \text{ in } \Omega,
\end{equation*}
of the form
\begin{equation}\label{eq:zetaR}
	u(x)
	=
	e^{i\zeta\cdot x}(1+R(x)),
\end{equation}
where $R$ satisfies

%More precisely, $R$ solves the PDE
%\begin{equation*}
%	\Delta R+2i\zeta\cdot\nabla R-(\beta+q_V)(1+R)
%	=
%	0
%	\qquad \text{ in $\Omega$,}
%\end{equation*}
\begin{equation}\label{L2norm R}
	\|R\|_{L^2(\Omega)}
	\le
	\frac{C}{|\zeta|}
	\qquad\text{ and }\qquad
	\|\nabla R\|_{L^2(\Omega)}
	\le
	C,
\end{equation}
for some constant $C\ge 1$.

Therefore, if $B=B(a,r)$ is a ball with $(a,r,\lambda)\in\P_1$, then
\begin{equation}
	\|R\|_{L^2(B)}
	\le
	\frac{C}{|\zeta|},
\end{equation}
and using the trace inequality (see \cite{ding1996}), interpolation inequality in $H^\theta(\Omega)$ ($\frac{1}{2}<\theta<1$) and \eqref{L2norm R} we can estimate $\|R\|_{L^2(\partial B(a,r))}$, as follows: 
\begin{equation}
\begin{split}
	\|R\|_{L^2(\partial B)}
	\le
	\|R\|_{H^{\theta-1/2}(\partial B)}
	\le
	~&
	C\|R\|_{H^\theta(\Omega)}
	\\
	\le
	~&
	C\|R\|_{H^0(\Omega)}^{1-\theta}\|R\|_{H^1(\Omega)}^\theta
	\\
	\le
	~&
	C\|R\|_{L^2(\Omega)}^{1-\theta}\left(\|R\|_{L^2(\Omega)}+\|\nabla R\|_{L^2(\Omega)}\right)^\theta
	\\
	\le
	~&
	C\Big(\frac{1}{|\zeta|}+1\Big)\frac{1}{|\zeta|^{1-\theta}}
	\\
	\le
	~&
	\frac{C}{|\zeta|^{1-\theta}}.
\end{split}
\end{equation}

Now let $\xi\in\R^d\setminus\{0\}$ be an arbitrary vector. We want to choose $\zeta_1,\zeta_2 \in \C^d$, such that $\zeta_j \cdot \zeta_j = 0$ for $j=1,2$ and $\zeta_1 + \zeta_2 = \xi$. Given any pair of unitary orthogonal vectors $\eta_1$ and $\eta_2$ in the orthogonal subspace $\{\xi\}^\bot\subset\R^d$ (that is, $\xi\cdot\eta_1=\xi\cdot\eta_2=\eta_1\cdot\eta_2=0$ and $|\eta_1|=|\eta_2|=1$), we define
\begin{equation}\label{eq:zeta}
	\zeta_1=\zeta_1(s)=\frac{1}{2}\xi+a\eta_1+b\eta_2
	\qquad \text{ and } \qquad
	\zeta_2=\zeta_2(s)=\frac{1}{2}\xi-a\eta_1-b\eta_2
\end{equation}
where $a$ and $b$ are complex numbers chosen so that $\zeta_1,\zeta_2\in\C^d$ satisfy $\zeta_j\cdot\zeta_j=0$ and $|\zeta_j(s)|=(\zeta_j\cdot\overline{\zeta_j})^{1/2}=s$ for $j=1,2$ where $s\ge 1$ is a free parameter. For example:
\begin{equation*}
	a
	=
	\frac{is}{\sqrt{2}}
	\qquad\text{ and }\qquad
	b
	=
	\frac{1}{\sqrt{2}}\sqrt{s^2-\frac{|\xi|^2}{2}}.
\end{equation*}
Then, by the above discussion there exists a constant $C = C(\|q+\beta\|_{L^\infty(\Omega)})\ge 1$ such that if $s=|\zeta_j(s)|\ge C$ then
\begin{equation}\label{L2norm R-s}
	\|R_j\|_{L^2(\Omega)}
	\le
	\frac{C}{s}
	\qquad\text{ and }\qquad
	\|R_j\|_{L^2(\partial B)}
	\le
	\frac{C}{s^{1-\theta}},
\end{equation}
for $j=1,2$, where $R_j$ corresponds to the choice $\zeta=\zeta_j$ in \eqref{eq:zetaR}. We will now use CGO solutions to prove injectivity of the Fr\'echet derivative of the DN map.

\subsubsection{Injectivity of the Fr\'echet derivative}

\begin{lemma}\label{lem:DN'inj}
Take $q_V\in\widetilde M_N$ and $D\in \R^{(d+2)\times N}$. If
\begin{equation*}
	\prodin{d\Lambda_{q_V}(D)(\phi)}{\psi}
	=
	0,
	\qquad \phi,\psi\in H^{1/2}(\partial\Omega),
\end{equation*}
then $D=0$. As a consequence, $d\Lambda_{q_V}$ is injective.
\end{lemma}

\begin{proof}
Using the notation \eqref{eq:VD}, by \eqref{dF} we have that 
\begin{equation}\label{eq:step1}
	\sum_{k=1}^N\bigg\{
	t_k\int_{B(a_k,r_k)}uv\ dx
	+
	\lambda_k\int_{\partial B(a_k,r_k)}\Big(h_k\cdot\frac{x-a_k}{r_k}+\rho_k\Big)uv\ d\sigma(x)
	\bigg\}=0
\end{equation}
for every  $u,v\in H^1(\Omega)$  solutions of
\begin{equation}\label{eq:global}
	-\Delta u+(\beta+q_V)u=0\quad
	\text{ in } \Omega,
	\qquad
	-\Delta v+(\beta+q_V)v=0\quad
	\text{ in } \Omega,
\end{equation}
where
\begin{equation*}	
q_{V}
	=
	\sum_{k=1}^N \lambda_k\chi_{B(a_k,r_k)}
.
\end{equation*}
We need to show that $h_k=0$ and $t_k=\rho_k=0$ for every $k=1,\dots,N$. Let us fix any $k_0\in \{1,\dots,N\}$. We claim that $h_{k_0}=0$ and $t_{k_0}=\rho_{k_0}=0$.

Let $\varepsilon>0$ be such that $B(a_k,r_k+\varepsilon)\subseteq \Omega$ for every $k$ and
\[
B(a_k,r_k+\varepsilon)\cap B(a_j,r_j+\varepsilon)=\emptyset,\qquad j\neq k.
\]
Set
\[
\Omega_1=\bigcup_{k=1}^N B(a_k,r_k),\qquad \Omega_2=\bigcup_{k=1}^N B(a_k,r_k+\varepsilon).
\]

We now consider functions of the form
\begin{equation}\label{eq:utilde}
\tilde u(x)=
\begin{cases}
e^{i\zeta_1\cdot x}(1+R_1(x)) & \text{if $x\in B(a_{k_0},r_{k_0}+\varepsilon)$,}\\
0 & \text{otherwise,}
\end{cases}
\end{equation}
and
\begin{equation}\label{eq:vtilde}
\tilde v(x)=
\begin{cases}
e^{i\zeta_2\cdot x}(1+R_2(x)) & \text{if $x\in B(a_{k_0},r_{k_0}+\varepsilon)$,}\\
0 & \text{otherwise,}
\end{cases}
\end{equation}
where $\zeta_1$ and $\zeta_2$ are certain vectors in $\mathbb{C}^d$ as in \Cref{subsub:CGO} such that $\zeta_1\cdot\zeta_1=\zeta_2\cdot\zeta_2=0$ and $R_1$ and $R_2$ are chosen so that $u(x)=e^{i\zeta_1\cdot x}(1+R_1(x))$ and $v(x)=e^{i\zeta_2\cdot x}(1+R_2(x))$ satisfy \eqref{eq:global}, and in particular 
\begin{equation*}
	-\Delta \tilde u+(\beta+q_V)\tilde u=0\quad
	\text{ in } \Omega_2,
	\qquad
	-\Delta \tilde v+(\beta+q_V)\tilde v=0\quad
	\text{ in } \Omega_2.
\end{equation*}

We now approximate these local solutions $\tilde u$ and $\tilde v$ by global solutions of \eqref{eq:global} by using the Runge approximation property \cite{LAX-1956,MALGRANGE-1955-56}. More precisely, thanks to the estimates given in \cite[Lemma~4.8]{BAL-UHLMANN-2013} (see also \cite[Corollary~7.9]{alberti-capdeboscq-2018}), there exist $u_n,v_n\in H^1(\Omega)$ such that
\begin{equation}\label{eq:globaln}
	-\Delta u_n+(\beta+q_V)u_n=0\quad
	\text{ in } \Omega,
	\qquad
	-\Delta v_n+(\beta+q_V)v_n=0\quad
	\text{ in } \Omega,
\end{equation}
and
\begin{equation}\label{eq:limitn}
\|u_n-\tilde u\|_{C^0(\overline{\Omega_1})}\to 0,\qquad \|v_n-\tilde v\|_{C^0(\overline{\Omega_1})}\to 0.
\end{equation}

By \eqref{eq:step1} and \eqref{eq:globaln} we have 
\begin{equation*}%\label{eq:step1}
	\sum_{k=1}^N\bigg\{
	t_k\int_{B(a_k,r_k)}u_nv_n\ dx
	+
	\lambda_k\int_{\partial B(a_k,r_k)}\Big(h_k\cdot\frac{x-a_k}{r_k}+\rho_k\Big)u_nv_n\ d\sigma(x)
	\bigg\}=0.
\end{equation*}
Taking the limit as $n\to+\infty$, by using \eqref{eq:limitn} we obtain
\begin{equation*}%\label{eq:step1}
	\sum_{k=1}^N\bigg\{
	t_k\int_{B(a_k,r_k)}\tilde u \tilde v\ dx
	+
	\lambda_k\int_{\partial B(a_k,r_k)}\Big(h_k\cdot\frac{x-a_k}{r_k}+\rho_k\Big)\tilde u \tilde v\ d\sigma(x)
	\bigg\}=0.
\end{equation*}
Next, by  \eqref{eq:utilde} and \eqref{eq:vtilde} we derive
\begin{equation*}%\label{eq:step1}
	\mathcal{G}(uv):=t_{k_0}\int_{B(a_{k_0},r_{k_0})}u  v\ dx
	+
	\lambda_{k_0}\int_{\partial B(a_{k_0},r_{k_0})}\Big(h_{k_0}\cdot\frac{x-a_{k_0}}{r_{k_0}}+\rho_{k_0}\Big) u  v\ d\sigma(x)
=0,
\end{equation*}
where $u(x)=e^{i\zeta_1\cdot x}(1+R_1(x))$ and $v(x)=e^{i\zeta_2\cdot x}(1+R_2(x))$ for each $x\in\Omega$.

In what follows we denote $\Psi=R_1+R_2+R_1R_2$, so that $(uv)(x)=(1+\Psi(x))e^{i\xi\cdot x}$. 
By the linearity of $\mathcal{G}$, after a rearrangement of the terms we get $\mathcal{G}(e^{i\xi\cdot(\cdot)})=-\mathcal{G}(\Psi e^{i\xi\cdot(\cdot)})$. Thus, taking taking absolute values we obtain
\begin{equation}\label{ineq:1}
	\big|\mathcal{G}(e^{i\xi\cdot(\cdot)})\big|
	=
	\big|\mathcal{G}(\Psi e^{i\xi\cdot(\cdot)})\big|
	\le
	 c
	\left(\int_{B}|\Psi|\ dx
	+
	\int_{\partial B}|\Psi|\ d\sigma(x)
	\right),
\end{equation}
where $B=B(a_{k_0},r_{k_0})$ and $c=|t_{k_0}|+|\lambda_{k_0}|(|h_{k_0}|+|\rho_{k_0}|)$.
Next we show that the left-hand side of this inequality is equal to zero by choosing a large enough value of the parameter $s$ in \eqref{eq:zeta}. To do that, let us start by  estimating the first integral in the right-hand side using \eqref{L2norm R-s}:
\begin{equation*}
\begin{split}
	\int_{B}|\Psi|\ dx
	\le
	~&
	\int_{B}|R_1|+|R_2|+|R_1R_2|\ dx
	\\
	\le
	~&
	C(\|R_1\|_{L^2(B)}+\|R_2\|_{L^2(B)})
	+\|R_1\|_{L^2(B)}\|R_2\|_{L^2(B)}
	\\
	\le
	~&
	C\Big(\frac{1}{|\zeta_1(s)|}+\frac{1}{|\zeta_2(s)|}+\frac{1}{|\zeta_1(s)|\,|\zeta_2(s)|}\Big)
	\\
	=
	~&
	C\Big(1+\frac{1}{s}\Big)\frac{1}{s}
	\\
	\le
	~&
	\frac{C}{s},
\end{split}
\end{equation*}
while for the other integral,
\begin{equation*}
\begin{split}
	\int_{\partial B}|\Psi|\ d\sigma(x)
	\le
	~&
	\int_{\partial B}|R_1|+|R_2|+|R_1R_2|\ d\sigma(x)
	\\
	\le
	~&
	C(\|R_1\|_{L^2(\partial B)}+\|R_2\|_{L^2(\partial B)})
	+\|R_1\|_{L^2(\partial B)}\|R_2\|_{L^2(\partial B)}
	\\
	\le
	~&
	C\Big(\frac{1}{|\zeta_1(s)|^{1-\theta}}
	+
	\frac{1}{|\zeta_2(s)|^{1-\theta}}
	+
	\frac{1}{|\zeta_1(s)|^{1-\theta}|\zeta_2(s)|^{1-\theta}}\Big)
	\\
	=
	~&
	C\Big(1+\frac{1}{s^{1-\theta}}\Big)\frac{1}{s^{1-\theta}}
	\\
	\le
	~&
	\frac{C}{s^{1-\theta}}.
\end{split}
\end{equation*}
Therefore, replacing in \eqref{ineq:1} we get that
\begin{equation*}
	\big|\mathcal{G}(e^{i\xi\cdot(\cdot)})\big|
	\le
	C\Big(\frac{1}{s}+\frac{1}{s^{1-\theta}}\Big)
	\le
	\frac{C}{s^{1-\theta}}.
\end{equation*}
which holds for every large enough $s\ge C\ge 1$. As a consequence, since $\theta<1$,
\begin{equation*}
	\mathcal{G}(e^{i\xi\cdot(\cdot)})
	=
	0,
\end{equation*}
which is equivalent to
\begin{equation*}
	t_{k_0}\int_{B(0,1)}r_{k_0}e^{ir_{k_0}\xi\cdot x}\ dx
	+
	\lambda_{k_0}\int_{\S^{d-1}}(h_{k_0}\cdot x+\rho_{k_0})e^{ir_{k_0}\xi\cdot x}\ d\sigma(x)
	=
	0,
\end{equation*}
where $\S^{d-1}=\partial B(0,1)$ denotes the $(d-1)$-dimensional unit sphere of $\R^d$.

The proof will be an immediate consequence of the following lemma.
\end{proof}

%In the following lemma we conclude the proof of the injectivity in the case $N=1$.

\begin{lemma}
Let $(a,r,\lambda)\in\P_1$ and $(h,t,r)\in\R^{d+2}$. If
\begin{equation}\label{eq:1}
	t\int_{B(0,1)}re^{ir\xi\cdot x}\ dx
	+
	\lambda\int_{\S^{d-1}}(h\cdot x+\rho)e^{ir\xi\cdot x}\ d\sigma(x)
	=
	0,
\end{equation}
holds for every $\xi\in\R^d$, then $(h,t,r)=0$.
\end{lemma}

\begin{proof}
Let us focus on the first integral in \eqref{eq:1}. Using that
\begin{equation*}
	re^{ir\xi\cdot x}
	=
	-\frac{i}{|\xi|^2}\,\xi\cdot\nabla(e^{ir\xi\cdot x}),
\end{equation*}
together with the divergence theorem we get
\begin{equation*}
	\int_{B(0,1)}re^{ir\xi\cdot x}\ dx
	=
	-\frac{i}{|\xi|^2}\int_{\S^{d-1}}\xi\cdot x\, e^{ir\xi\cdot x}\ d\sigma(x),
\end{equation*}
and replacing in \eqref{eq:1},
\begin{equation}\label{eq:2}
\begin{split}
	0
	=
	~&
	\int_{\S^{d-1}}\bigg(\lambda(h\cdot x+\rho)-\frac{it}{|\xi|^2}\,\xi\cdot x\bigg)e^{ir\xi\cdot x}\ d\sigma(x)
	\\
	=
	~&
	\lambda\rho\int_{\S^{d-1}}e^{ir\xi\cdot x}\ d\sigma(x)
	+
	\bigg(\lambda h-\frac{it}{|\xi|^2}\,\xi\bigg)\cdot\int_{\S^{d-1}}xe^{ir\xi\cdot x}\ d\sigma(x).
\end{split}
\end{equation}
We study each integral separately:
\begin{equation*}
\begin{split}
	\int_{\S^{d-1}}e^{ir\xi\cdot x}\ d\sigma(x)
	=
	\int_{\S^{d-1}}e^{ir|\xi|x_1} d\sigma(x)
	=
	\int_{\S^{d-1}}\cos(r|\xi|x_1)\ d\sigma(x),
%	=\,:
%	I_1(|\xi|),
\end{split}
\end{equation*}
and
\begin{equation*}
\begin{split}
	\int_{\S^{d-1}}xe^{ir\xi\cdot x}\ d\sigma(x)
	=
	~&
	\frac{\xi}{|\xi|}\int_{\S^{d-1}}x_1e^{ir|\xi|x_1}\ d\sigma(x)
	\\
	=
	~&
	i\,\frac{\xi}{|\xi|}\int_{\S^{d-1}}x_1\sin(r|\xi|x_1)\ d\sigma(x),
%	=\,:
%	i\,\frac{\xi}{|\xi|}\,I_2(|\xi|),
\end{split}
\end{equation*}
where we have used the symmetry of the unit sphere $\S^{d-1}$ to discard the odd terms.
If we define $I_1,I_2:\R\to\R$ by
\begin{equation*}
\begin{split}
	&
	I_1(\tau)
	:\,=
	\int_{\S^{d-1}}\cos(r\tau x_1)\ d\sigma(x),
	\\
	&
	I_2(\tau)
	:\,=
	\int_{\S^{d-1}}x_1\sin(r\tau x_1)\ d\sigma(x),
\end{split}
\end{equation*}
then \eqref{eq:2} reads as follows,
\begin{equation*}
	\lambda\rho I_1(|\xi|)+t\frac{I_2(|\xi|)}{|\xi|}
	+
	i\lambda h\cdot\xi\, \frac{I_2(|\xi|)}{|\xi|}
	=
	0.
\end{equation*}
Equivalently, 
\begin{equation*}
	\begin{cases}
		\lambda\rho|\xi| I_1(|\xi|)+tI_2(|\xi|)=0,
		\\
		\lambda h\cdot\xi\, I_2(|\xi|)=0.
	\end{cases}
\end{equation*}
Hence if we choose $\frac{\pi}{4r}\le|\xi|\le\frac{\pi}{2r}$, both $I_1(|\xi|)$ and $I_2(|\xi|)$ are positive real numbers, and from the second identity we immediately deduce that $h=0$.

Now we show that this also implies that $\rho=t=0$. We start by assuming that $t\neq 0$, since otherwise $\rho=0$. Note that $I_1' = -r I_2$. Inserting this into the first identity we get
\begin{equation*}
	\lambda\rho|\xi| I_1(|\xi|)-\frac{t}{r}I_1'(|\xi|)
	=
	0.
\end{equation*}
Solving the differential equation with initial value $I_1(0)=\sigma(\S^{d-1})=\frac{2\pi^{d/2}}{\Gamma(d/2)}$ we obtain that
\begin{equation*}
	I_1(|\xi|)
	=
	\frac{2\pi^{d/2}}{\Gamma(d/2)}\exp\Big\{\frac{\lambda\rho r}{2t}|\xi|^2\Big\}.
\end{equation*}
Then the contradiction follows by recalling \cite[Appendix B.4]{grafakos-2014}
\[
I_1(|\xi|) = \frac{(2 \pi)^{d/2}}{(r|\xi|)^{\frac{d-2}{2}}} J_{\frac{d-2}{2}}(r|\xi|),
\]
where $J_{\frac{d-2}{2}}(z)$ denotes the Bessel function of the first kind of order $\frac{d-2}{2}$. Indeed this implies that
\begin{equation*}
	J_{\frac{d-2}{2}}(z)
	=
	c_1z^{\frac{d-2}{2}}e^{c_2z^2}
\end{equation*}
for some constants $c_1,c_2\neq 0$, which yields the desired contradiction. Thus $(h,\rho,t)=0$.
\end{proof}

\subsubsection{Proof of \Cref{thm:gelf}.} We are now ready to prove \Cref{thm:gelf}.

\begin{proof}[Proof of \Cref{thm:gelf}]
We need to verify the assumptions of Theorem~\ref{QNF-stability-multiple}.

 \[
 \varrho_0=\varrho_{\rm min}/2,\qquad \varrho_1=\varrho_{\rm max}+\delta/3,\qquad A=a_{\rm max}+\delta/3,
 \]
where $ \delta = d(\overline{B(0,a_{\rm max}+\varrho_{\rm max})},\partial\Omega)>0$, so that $B(0,A+\varrho_1)\subseteq\Omega$, $\varrho_0<\varrho_{\rm min}$ and $\varrho_1>\varrho_{\rm max}$. Let 
\[M =
	\bigcup_{p=1}^{N_B} \widetilde M_p.
\]
 From \Cref{lem:MNopen}  we have that $\widetilde M_p$, $p=1,\dots,N_B$, are $p(d+2)$-dimensional Lipschitz manifolds in $L^1(\Omega)$. Furthermore, by \Cref{lem:manifdisj}, they are pairwise disjoint.

The set $K$ from the statement can be decomposed as $K = \cup_{p=1}^{N_B} K_p$ with $K_p \subseteq\widetilde M_p$, where each $K_p$ is compact thanks to the condition
\[
\|(-\Delta +\beta+ q)^{-1}\|_{\L(H^1_0(\Omega),H^{-1}(\Omega))}\le D,
\]
and the bounds on the parameters $a_k,\lambda_k, r_k$, $k=1,\dots,p$.

The Dirichlet-to-Neumann map $q \mapsto \Lambda_q$ is shown to be of class $C^1$ in Lemmas~\ref{lem:DNdif} and \ref{lem:DNC1}, while the Frech\'et derivative is injective thanks to Lemma~\ref{lem:DN'inj}. It is a classical result that the the Dirichlet-to-Neumann map uniquely determines a $L^\infty(\Omega)$ potential in dimension $d \ge 3$ (see, for instance \cite{hahner1996}), so this yields immediately the injectivity of the map $q \mapsto \Lambda_q$.

Let $\tilde Y = \{ y \in Y: y \text{ is a compact operator}\}$. Then it is well known \cite{mandache2001} that $\Lambda_{q_2}-\Lambda_{q_1} \in \tilde Y$ for every $q_1,q_2 \in M$, because $q_2 (x)= q_1 (x)$ for $x$ close to $\partial \Omega$ by construction. Moreover $\tilde Y$ is closed, therefore $\ran (d\Lambda_q) \subseteq \tilde Y$ for every $q \in M$, as shown in Remarks~\ref{rem:frechet} and \ref{ref:deriv_compact}.

The assumptions of Theorem~\ref{QNF-stability-multiple} are now verified, and this yields the desired Lipschitz stability estimate of \Cref{thm:gelf}.
\end{proof}

\section{Conclusions}\label{sec:conclusion}
In this work, we showed that the ill-posedness of inverse problems may be mitigated by assuming a priori that the unknown quantity belongs to a known low-dimensional manifold. This is a realistic assumption in many applied scenarios and is a standard setup in machine learning. The hypothesis $x\in M$ may be viewed as a prior and regularises the inverse problem, yielding stability. More precisely, 
H\"older and Lipschitz stability results were obtained with infinite-dimensional and finite-dimensional measurements. An extension to the case where $x \notin M$ was also presented. A globally-convergent reconstruction algorithm was designed. The theory was applied to several toy examples as well as to two inverse boundary value problems, for which new Lipschitz stability results with finite measurements were derived.

The abstract approach developed in this paper provides a solid foundation for the use of low-dimensional manifolds as a priori assumptions in inverse problems. However, the results are preliminary and many interesting issues and questions remain open. We outline some of them here.

\begin{itemize}
\item We have already mentioned in the introduction that the Lipschitz stability results derived in this work are nothing but the $\mathcal{M}$-RIP property, which is the basic assumption of the approach based on learning developed in \cite{hyun2020deep}. In particular, even if in principle our results are based on the knowledge of $M$, these a priori estimates are useful also in cases when $M$ is unknown and has to be learned from a training set of samples, by using unsupervised learning. It would be interesting to investigate this aspect in more detail, and consider simultaneously the manifold learning problem and the inverse problem. In the context of manifold learning, this problem is related to the works \cite{fefferman2019fitting,2016-fefferman-etal}, in which one looks for a manifold fitting a training set of noisy data.
\item In the last years, it has become very popular to rewrite iterative regularisation schemes for inverse problems as neural networks, a process called \textit{unrolling} (see \cite{2017-adler-oktem,2018-adler-oktem,2019-arridge-etal} and references therein). These networks can then be partially learned by using a training set, with the aim of optimising the recovery with the actual data. This approach has been extended to more general reconstruction algorithms for linear and nonlinear inverse problems \cite{de2019deep,bubba2020deep}. It would be interesting to see whether the reconstruction algorithm of this work may be written as a neural network, and whether supervised learning may be used to learn some of the parameters of the network, like those related to the manifold $M$ which, as mentioned before, may be unknown.
\item The manifolds considered in this work are without boundary and, by definition, of fixed dimension. However, there are situations where boundary points may be of interest, in order to cover, for instance, the case of degenerate polygons, for which the number of parameters vary. Extending the current theory to this generalised setting would enlarge the range of applicability of these results.
\item Another key assumption of the main results is the injectivity of the differential of $F$. However, in some cases, the differential of $F$ may not be injective. It would be interesting to investigate whether a workaround may be found by using higher order  derivatives of $F$, as in certain higher order inverse mapping theorems
\cite{1986-grasse,1989-Frankowska,1990-Frankowska}.
\item The examples discussed in this paper are presented with the main objective of illustrating the results in simplified settings. The applications to other inverse problems, such as inverse scattering problems or inverse boundary value problems for the wave equation, is left for future work.
\item The numerical implementation of the reconstruction algorithm presented in this work for some  applications, for instance in one of those discussed as examples, is a necessary step towards validating the approach and testing its applicability.
\item A very ambitious task would be the study of compressed sensing results in this nonlinear setting \cite{2013-Blumensath}, with signals that are sparse in a nonlinear manifold. In other words, the sparsity of $x\in M$ is measured via the charts, and corresponds to the sparsity of $\varphi_i(x)$ in the parameter space. Ideally, under this assumption, it should be possible to reduce the number of measurements $N$.
\end{itemize}

{\small
\bibliographystyle{abbrv}
\bibliography{manifold}
}

%%%%%%%%%%%%%%%%%%%%%%%%%%%%%%%%%%%%%%%%%%%%%%%%%%%%%%%%%%%%%%%%%%

\appendix

\section{Tangent spaces and differentials}\label{sec:tangent}

In this section we recall some basic notions on the tangent space of a manifold and on the differential of a map between manifolds.

\subsection{The tangent space}
Let $X$ be a Banach space and $M\subseteq X$ be an $n$-dimensional differentiable manifold with atlas $\{(U_i,\varphi_i)\}_{i\in I}$ (\Cref{def:manifold}). For $x\in M$, we define the tangent space $T_xM$ of $M$ at $x$ as the quotient space
\[
T_x M :\,=\{\gamma\colon (-1,1)\to M:\text{$\gamma(0)=x$ and $\varphi_i\circ\gamma$ is differentiable in $0$}\}/\sim,
\]
where the equivalence relation is defined by
\[
\gamma_1 \sim \gamma_2 \iff (\varphi_i\circ\gamma_1)'(0)=(\varphi_i\circ\gamma_2)'(0),
\]
where $i\in I$ is such that $x\in U_i$. The equivalence class of $\gamma$ is denoted by $[\gamma]$. It is worth noting that, due to the differentiability of the transition maps, the definitions of $T_xM$ and of $[\gamma]$ are independent of the chart. The tangent space $T_xM$ inherits a vector space structure thanks to the bijection
\begin{equation}\label{eq:bijection}
T_x M \to \R^n,\qquad [\gamma]\mapsto (\varphi_i\circ\gamma)'(0).
\end{equation}
In this paper, we always identify  the elements of $T_xM$ with vectors $h=(\varphi_i\circ\gamma)'(0)$ in $\R^n$.

When $M$ is embedded in $X$ (\Cref{rem:embedded}), the tangent space $T_x M$ may be viewed as a subspace of $X$. This is achieved by using the identification \eqref{eq:bijection} and the immersion $(\varphi_i^{-1})'(\varphi_i(x))\colon\R^n \to X$ as follows:
\begin{equation}\label{eq:tangent-embedded}
T_x M\to X,\qquad  [\gamma] \mapsto (\varphi_i^{-1})'(\varphi_i(x))h, \qquad h=(\varphi_i\circ\gamma)'(0).
\end{equation}
Note that this map gives the standard interpretation of the tangent space as the collection of tangent vectors, since
\[
\gamma'(0) = (\varphi_i^{-1}\circ\varphi_i\circ\gamma)'(0)=(\varphi_i^{-1})'(\varphi_i(x))h.
\]
This expression also shows that, even though the identification given in \eqref{eq:bijection} depends on the chart, the embedding \eqref{eq:tangent-embedded} is intrinsic to the manifold and is independent of the chart used.

\subsection{Differential}\label{sub:diff}
Let $F\colon M\to  Y$ be a differentiable function (\Cref{F in M diffble}). The differential $dF_x$ of $F$ at $x\in M$ is defined by
\[
dF_x\colon T_x M\to Y,\qquad h \mapsto (F\circ\varphi_i^{-1})'(\varphi_i(x)) h,
\]
where $i\in I$ is such that $x\in U_i$. Here we are looking at $Y$ as an infinite-dimensional manifold modelled on $Y$ itself and identifying the tangent space to $Y$ at $F(x)$ with $Y$.

It is worth observing that the expression of $dF_x$ does depend on $i$, in contrast to the definition of $T_x M$. However, if $j\in I$ is another index for which $x\in U_j$, by the chain rule we have
\[
\begin{split}
(F\circ\varphi_i^{-1})'(\varphi_i(x)) &= (F\circ\varphi_j^{-1}\circ\varphi_j\circ\varphi_i^{-1})'(\varphi_i(x)) \\
&= (F\circ\varphi_j^{-1})'(\varphi_j(x)) (\varphi_j\circ\varphi_i^{-1})'(\varphi_i(x)).
\end{split}
\]
Condition (3) of \Cref{def:manifold} implies that the transition map $\varphi_j\circ\varphi_i^{-1}$ is a diffeomorphism, so that $(\varphi_j\circ\varphi_i^{-1})'(\varphi_i(x))\colon \R^n\to\R^n$ is an invertible linear map, which can be seen as change of variables. Thus, the two maps $(F\circ\varphi_i^{-1})'(\varphi_i(x))$ and $(F\circ\varphi_j^{-1})'(\varphi_j(x))$ coincide up to a change of variables. In particular, the injectivity of $dF_x$ is an intrinsic property, independent of the chart.

When $M$ is embedded in $X$ and $F\in C^1(A,Y)$ for some open set $A\supseteq M$, since $\varphi_i^{-1}$ is differentiable, we can apply the chain rule and obtain
\[
dF_x(h)=(F\circ\varphi_i^{-1})'(\varphi_i(x)) h=F'(x)\circ (\varphi_i^{-1})'(\varphi_i(x)) h.
\]
Thanks to the identification of $T_x M$ as a subspace of $X$ via \eqref{eq:tangent-embedded}, this identity shows that the differential of $F$ at $x$ coincides with the Fr\'echet derivative of $F$ at $x$ restricted to $T_x M$, namely,
\begin{equation}\label{eq:differential}
dF_x = F'(x)|_{T_x M}.
\end{equation}

\section{Estimates for the symmetric difference of two balls}\label{sec:estimates}

In this section we prove some estimates we need in \Cref{sec:exa}.

\begin{lemma}\label{bilipest}
Take $A>0$ and $0<\varrho_0< \varrho_1<\infty$. Then the following inequalities hold for every $a_1,a_2\in \overline{B(0,A)}\subseteq\R^d$ and $r_1,r_2\in[\varrho_0,\varrho_1]$,
\begin{equation}\label{estimate}
	\frac{1}{C}|(a_1,r_1)-(a_2,r_2)|
	\le
	|B(a_1,r_1)\triangle B(a_2,r_2)|
	\le
	C|(a_1,r_1)-(a_2,r_2)|
\end{equation}
for some $C=C(d,A,\varrho_0,\varrho_1)\ge 1$.
\end{lemma}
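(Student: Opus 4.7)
The plan is to prove the upper bound by a direct triangle inequality in $L^1$ and the lower bound by a compactness/contradiction argument which reduces matters to a local computation near the diagonal, handled via polar coordinates.

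For the upper bound I would decompose
$$B(a_1,r_1)\triangle B(a_2,r_2)\subseteq \bigl(B(a_1,r_1)\triangle B(a_1,r_2)\bigr)\cup\bigl(B(a_1,r_2)\triangle B(a_2,r_2)\bigr).$$
The first piece is a concentric spherical annulus of measure $\omega_n|r_1^n-r_2^n|\le n\omega_n R^{n-1}|r_1-r_2|$. The second piece, a symmetric difference of two translates of the same ball of radius $r_2$, is contained in the tubular annulus $\{x:\bigl|\,|x-a_1|-r_2\bigr|\le d\}$ with $d=|a_1-a_2|$, of measure at most $\omega_n\bigl((r_2+d)^n-(r_2-d)_+^n\bigr)\le C(n,R)\,d$. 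Summing these estimates and using $|r_1-r_2|+d\le \sqrt{2}\,|(a_1,r_1)-(a_2,r_2)|$ gives the upper bound.

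For the lower bound I argue by contradiction. Were it to fail, there would be sequences $(a_1^k,r_1^k),(a_2^k,r_2^k)\in\overline{B(0,A)}\times[\rho,R]$ with the ratio $|B(a_1^k,r_1^k)\triangle B(a_2^k,r_2^k)|/|(a_1^k,r_1^k)-(a_2^k,r_2^k)|$ tending to $0$. Extracting a convergent subsequence, let $(a_i^k,r_i^k)\to(a_i^*,r_i^*)$. If $(a_1^*,r_1^*)\ne(a_2^*,r_2^*)$, then the upper bound already established shows that $(a,r)\mapsto\chi_{B(a,r)}$ is $L^1$-continuous, so the numerator converges to the positive quantity $|B(a_1^*,r_1^*)\triangle B(a_2^*,r_2^*)|$ while the denominator converges to a positive limit, a contradiction.

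The remaining case, where the limits coincide at a common point $(a^*,r^*)$, is the main obstacle. Here I reduce to the local estimate
$$|B(a^*,r^*)\triangle B(a^*+h,r^*+s)|\ge c(n,\rho)\,(|h|+|s|),\qquad |h|,|s|\text{ small.}$$
Passing to polar coordinates centered at $a^*$, every ray in direction $\theta\in\S^{n-1}$ exits $B(a^*+h,r^*+s)$ at the radius $g(\theta)=\theta\cdot h+\sqrt{(r^*+s)^2-|h|^2+(\theta\cdot h)^2}$, so
$$|B(a^*,r^*)\triangle B(a^*+h,r^*+s)|=\frac{1}{n}\int_{\S^{n-1}}|g(\theta)^n-(r^*)^n|\,d\sigma(\theta).$$
A Taylor expansion yields $g(\theta)-r^*=s+\theta\cdot h+O(|h|^2+s^2)$, so the integral equals $(r^*)^{n-1}\int_{\S^{n-1}}|s+\theta\cdot h|\,d\sigma(\theta)+o(|h|+|s|)$. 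The leading-order term defines a norm on $\R\times\R^n$ (it vanishes only if the affine function $\theta\mapsto s+\theta\cdot h$ is identically zero on $\S^{n-1}$, forcing $s=0$ and $h=0$), and hence is equivalent to $|s|+|h|$ with a constant depending only on $n$ and $r^*\ge\rho$. This forces the ratio to stay bounded away from $0$ along any subsequence collapsing to a common point, contradicting the assumption and completing the proof.
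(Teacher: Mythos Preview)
Your proof is correct and takes a genuinely different route from the paper's.

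For the upper bound, you use a clean $L^1$ triangle-inequality decomposition (first change the radius, then translate), whereas the paper argues by a three-case geometric analysis (disjoint, nested, overlapping balls) and in the overlapping case constructs the smallest ball containing $B_1\cup B_2$ and the largest ball inside $B_1\cap B_2$. Your argument is shorter. One small slip: when $d=|a_1-a_2|>r_2$ the tubular annulus has measure $\omega_n(r_2+d)^n$, which is not bounded by $C(n,R)\,d$ without using $d\le 2A$; either let the constant depend on $A$ (permitted by the statement), or in that regime simply bound the symmetric difference by $2\omega_n r_2^n\le 2\omega_n R^{n-1}d$, since $r_2<d$.

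For the lower bound, the paper is fully constructive: it inscribes explicit balls, annuli, and (via Steiner symmetrization) a cylinder inside $B_1\triangle B_2$, obtaining explicit constants in each case. Your compactness-plus-local-expansion approach is more conceptual and shorter, at the price of a non-explicit constant. One point needs sharpening: in the collapsing case both sequences converge to $(a^*,r^*)$, but neither coincides with it, so the local estimate you state with one ball frozen at $(a^*,r^*)$ does not apply as written. The fix is immediate: by translation invariance reduce to $|B(0,r_1^k)\triangle B(h_k,r_1^k+s_k)|$ with $h_k=a_2^k-a_1^k$ and $s_k=r_2^k-r_1^k$, and run your polar-coordinate computation with base radius $r_1^k$. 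The remainder $O((|h|^2+|s|^2)/r)$ and the factor $r^{n-1}$ are uniformly controlled for $r_1^k\in[\rho,R]$, so the leading term $(r_1^k)^{n-1}\int_{\S^{n-1}}|s_k+\theta\cdot h_k|\,d\sigma(\theta)$ still dominates and gives the uniform lower bound $c(n,\rho)(|h_k|+|s_k|)$ you need.
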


\begin{proof}
Observe that since $(a,r)\mapsto|a|+|r|$ defines a norm in $\R^{d+1}$, then
\begin{equation*}
	|(a,r)|
	\asymp
	|a|+|r|
\end{equation*}
and thus \eqref{estimate} is equivalent to
\begin{equation*}
	|B(a_1,r_1)\triangle B(a_2,r_2)|
	\asymp
	|a_1-a_2|+|r_1-r_2|.
\end{equation*}

For simplicity, fixed $(a_1,r_1)$  and $(a_2,r_2)$, we will denote $B_1=B(a_1,r_1)$ and $B_2=B(a_2,r_2)$. We split the proof of \eqref{estimate} in three different cases depending on the values of $|a_1-a_2|$, $|r_1-r_2|$ and $r_1+r_2$. We set $K=\overline{B(0,A)}\times [\varrho_0,\varrho_1]$.

\subsection*{Case $|a_1-a_2|\ge r_1+r_2$}

In this case $B_1\cap B_2=\emptyset$ and
\begin{equation*}
\begin{split}
	|B_1\triangle B_2|
	=
	~&
	|B_1|+|B_2|
	=
	\omega_d(r_1^d+r_2^d)
	\\
	=
	~&
	\omega_d\left[\left(\frac{r_1}{r_1+r_2}\right)^d+\left(\frac{r_2}{r_1+r_2}\right)^d\right](r_1+r_2)^d,
\end{split}
\end{equation*}
where  we denote the Lebesgue measure of the unit ball of $\R^d$ by $\omega_d$.
By the convexity of $t\mapsto t^d$ and the fact that $\frac{r_1}{r_1+r_2}<1$ we get that
\begin{equation*}
	2^{1-d}\omega_d(r_1+r_2)^d
	\le
	|B_1\triangle B_2|
	\le
	\omega_d(r_1+r_2)^d.
\end{equation*}
Moreover, since $r_1+r_2\le|a_1-a_2|$ by assumption and $|a_1-a_2|\le|a_1-a_2|+|r_1-r_2|\le c|(a_1,r_1)-(a_2,r_2)|\le c\diam K$ for some fixed constant $c$, where the diameter is computed with respect to the standard norm,
\begin{equation*}
	\frac{2^{1-d}\omega_d(r_1+r_2)^d}{c\diam K}|a_1-a_2|
	\le
	|B_1\triangle B_2|
	\le
	\omega_d(r_1+r_2)^{d-1}|a_1-a_2|.
\end{equation*}
On the other hand, using $|a_1-a_2|\ge r_1+r_2\ge |r_1-r_2|\ge 0$ we obtain the estimate
\begin{equation*}
	\frac{\omega_d(r_1+r_2)^d}{2^dc\diam K}\big(|a_1-a_2|+|r_1-r_2|\big)
	\le
	|B_1\triangle B_2|
	\le
	\omega_d(r_1+r_2)^{d-1}\big(|a_1-a_2|+|r_1-r_2|\big).
\end{equation*}
Finally, recalling that $r_1,r_2\in[\varrho_0,\varrho_1]$, we get
\begin{equation*}
	\frac{\omega_d\varrho_0^d}{c\diam K}\big(|a_1-a_2|+|r_1-r_2|\big)
	\le
	|B_1\triangle B_2|
	\le
	2^{d-1}\omega_dR^{d-1}\big(|a_1-a_2|+|r_1-r_2|\big),
\end{equation*}
so \eqref{estimate} follows.

\subsection*{Case $|a_1-a_2|<|r_1-r_2|$}

In this case, either
\begin{equation*}
	|a_1-a_2|+r_2<r_1
	\quad\Rightarrow\quad
	B_2\subseteq B_1
	\qquad \text{ or } \qquad
	|a_1-a_2|+r_1<r_2
	\quad\Rightarrow\quad
	B_1\subseteq B_2.
\end{equation*}
In any case, by the mean value theorem
\begin{equation*}
	|B_1\triangle B_2|
	=
	\big||B_1|-|B_2|\big|
	=
	\omega_d|r_1^d-r_2^d|
	=
	d\omega_d\xi^{d-1}|r_1-r_2|
\end{equation*}
for some $\xi\in(r_1,r_2)$. Thus, recalling that $r_1,r_2\in[\varrho_0,\varrho_1]$,
\begin{equation*}
	d\omega_d\varrho_0^{d-1}|r_1-r_2|
	\le
	|B_1\triangle B_2|
	\le
	d\omega_dR^{d-1}|r_1-r_2|.
\end{equation*}
Since $0\le|a_1-a_2|<|r_1-r_2|$ by assumption,
\begin{equation*}
	\frac{d}{2}\omega_d\varrho_0^{d-1}\big(|a_1-a_2|+|r_1-r_2|\big)
	\le
	|B_1\triangle B_2|
	\le
	d\omega_dR^{d-1}\big(|a_1-a_2|+|r_1-r_2|\big),
\end{equation*}
which implies \eqref{estimate}.

\subsection*{Case $|r_1-r_2|\le|a_1-a_2|<r_1+r_2$}\label{sub:B3}
In this case $B_1\cup B_2$ can be decomposed as the union of three nonempty disjoint sets: $B_1\cap B_2$, $B_1\setminus B_2$ and $B_2\setminus B_1$.

We first prove the second inequality of \eqref{estimate} by constructing two balls, one contained inside the other, such that their symmetric difference contains $B_1\triangle B_2$.

Let $B(z_1,s_1)$ and $B(z_2,s_2)$ be the smallest ball containing $B_1\cup B_2$ and the largest ball contained in $B_1\cap B_2$, respectively (see \Cref{fig:1}). Then $2s_1=\diam(B_1\cup B_2)=r_1+|a_1-a_2|+r_2$. On the other hand, $\diam(B_1\cup B_2)=(r_1+|a_1-a_2|-r_2)+2s_2+(r_2+|a_1-a_2|-r_1)=2(s_2+|a_1-a_2|)$, so $2s_2=\diam(B_1\cup B_2)-2|a_1-a_2|=r_1+r_2-|a_1-a_2|$. Therefore
\begin{equation*}
	B_1\triangle B_2
	\subseteq
	B\Big(z_1,\frac{r_1+r_2+|a_1-a_2|}{2}\Big)\setminus B\Big(z_2,\frac{r_1+r_2-|a_1-a_2|}{2}\Big).
\end{equation*}
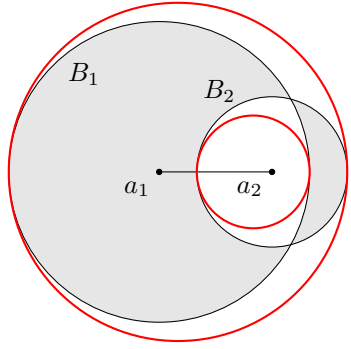
\begin{figure}
\begin{tikzpicture}[scale=1]
\fill[even odd rule,gray!20!white] (0,0) circle (2) (1.5,0) circle (1);
\draw (0,0)--(1.5,0);
\draw (0,0) circle (2);
\draw (1.5,0) circle (1);
\draw [thick,red](1.25,0) circle (0.75);
\draw [thick,red](0.25,0) circle (2.25);
\draw (-1,1.3) node {$B_1$};
\draw (0.8,0.8) node [above] {$B_2$};
\filldraw (0,0) circle (1pt) node [below left]{$a_1$};
\filldraw (1.5,0) circle (1pt) node [below left]{$a_2$};
\end{tikzpicture}
\caption{The shaded region corresponds to $B_1\triangle B_2$ and it is contained between the two red balls.}
\label{fig:1}
\end{figure}
Using this inclusion we can estimate
\begin{equation*}
	|B_1\triangle B_2|
	\le
	\frac{\omega_d}{2^d}\big[(r_1+r_2+|a_1-a_2|)^d-(r_1+r_2-|a_1-a_2|)^d\big].
\end{equation*}
Observe that, given $0<h<t$, by the mean value theorem there exists $\xi\in[t-h,t+h]$ such that $(t+h)^d-(t-h)^d=2d\xi^{d-1}h$, then we can estimate $(t+h)^d-(t-h)^d\le 2d(t+h)^{d-1}h$. Replacing this with $t=r_1+r_2$ and $h=|a_1-a_2|$ in the inequality above we obtain that
\begin{equation*}
\begin{split}
	|B_1\triangle B_2|
	\le
	~&
	\frac{d\omega_d}{2^{d-1}}(r_1+r_2+|a_1-a_2|)^{d-1}|a_1-a_2|
	\\
	\le
	~&
	2^{d-1}d\omega_dR^{d-1}|a_1-a_2|
	\\
	\le
	~&
	2^{d-1}d\omega_dR^{d-1}\big(|a_1-a_2|+|r_1-r_2|\big),
\end{split}
\end{equation*}
where in the second inequality we have used that $r_1+r_2+|a_1-a_2|<2(r_1+r_2)\leq4R$.

To prove the first inequality in \eqref{estimate} we need to distinguish two cases:

\subsection*{ Case $|a_1-a_2|>\max\{r_1,r_2\}$}
Let us consider the disjoint balls
\begin{equation*}
	B\Big(z_3,\frac{|a_1-a_2|+r_1-r_2}{2}\Big)
	\subseteq
	B_1\setminus B_2
	\quad\text{ and }\quad
	B\Big(z_4,\frac{|a_1-a_2|+r_2-r_1}{2}\Big)
	\subseteq
	B_2\setminus B_1,
\end{equation*}
(see \Cref{fig:2}).
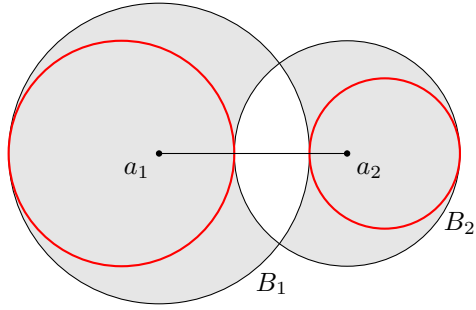
\begin{figure}
\begin{tikzpicture}[scale=1]
\fill[even odd rule,gray!20!white] (0,0) circle (2) (2.5,0) circle (1.5);
\draw (0,0) circle (2);
\draw (2.5,0) circle (1.5);
\draw [thick,red](-0.5,0) circle (1.5);
\draw [thick,red](3,0) circle (1);
\draw (1.5,-2) node [above] {$B_1$};
\draw (4,-1.2) node [above] {$B_2$};
\draw (0,0)--(2.5,0);
\filldraw (0,0) circle (1pt) node [below left]{$a_1$};
\filldraw (2.5,0) circle (1pt) node [below right]{$a_2$};
\end{tikzpicture}
\caption{$B_1\triangle B_2$ contains the two disjoint red balls.}
\label{fig:2}
\end{figure}
Then by the convexity of $t\mapsto t^d$,
\begin{equation*}
\begin{split}
	|B_1\triangle B_2|
	\ge
	~&
	\frac{\omega_d}{2^d}\Big[(|a_1-a_2|+r_1-r_2)^d+(|a_1-a_2|+r_2-r_1)^d\Big]
	\\
	\ge
	~&
	\frac{\omega_d}{2^{d-1}}|a_1-a_2|^d
	\\
	\ge
	~&
	\frac{\omega_d\varrho_0^{d-1}}{2^{d-1}}|a_1-a_2|
	\\
	\ge
	~&
	\frac{\omega_d\varrho_0^{d-1}}{2^d}\big(|a_1-a_2|+|r_1-r_2|\big),
\end{split}
\end{equation*}
where in the third inequality we used the assumption $|a_1-a_2|>\max\{r_1,r_2\}\ge\varrho_0$, and in the last inequality that $|a_1-a_2|\ge|r_1-r_2|$. Then the first inequality in \eqref{estimate} follows.

\subsection*{Case $|a_1-a_2|\le\max\{r_1,r_2\}$}
Let us assume without loss of generality that $r_1\ge r_2$. Then
\begin{equation*}
	B_1\triangle B_2
	\supseteq
	B_1\setminus B_2
	\supseteq
	B(a_1,r_1)\setminus B(a_2,r_1)
	\supseteq
	S,
\end{equation*}
where $S$ is the set enclosed by the red line in \Cref{fig:3}.
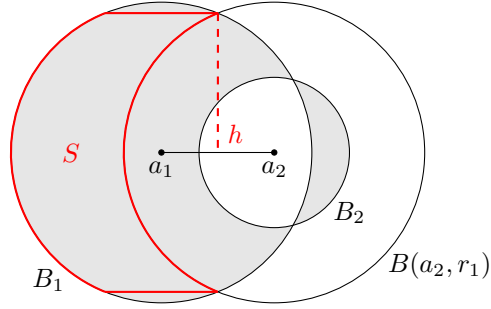
\begin{figure}
\begin{tikzpicture}[scale=1]
\draw [name path=circaux,white] (-1.5,0) circle (2);
\fill[even odd rule,gray!20!white] (0,0) circle (2) (1.5,0) circle (1);
\draw [name path=circ1] (0,0) circle (2);
\draw (1.5,0) circle (1);
\draw [name path=circ2](1.5,0) circle (2);
\draw [name intersections={of=circ1 and circ2}]  
      \foreach \i in {1,2} {(intersection-\i) coordinate (inter-\i)};
\draw [name intersections={of=circ1 and circaux}]  
      \foreach \i in {1,2} {(intersection-\i) coordinate (interaux-\i)};
\draw [thick,red, dashed] (inter-1) -- (0.75,0) node [above right] {$h$}; 
\draw [thick,red] (inter-1) -- (interaux-1);
\draw [thick,red] (inter-2) -- (interaux-2);
\coordinate(Iaux) at ($(interaux-1)$);
\coordinate(Faux) at ($(interaux-2)$);
\coordinate(I) at ($(inter-1)$);
\coordinate(F) at ($(inter-2)$);
\coordinate(C1) at (0, 0);
\coordinate(C2) at (1.5, 0);
\draw pic [draw, angle radius=2cm,thick,red] {angle=Iaux--C1--Faux};
\draw pic [draw, angle radius=2cm,thick,red] {angle=I--C2--F};
\draw (-1.5,-1.7) node{$B_1$};
\draw (2.5,-0.8) node {$B_2$};
\draw (3.7,-1.5) node {$B(a_2,r_1)$};
\draw [red] (-1.2,0) node {$S$};
\draw (0,0)--(1.5,0);
\filldraw (0,0) circle (1pt) node [below]{$a_1$};
\filldraw (1.5,0) circle (1pt) node [below]{$a_2$};
\end{tikzpicture}
\caption{$S\subseteq B_1\triangle B_2$.}
\label{fig:3}
\end{figure}

Observe that the intersection of $S$ with any line parallel to $a_1-a_2$ has length $|a_1-a_2|$. Then, by Steiner symmetrization with respect to the hyperplane orthogonal to $a_1-a_2$, we have that the measure of $S$ is equal to the measure of a cylinder of height $|a_1-a_2|$ and radius
\begin{equation*}
	h
	=
	\sqrt{r_1^2-\frac{|a_1-a_2|^2}{4}}
	\ge
	\frac{\sqrt{3}}{2}\,r_1
	\ge
	\frac{1}{2}\,\varrho_0,
\end{equation*}
where we have used $|a_1-a_2|\le r_1$ and $r_1\ge\varrho_0$. Then we can estimate
\begin{equation*}
\begin{split}
	|B_1\triangle B_2|
	\ge
	|S|
	=
	~&
	\omega_{d-1}h^{d-1}|a_1-a_2|
	\\
	\ge
	~&
	\frac{\omega_{d-1}\varrho_0^{d-1}}{2^{d-1}}|a_1-a_2|
	\\
	\ge
	~&
	\frac{\omega_{d-1}\varrho_0^{d-1}}{2^d}\big(|a_1-a_2|+|r_1-r_2|\big),
\end{split}
\end{equation*}
and the proof of \eqref{estimate} is finished. 
\end{proof}

\begin{remark}
It is noteworthy to mention that all the estimates in the proof of \Cref{bilipest} are valid for balls with respect to other norms (such as, for example, the infinity norm in $\R^d$), except the lower estimate in the case $|r_1-r_2|\le|a_1-a_2|\le\max\{r_1,r_2\}$, where the geometry of the balls has been used. However, it is possible to adapt the idea to obtain the desired estimate for norms different than the usual Euclidean norm.

For instance, if we define $Q(a,r)=\{x\in\R^d\,:\,|x-a|_\infty<r\}$ and we assume that $(a_1,r_1)$ and $(a_2,r_2)$ are points in $\R^d\times\R^+$ such that $0\le r_1-r_2\le|a_1-a_2|_\infty\le r_1$,
then $Q_1\triangle Q_2$ contains a rectangle $S$ of measure $(2r_1)^{d-1}|a_1-a_2|_\infty$ (see \Cref{fig:4}). Then the desired estimate is obtained following the same reasoning.
\begin{figure}
\begin{tikzpicture}[scale=1]
\fill[even odd rule,gray!20!white] (-2,-2)--(2,-2)--(2,2)--(-2,2)--cycle (2.5,-0.5)--(2.5,1.5)--(0.5,1.5)--(0.5,-0.5)--cycle;
\draw (0,0)--(1.5,0.5);
\filldraw (0,0) circle (1pt) node [below]{$a_1$};
\filldraw (1.5,0.5) circle (1pt) node [below]{$a_2$};
\draw (-2,-2)--(2,-2)--(2,2)--(-2,2)--cycle node [left] {$Q_1$};
\draw (2.5,-0.5)--(2.5,1.5)--(0.5,1.5)--(0.5,-0.5)--cycle node [right] {$Q_2$};
\draw (3.5,-1.5)--(3.5,2.5)--(-0.5,2.5)--(-0.5,-1.5)--cycle node [right] {$Q(a_2,r_1)$};
\draw [thick,red](-2,-2)--(-0.5,-2)--(-0.5,2)--(-2,2)--cycle;
\draw [red] (-1.2,0) node {$S$};
\end{tikzpicture}
\caption{$S\subseteq Q_1\triangle Q_2$.}
\label{fig:4}
\end{figure}
\end{remark}

\end{document}